\documentclass[11pt]{article}
\usepackage{mathtools}
\usepackage[T1]{fontenc}
\usepackage{amsfonts}
\usepackage{amsmath}
\usepackage{amssymb}
\usepackage{amsthm}
\usepackage{bbm}
\usepackage{bm}
\usepackage{mathrsfs}
\usepackage{color}
\usepackage{pdfsync}
\usepackage{enumitem}
\usepackage[colorlinks=true, linkcolor=blue, citecolor=blue, urlcolor=blue]{hyperref}

\usepackage{tikz}
\usepackage{subcaption}

\DeclareMathOperator*{\argmax}{argmax}

\DeclareMathOperator*{\diam}{diam}
\DeclareMathOperator{\proj}{proj}

\newcommand{\RR}{\mathbb{R}}
\newcommand{\R}{\RR}

\newcommand{\eps}{\varepsilon}
\newcommand{\cC}{\mathcal{C}}
\newcommand{\cI}{\mathcal{I}}
\newcommand{\cT}{\mathcal{T}}

\newcommand{\cL}{\mathcal{L}}
\newcommand{\cR}{\mathcal{R}}
\newcommand{\cS}{\mathcal{S}}

\newcommand{\Id}{\mathbbm{I}}
\newcommand{\Var}{{\rm Var}}
\newcommand{\op}{{\rm op}}
\newcommand{\Hil}{\mathcal{H}}
\newcommand{\1}{\mathbf{1}}
\newcommand{\DUAL}{{\rm QOT}^{\star}} 
\newcommand{\QOT}{{\rm QOT}}

\newcommand{\proofreadhere}{}

\usepackage[top=30mm,bottom=30mm,left=31mm,right=31mm]{geometry}
\newcommand{\mykill}[1]{}
\usepackage[capitalize, noabbrev]{cleveref}
\crefname{equation}{}{} %

\theoremstyle{plain}
\newtheorem{theorem}{Theorem}[section]
\newtheorem{proposition}[theorem]{Proposition}
\newtheorem{lemma}[theorem]{Lemma}
\newtheorem{corollary}[theorem]{Corollary}
\theoremstyle{definition}

\newtheorem{remark}[theorem]{Remark}

\newtheorem{assumption}[theorem]{Assumption}
\crefname{assumption}{Assumption}{Assumptions}
\Crefname{assumption}{Assumption}{Assumptions}
\theoremstyle{remark}
\AddToHook{env/theorem/begin}{\crefalias{section}{theorem}}
\AddToHook{env/proposition/begin}{\crefalias{theorem}{proposition}}
\AddToHook{env/lemma/begin}{\crefalias{theorem}{lemma}}
\AddToHook{env/corollary/begin}{\crefalias{theorem}{corollary}}
\AddToHook{env/definition/begin}{\crefalias{theorem}{definition}}
\AddToHook{env/remark/begin}{\crefalias{theorem}{remark}}
\AddToHook{env/example/begin}{\crefalias{theorem}{example}}
\AddToHook{env/assumption/begin}{\crefalias{theorem}{assumption}}
\crefname{theorem}{Theorem}{Theorems}
\crefname{proposition}{Proposition}{Propositions}
\crefname{lemma}{Lemma}{Lemmas}
\crefname{corollary}{Corollary}{Corollaries}
\crefname{definition}{Definition}{Definitions}
\crefname{remark}{Remark}{Remarks}
\crefname{example}{Example}{Examples}
\crefname{assumption}{Assumption}{Assumptions}

\renewenvironment{thebibliography}[1]{%
\begin{oldthebibliography}{#1}%
\setlength{\baselineskip}{.9em}
\linespread{1}
\small
\setlength{\parskip}{.40ex}%
\setlength{\itemsep}{.30em}%
}%
{%
\end{oldthebibliography}%
}

\begin{document}

\title{\vspace{-2.5em}Polyak--\L{}ojasiewicz Inequality\\for Quadratically Regularized Optimal Transport}
\date{\today}
\author{  
  Alberto Gonz{\'a}lez-Sanz%
  \thanks{Department of Statistics, Columbia University, ag4855@columbia.edu.} \and  Marcel Nutz%
  \thanks{Departments of Mathematics and Statistics, Columbia University, mnutz@columbia.edu. Research supported by NSF Grants DMS-2106056, DMS-2407074.}   \and Andrés Riveros Valdevenito\thanks{Department of Statistics, Columbia University, ar4151@columbia.edu.}
  }
  
\maketitle
\vspace{-1.5em}
\begin{abstract}
Quadratically regularized optimal transport (QOT) is an alternative to entropic regularization that yields sparse couplings and avoids numerical instabilities due to exponential scaling. From an optimization viewpoint, the dual QOT objective is concave but features a positive part function which prevents strong concavity and reduces smoothness of optimizers. Consequently, standard arguments for linear convergence of algorithms do not apply. In this paper, we nevertheless establish a quantitative curvature property for the QOT dual. Under mild assumptions covering both continuous and semi-discrete transport problems, we prove a local error bound and a Polyak--\L{}ojasiewicz (PL) inequality, with explicit constants depending only on the problem primitives. These results are obtained by functional-analytic techniques exploiting that near the optimum, the argument of the positive part function is positive on the interior of the support of the optimal coupling. As applications, we derive linear convergence of the gradient ascent, coordinate ascent, and coordinate gradient ascent algorithms on the dual problem, with explicit contraction rates.
\end{abstract}

\vspace{1em}

{\small
\noindent \emph{Keywords}
Optimal Transport; Quadratic Regularization; PL Inequality; Linear Convergence

\noindent \emph{AMS 2020 Subject Classification}
49N10;  %
49N05;  %
90C25 %
}

\section{Introduction}

Optimal transport provides a principled framework for comparing probability distributions, with ubiquitous applications from machine learning to economics. Given compactly supported probability measures $P$ and $Q$ on $\R^d$, the optimal transport problem with cost function $c:\R^d\times\R^d \to\R$ is 
\begin{equation}\label{otIntro}
  {\rm OT}(P,Q)=  \inf_{\pi\in \Pi(P,Q)} \int_{\R^d\times\R^d} c(x,y)\,d\pi(x,y),
\end{equation}
where $\Pi(P,Q)$ denotes the set of couplings (or transport plans), i.e., probability measures on $\R^d\times\R^d$ with marginals $(P,Q)$. In particular, the optimal value ${\rm OT}(P,Q)$ induces the $p$-Wasserstein distance when $c(x,y)=\|x-y\|^p$. As~\eqref{otIntro} is computationally and statistically challenging in high dimensions, the dominant approach in applications is to introduce a regularization term. Popularized by~\cite{Cuturi.2013.Neurips}, the most widely used is Kullback--Leibler (KL) divergence, leading to the \emph{entropic optimal transport (EOT)} problem
\begin{equation}\label{EOT}
    {\rm EOT}_{\eps}(P,Q)= \inf_{\pi\in \Pi(P,Q)} \int c(x,y)\,d\pi(x,y)
    +\eps{\rm KL}\big(\pi\vert P\otimes Q\big).
\end{equation}
Here $\eps>0$ is a parameter controlling the strength of the regularization. The associated dual problem is the maximization of
\begin{equation}\label{dual.EOT}
    \int \left(f(x)+ g(y)
    -\eps e^{\frac{f(x)+g(y)- c(x,y)}{\eps}}
    \right) d(P \otimes Q)(x,y)
\end{equation}
over functions $f,g:\R^d\to\R$.
The celebrated Sinkhorn algorithm can be interpreted as the (block) coordinate ascent algorithm on this objective. Two key features underlying the success of EOT and Sinkhorn's method are the strong concavity of the objective~\eqref{dual.EOT} and the fact that its optimizers $(f_*,g_*)$ are automatically as smooth as the cost~$c$. In particular, strong concavity directly yields linear convergence of Sinkhorn's algorithm~\cite{Carlier.2022.SIOPT}. At the same time, entropic regularization introduces structural distortions. The infinite derivative of $t \log t$ at zero entails that the optimal coupling of~\eqref{EOT} necessarily has full support (the same support as $P\otimes Q$), even though~\eqref{otIntro} is typically sparse. This ``overspreading'' induces blurring or bias in applications such as image processing~\cite{blondel18quadratic} or manifold learning~\cite{zhang.2023.manifoldlearningsparseregularised}. Moreover, weak regularization $\eps\ll1$ leads to numerical instability due to exponentially large and small values in the Sinkhorn updates~\cite{Schmitzer.19}. See, e.g., \cite{Peyre.Cuturi.2019.Book,Nutz.2021.LectureNotes} for further background. 

\medskip

The most popular alternative, first considered by \cite{Muzellec.2017.AAAI,blondel18quadratic,EssidSolomon.18}, replaces KL divergence by the squared $L^2$-norm (equivalently, $\chi^2$ divergence), leading to the \emph{quadratically regularized optimal transport (QOT)} problem
\begin{equation}\label{qotIntro}
  \QOT_\eps(P,Q)=  \inf_{\pi\in \Pi(P,Q)} \int c(x,y) d\pi(x,y) +\frac{\eps }{2}\left\| \frac{d \pi}{ d(P\otimes Q)}\right\|^2_{L^{2}(P\otimes Q)}.
\end{equation}
Its optimal value converges to ${\rm OT}(P,Q)$ as $\eps\to0$ at the rate $\eps^{2/(d+2)}$ \cite{Eckstein.Nutz.2023}. Unlike entropic regularization, the optimal coupling is sparse for small to moderate~$\eps$, as has been observed empirically in many works including \cite{blondel18quadratic,EssidSolomon.18, Lorenz.2019,BayraktarEckstein.2025.BJ} and recently established theoretically in~\cite{WieselXu.24,GonzalezSanzNutz2024.Scalar, Nutz.2024}. Moreover, exponentially large or small values do not appear in~QOT. A variety of computational approaches have been developed, including mirror gradient methods~\cite{Muzellec.2017.AAAI}, Newton-type
algorithms~\cite{EssidSolomon.18}, Gauss–Seidel (coordinate ascent) schemes~\cite{blondel18quadratic,Lorenz.2019}, cyclic projections and gradient methods~\cite{lorenz.2019.preprint}, as well as neural network methods parameterizing the dual domain \cite{EcksteinKupper.21, GeneveyEtAl.16, GulrajaniAhmedArjovskyDumoulinCourville.17, LiGenevayYurochkinSolomon.20, seguy2018large}.

From an optimization viewpoint, quadratic regularization leads to a markedly different geometry. The dual problem associated with~\eqref{qotIntro} is the maximization of the concave objective
\begin{align}\label{eq:dualobj-intro}
     \Gamma(f,g) = \int \left(f(x) + g(y) -\frac{1}{2\eps} \big(f(x)+ g(y)- c(x,y)\big)_+^{2} \right) d(  P \otimes   Q )(x,y).
\end{align}
Note that the positive part function $(t)_+=\max(t,0)$ destroys strong (and even strict) concavity: $\Gamma$ is linear in $f\oplus g$ as long as $f\oplus g-c\leq0$, where $(f\oplus g)(x,y):=f(x)+g(y)$. Moreover, optimizers have limited smoothness (not $\cC^{2}$ even if $c(x,y)=\|x-y\|^2$, cf.\ \cite{GonzalezSanzEcksteinNutz.25,Nutz.2024}). Thus, many of the arguments that are used in EOT do not apply. While well established in computational practice, QOT has long been regarded as difficult to study analytically, and it is fair to say that the theory is still in its infancy. However, two recent results suggest that QOT is better behaved than the aforementioned properties suggest: despite the lack of strong concavity and smoothness, \cite{GonzalezSanzDelBarrioNutz.25} established that QOT has parametric sample complexity (i.e., does not suffer from the same statistical curse of dimensionality as unregularized optimal transport), and \cite{GonzalezSanzNutzRiveros.25} showed that the gradient ascent algorithm for~\eqref{eq:dualobj-intro} converges linearly. These results hint that the geometry of the QOT dual may retain some features of strong concavity, at least locally around the optimum. The goal of the present work is to crystallize this intuition into a quantitative result.

Indeed, our main result provides an error bound and a Polyak--\L{}ojasiewicz (PL) inequality for the QOT dual~\eqref{eq:dualobj-intro} under mild assumptions, thus offering the first structural understanding for the optimization and statistical properties of QOT. Denoting by $(f_*,g_*)$ the (essentially unique) dual optimizers, \cref{th:PL-Gamma} states that
$$ \|f\oplus g-f_*\oplus g_*\|_{L^2(P\otimes Q)} \leq \gamma_{\eps}\max(\|f\oplus g-f_*\oplus g_*\|_{\infty} , \eps) \,\|{\rm D}\Gamma(f,g)\|_{L^2(P)\times L^2(Q)}$$
and
\begin{align*}
    \|{\rm D}\Gamma (f,g)\|_{L^{2}(P) \times L^{2}(Q)}^{2} 
    &\geq \frac{1}{\gamma_\eps  \max\left(\|f\oplus g - f_*\oplus g_*\|_\infty,\eps\right) }\big(\QOT_\eps(P,Q)- \Gamma (f,g) \big),%
\end{align*}
where ${\rm D}\Gamma$ denotes the gradient and $\gamma_\eps$ is an explicit constant. This error bound and PL inequality are local in the sense that their constants are uniform only for $\|f\oplus g - f_*\oplus g_*\|_\infty\leq\eps$, where $\eps$ is the (fixed) regularization parameter. Such a localization is clearly necessary, given the aforementioned linear structure of the dual on part of the domain. Our setting assumes that one marginal has compact and convex support (or more generally, compact connected Lipschitz support)  and a density that is bounded from above and below, whereas the second marginal is only assumed to be compactly supported and the transport cost~$c$ is any Lipschitz (or uniformly continuous) function. In particular, our setting covers both continuous and semi-discrete transport problems. 

It is well known that a PL inequality enables a host of desirable properties for optimization. In this paper, our applications focus on the convergence of several algorithms for the dual QOT problem (and hence also for the primal problem~\eqref{qotIntro}, since the dual optimizers yield the optimal coupling via the relation~\eqref{eq:primal-dual} below). Specifically, we show in \cref{coro:GD,coro:Implicit,coro:Explicit} that gradient ascent, coordinate ascent, and coordinate gradient ascent all converge linearly (for suitable step sizes), with explicit contraction constants. Of course, the PL inequality may also be useful for analyzing other algorithms. %
Further important applications, to be developed more fully in forthcoming works, include explicit bounds for the statistical sample complexity of QOT and quantitative stability for the dual (and hence also the primal)  QOT optimizers. Regarding the second application, we mention that first Lipschitz-stability results follow immediately along the lines of \cite[Section~4.4]{BonnansShapiro.00}, 
whereas stronger results follow with additional work (see \cite{GonzalezSanzNutz.26stability}). These implications of the PL inequality offer a marked improvement over the H\"older continuity of optimal couplings shown in \cite[Theorem~3.3]{BayraktarEckstein.2025.BJ}.

The basic intuition for our main result is the following. The dual objective~\eqref{eq:dualobj-intro} would be strongly concave (in $f\oplus g$) if it weren't for the positive part operator. The latter is active only on the complement of the set $\cS=\{(x,y): f(x)+ g(y) \geq c(x,y)\}$. When $(f,g)=(f_*,g_*)$ are dual optimizers, $\cS$ is the support of the optimal coupling, by way of~\eqref{eq:primal-dual}. While we expect the support to be sparse, the fact that it carries the solution also implies certain lower bounds. Thus, if $(f,g)$ is close to $(f_*,g_*)$, we know that there is a nontrivial set where the positive part operator is not active, and that set must be exploited as the source of curvature for our result. However, it is not straightforward to turn this intuition into a formal argument (in part, because we have very limited knowledge about the shape and size of the optimal support). The actual proof follows an original approach leveraging functional-analytic tools: We fix $(f,g)$ and consider the dual objective $\Gamma(f_t,g_t)$ along the interpolation $(f_t,g_t):=t(f,g) + (1-t)(f_*,g_*)$. The main step is to bound the second derivative $\partial_{tt}^2 \Gamma(f_t,g_t)$ from below, which corresponds to the coercivity of an integral operator whose kernel is $\1_{\{f_t(x) + g_t(y) \geq c(x,y)\}}$. For $t=0$, this set is the support of the optimal coupling, linking to the intuition sketched above. Establishing the lower bound boils down to bounding the minimum eigenvalue of an associated operator, which requires us to develop a tailored spectral analysis since the operator is not compact. 

As the PL inequality is local, drawing the corollaries for the algorithms requires uniform bounds for the iterates. These bounds are algorithm-specific, but once they are obtained, the linear convergences are direct consequences of the PL inequality. Of course, this analysis could be extended to variants of the discussed algorithms, for instance variable step sizes. As mentioned, linear convergence of gradient ascent was previously shown in \cite{GonzalezSanzNutzRiveros.25}. However, the contraction constant given there is not explicit (due to the proof containing a qualitative compactness argument). Moreover, the proof is not easily adapted to other algorithms such as coordinate ascent or coordinate gradient ascent since it relies on a symmetry property specific to the gradient ascent algorithm. The derivation through the PL inequality in the present work offers a systematic approach that covers a whole class of algorithms. Apart from~\cite{GonzalezSanzNutzRiveros.25}, we are not aware of previous results on linear convergence for continuous QOT.

\paragraph{Organization} \Cref{se:background} details the assumptions and collects relevant background results. The main result on error bound and PL inequality is \cref{th:PL-Gamma} in \cref{se:PL}, whereas the linear convergences of the algorithms are stated as \cref{coro:GD,coro:Implicit,coro:Explicit} in \cref{Section:linear-convergence}. The proof of the main result is given in \cref{Section-Proof-of-PL}, and the corollaries for the algorithms are derived in  \cref{se:proofs-of-linear}. \Cref{sec:conclusion} concludes with a summary and practical remarks. Appendix~\ref{app:uniformly-continuous-costs} extends the results from Lipschitz to uniformly continuous costs, and Appendix~\ref{app:PL-connected} relaxes the convexity assumption on the first marginal support.

\section{Main Results}\label{se:main}

\subsection{Problem statement and background}\label{se:background}

We first detail our assumptions on the marginals $P,Q$ and the cost $c$. Let $\cL_{d}$ denote the $d$-dimensional Lebesgue measure and $B_r(x)$ the open ball of center $x$ and radius $r>0$. We fix probability measures~$P$ and~$Q$ on~$\RR^{d}$ and impose the following conditions throughout.

\begin{assumption}[Marginals] \label{assumption:marginals} (a) The first marginal~$P$ has convex, compact support~$\Omega$ and admits a density $\rho:=dP/d\cL_{d}$ that is bounded away from zero and infinity on $\Omega$, i.e., there exist constants $0<\lambda_P<\Lambda_P<\infty$ such that 
$$ \lambda_P\leq \rho(x) \leq \Lambda_P \quad \text{for all } x\in \Omega.$$
(b) The second marginal $Q$ has compact support $\Omega'$.
\end{assumption}

The convexity condition can be relaxed to $\Omega$ being the closure of a bounded Lipschitz domain. All our results extend to that setting, at the expense of an additional constant that depends on the geometry of $\Omega$ and a more complicated proof. This extension is detailed in \cref{app:PL-connected}.

\begin{remark}[Lower bounds for ball measures]\label{rk:coneCondition} (a) Since $\Omega$ is compact and convex, it satisfies a uniform interior cone condition: for every $x\in \Omega$, there exists a convex cone $\mathcal{C}_x$  with vertex $x$, angle $\theta>0$ and height $h>0$ such that $\mathcal{C}_x\subset \Omega$, where~$\theta$ and~$h$ are independent of $x$ (e.g., \cite[p.\,12]{Grisvard.85}). Hence,  \cref{assumption:marginals} implies that there exists $\delta_P\in (0,1]$, depending only on~$\Omega$ and~$\lambda_P$, such that
$$ P(B_r(x)) \geq  \delta_P\, \min(r^d, 1) \quad \text{for all $r>0$ and $x\in \Omega$.} $$
(b) Since $\Omega'$ is compact, lower semicontinuity of $y\mapsto Q(B_r(y))$ implies
$$ \inf_{y\in \Omega' }Q(B_r(y))>0 \quad \text{for all $r>0$.} $$
\end{remark}

We note that $Q$ can be continuous or discrete, that is, our setup covers both continuous and semi-discrete optimal transport.

\begin{assumption}[Cost] \label{assumption:cost} The cost $c:\R^d\times \R^d\to \R$ is Lipschitz with constant $L>0$.      
\end{assumption}

Lipschitz-continuity can be relaxed to uniform continuity at the expense of a more complicated expression for the constant in the main results; see \cref{app:uniformly-continuous-costs}.

The quadratically regularized optimal transport (QOT) problem with regularization parameter $\eps>0$ is
\begin{equation}\label{eq:primal}
  \QOT_\eps(P,Q):=  \inf_{\pi\in \Pi(P,Q)} \int c(x,y) d\pi(x,y) +\frac{\eps }{2}\left\| \frac{d \pi}{ d(P\otimes Q)}\right\|^2_{L^{2}(P\otimes Q)}
\end{equation}
with the convention that the last term is $+\infty$ if $\pi \not\ll P\otimes Q$. The dual problem is 
\begin{equation}\label{eq:dual}
     \DUAL_\eps(P,Q)=\sup_{(f,g)\in L^2(P)\times L^2(Q)} \Gamma(f,g),
\end{equation}
where the dual objective function $\Gamma: L^{2}(P) \times L^{2}(Q) \to \RR$ is defined by
\begin{align}\label{eq:dualoperator}
     \Gamma(f,g) := \int \left(f(x) + g(y) -\frac{1}{2\eps} \big(f(x)+ g(y)- c(x,y)\big)_+^{2}\right) d(P\otimes Q)(x,y).
\end{align}
The gradient of $\Gamma$ at $(f,g)\in L^{2}(P) \times L^{2}(Q)$ is
\begin{equation}\label{eq:gradientGamma}
{\rm D}\Gamma\left(  \begin{array}{c}
    f\\
    g
\end{array} \right)
= \left(  \begin{array}{c}
    {\rm D}_1 \Gamma \\
    {\rm D}_2 \Gamma
\end{array} \right)  \left(  \begin{array}{c}
    f\\
    g
\end{array} \right)= \left(  \begin{array}{c}
     1-\frac{1}{\eps}\int{  \left(f(\cdot)+ g(y)- c(\cdot, y)\right)_+ } dQ(y)\\[.3mm]
      1-\frac{1}{\eps}\int{  \left(f(x)+ g(\cdot)- c(x, \cdot)\right)_+ } dP(x)
\end{array} \right)
\end{equation}
and its action on $(u,v) \in L^{2}(P) \times L^{2}(Q)$ is
$$ \langle  {\rm D}\Gamma(f,g),  (u,v) \rangle_{ L^{2}(P) \times L^{2}(Q) } = \int  (u\oplus v) \left(1- \frac{1}{\eps} \left(f\oplus g- c\right)_+\right) d(P\otimes Q). $$

Next, we recall a number of known properties. They do not require the full strength of \cref{assumption:marginals}, only that $\Omega,\Omega'$ are compact and~$\Omega$ is connected.

\begin{proposition}\label{pr:prelims}
    \begin{enumerate}
    \item
    The strong duality 
      $\QOT_\eps(P,Q)=\DUAL_\eps(P,Q)$
    holds. 
    \item
    The dual problem~\eqref{eq:dual} admits an optimizer $(f_*,g_*)\in L^2(P)\times L^2(Q)$. The functions $(f_*,g_*)$ are unique up to translation in the following sense: fixing one optimizer $(f_*,g_*)$, the set of all optimizers is given by $\{(f_*+a,g_*-a): a\in\R\}$. 
    \item We can choose versions $f_*:\Omega\to\R$ and $g_*:\Omega'\to\R$ that are $L$-Lipschitz. Those versions are fixed in all that follows. We call $(f_*,g_*)$ the \emph{potentials.}
    \item
    The potentials $(f_*,g_*)$ satisfy the first-order condition
    \begin{equation}
        \label{eq:FOC}
        \begin{cases}
             \int (f_*(x)+ g_*(y)-c(x,y))_+ dQ(y)=\eps \quad \text{for all }x\in\Omega,\\
             \int (f_*(x)+ g_*(y)-c(x,y))_+ dP(x)=\eps \quad \text{for all }y\in\Omega'.
        \end{cases}
    \end{equation}
    \item
    The primal problem~\eqref{eq:primal} has a unique solution $\pi_\eps\in\Pi(P,Q)$. It is related to the potentials by
    \begin{equation}
    \label{eq:primal-dual}
    \frac{d\pi_\eps}{d(P \otimes Q)}(x,y) = \frac{1}{\eps} \big( f_*(x) + g_*(y) - c(x,y) \big)_+.
\end{equation}
\end{enumerate}
\end{proposition}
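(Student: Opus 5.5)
Since \cref{pr:prelims} collects known facts, I will assemble the standard arguments in the natural order: existence and weak duality, first-order conditions, then Lipschitz regularity, and finally uniqueness.

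\emph{Weak duality and existence.} For any $\pi\in\Pi(P,Q)$ with density $z=d\pi/d(P\otimes Q)$ and any $(f,g)$, the marginal constraints give $\int (f\oplus g)\,d\pi=\int f\,dP+\int g\,dQ$. The pointwise Fenchel inequality $zt\le \frac{\eps}{2}z^2+\frac{1}{2\eps}t_+^2$ for $z\ge0$, applied with $t=f\oplus g-c$, then yields $\Gamma(f,g)\le \int c\,d\pi+\frac\eps2\|z\|^2$. This proves $\DUAL_\eps\le\QOT_\eps$.

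The primal problem has a unique solution: its objective is strictly convex and weakly lower semicontinuous in $z\in L^2(P\otimes Q)$, and the feasible set of densities is convex and weakly closed. Feasibility holds because $P\otimes Q$ itself is a coupling.

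For the dual, I maximize over a compact class. Given any $(f,g)$, I replace $f$ by the pointwise maximizer of $\Gamma(\cdot,g)$. Since $\partial_{f(x)}$ of the integrand in $x$ is $1-\frac1\eps\int (f(x)+g(y)-c(x,y))_+dQ(y)$, which is strictly decreasing where positive, this maximizer is the unique root $f(x)=F_g(x)$ of $\int (a+g(y)-c(x,y))_+dQ=\eps$ in the variable $a$. The same argument, with the roles swapped, produces $G_f$. These $c$-transform-type maps are $L$-Lipschitz. Indeed, if $|c(x,y)-c(x',y)|\le L|x-x'|$, then shifting $a$ by $L|x-x'|$ preserves the inequality between the two integrals, so $|F_g(x)-F_g(x')|\le L|x-x'|$.

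After normalizing by the translation invariance (say $f(x_0)=0$), alternating these maps gives maximizing sequences of $L$-Lipschitz functions. Uniform bounds follow from the first-order conditions: the root equation forces $f\oplus g-c$ to be positive somewhere and bounded by roughly $\eps+L\,\mathrm{diam}$. Arzel\`a--Ascoli and continuity of $\Gamma$ under uniform convergence then give a maximizer that is Lipschitz and satisfies \eqref{eq:FOC} pointwise on $\Omega,\Omega'$, proving items 3 and 4. Any $L^2$ optimizer has a vanishing gradient \eqref{eq:gradientGamma}, so $f_*=F_{g_*}$ a.e., and this supplies the Lipschitz versions.

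\emph{Strong duality and the primal--dual relation.} I define $\pi$ by \eqref{eq:primal-dual}. By \eqref{eq:FOC} its marginals are $P,Q$, so $\pi\in\Pi(P,Q)$. With $z=\frac1\eps(f_*\oplus g_*-c)_+$ the Fenchel inequality above is an equality pointwise, so $\Gamma(f_*,g_*)=\int c\,d\pi+\frac\eps2\|z\|^2$. Combined with weak duality this proves items 1 and 5, with $\pi=\pi_\eps$ by uniqueness.

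\emph{Uniqueness of potentials: the main obstacle.} Let $(f,g)$ be another optimizer. The equality case in Fenchel forces $(f\oplus g-c)_+=\eps z=(f_*\oplus g_*-c)_+$ a.e., so $f\oplus g=f_*\oplus g_*$ on $\cS=\{f_*\oplus g_*>c\}$, which has positive measure. Writing $u=f-f_*$ and $v=g-g_*$, this says $u(x)=-v(y)$ on $\cS$.

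To conclude $u$ is constant, I plan to use connectedness of $\Omega$ and continuity. By \eqref{eq:FOC}, each $x$-section $\cS_x$ has positive $Q$-measure, so $u(x)=-v(y)$ for $y\in\cS_x$. Since $\cS$ is open relative to $\Omega\times\Omega'$, points $x$ near $x'$ share a common $y\in\cS_x\cap\cS_{x'}$, which gives $u(x)=u(x')$. Thus $u$ is locally constant, and therefore constant on the connected set $\Omega$. Each $y$-section is nonempty by the second line of \eqref{eq:FOC}, so $v=-a$ on $\Omega'$.

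The delicate point is the "common $y$" step. It requires $\cS_x\cap\cS_{x'}\cap\Omega'$ to be nonempty with $Q$-positive measure for nearby $x$. I would derive this from continuity of $f_*\oplus g_*-c$: a $y$ with strict positivity at $x$ stays positive in a neighborhood of $x$. The remaining care is ensuring the a.e.\ identities hold for continuous versions, which I handle by replacing $(f,g)$ with $(F_g,G_f)$-type Lipschitz versions.
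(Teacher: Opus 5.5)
Your proof is correct, and it takes a different route from the paper simply because the paper gives no proof: it defers to \cite[Lemma~3.1]{GonzalezSanzNutzRiveros.25} for uniqueness and to \cite[Section~2]{Nutz.2024} for the rest. Those references treat these facts in greater generality, where an equicontinuity argument like yours is not available. Your version is a self-contained reconstruction tailored to the present setting, built from three ingredients:
\begin{itemize}
\item The inequality $zt\le\frac{\eps}{2}z^2+\frac{1}{2\eps}t_+^2$ gives weak duality. Through its equality case it also gives strong duality, the relation \eqref{eq:primal-dual}, and the identity $(f\oplus g-c)_+=(f_*\oplus g_*-c)_+$ for any two optimizers.
\item The $L$-Lipschitz maps $F_g,G_f$ defined by the first-order equations, together with Arzel\`a--Ascoli, give existence and Lipschitz versions.
\item Local constancy of $f-f_*$ on the connected set $\Omega$ gives uniqueness.
\end{itemize}
What your approach buys is transparency about the hypotheses. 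Existence, regularity and \eqref{eq:FOC} use only compactness and the Lipschitz cost. Connectedness enters only in the uniqueness step, and only for $\Omega$, which matches the remark before the proposition.

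Three points each deserve one more sentence.

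\emph{Maximizing sequences.} ``Alternating these maps'' should mean applying a single round $(f_n,g_n)\mapsto(F_{g_n},G_{F_{g_n}})$ to every term of an arbitrary maximizing sequence, which can only increase $\Gamma$. That the full alternation from one starting point reaches the supremum is not immediate: it is coordinate ascent, whose convergence the paper obtains in \cref{coro:Implicit} from this very proposition.

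\emph{Pointwise first-order conditions.} For an optimizer $(f,g)$, the vanishing gradient gives $f=F_g$ $P$-a.e.\ and $g=G_f$ $Q$-a.e. Set $\hat f:=F_g$ and $\hat g:=G_{\hat f}$. Since $F_g$ and $G_f$ depend only on the a.e.-classes of $g$ and $f$, you get $\hat f=F_{\hat g}$ and $\hat g=G_{\hat f}$ pointwise, so \eqref{eq:FOC} holds everywhere.

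\emph{Uniqueness step.} Once both optimizers are replaced by Lipschitz versions, the a.e.\ identity $(f\oplus g-c)_+=(f_*\oplus g_*-c)_+$ extends by continuity to all of $\Omega\times\Omega'=\mathrm{supp}(P\otimes Q)$. Hence $u(x)=-v(y)$ holds at every point of $\cS$. It then suffices that $\cS_x\cap\cS_{x'}$ is nonempty, which follows from openness of $\cS$ and $Q(\cS_x)>0$; no positive-measure statement about this intersection is needed.
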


We refer to the proof of \cite[Lemma~3.1]{GonzalezSanzNutzRiveros.25} for the uniqueness of the potentials and to \cite[Section~2]{Nutz.2024} for all other claims.
 
Given functions $f:\Omega\to\R$ and $g:\Omega'\to\R$, we denote $(f\oplus g)(x,y):=f(x)+g(y)$. The uniqueness of the potentials up to translation implies that $f_*\oplus g_*$ is uniquely determined. More generally, we observe that the dual objective~\eqref{eq:dualoperator} can be seen as a function of $f\oplus g$ rather than the two separate functions $f$ and $g$. This point of view is used for our main proofs, which consider the operator $\Phi(f\oplus g):=\Gamma(f,g)$ on the space $\Hil = \big\{f\oplus g: (f,g)\in L^{2}(P)\times L^{2}(Q)\big\} \subset  L^{2}(P\otimes Q)$. This is mathematically convenient and also leads to slightly tighter constants. On the other hand, algorithms (and scientists) often consider $f,g$ as individual functions that are updated separately, hence we take that point of view in the presentation of the main results below.

\subsection{PL inequality}\label{se:PL}

Our main result is a PL-type inequality for the dual objective~$\Gamma$ of~\eqref{eq:dualoperator}. We recall that  $(f_*,g_*)\in \cC(\Omega)\times\cC(\Omega')$ are fixed (but arbitrary) potentials and the constants $\lambda_P,\Lambda_P,\delta_P,L$ were defined in \cref{assumption:marginals}, \cref{rk:coneCondition} and \cref{assumption:cost}. 

\begin{theorem}[PL inequality for $\Gamma$]\label{th:PL-Gamma}
    For $(f,g)\in L^\infty(\Omega)\times L^\infty(\Omega')$, we have the error bound
    $$ \|f\oplus g-f_*\oplus g_*\|_{L^2(P\otimes Q)} \leq \gamma_{\eps}\max(\|f\oplus g-f_*\oplus g_*\|_{\infty} , \eps) \,\|{\rm D}\Gamma(f,g)\|_{L^2(P)\times L^2(Q)}$$
    and the PL inequality
    \begin{align}
        \|{\rm D}\Gamma (f,g)\|_{L^{2}(P) \times L^{2}(Q)}^{2} 
        &\geq \frac{1}{\gamma_\eps  \max\left(\|f\oplus g - f_*\oplus g_*\|_\infty,\eps\right) }\big(\QOT_\eps(P,Q)- \Gamma (f,g) \big) ,\label{eq:PLforGamma}
    \end{align}
    where 
    \begin{align}\label{eq:gamma-defn}
    \gamma_\eps=  16 \left(\delta_P^{-1}\max\left( \frac{8L}{\eps},1\right)^d\right) \frac{\Lambda_P^2}{\lambda_P^2}
\frac{\bigl(\lceil 8L\,{\rm diam}(\Omega)/\eps \rceil\bigr)^{d+2}}{\inf_{y\in\Omega'} Q\bigl(B_{\frac{\eps}{8L}}(y)\bigr)}.
    \end{align}
\end{theorem}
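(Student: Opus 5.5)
We argue in the space $\Hil$, writing $\Phi(h)=\Gamma(f,g)$ for $h=f\oplus g$, $h_*=f_*\oplus g_*$ and $w=h-h_*$. We interpolate along $h_t=h_*+tw$, $t\in[0,1]$. Since ${\rm D}\Gamma(f_*,g_*)=0$, we have
\[
\langle {\rm D}\Gamma(f,g),(u,v)\rangle=\langle {\rm D}\Gamma(f,g)-{\rm D}\Gamma(f_*,g_*),(u,v)\rangle
\]
for $(u,v)$ with $u\oplus v=w$. Integrating the derivative of the gradient along the path gives
\[
-\langle {\rm D}\Gamma(f,g),(u,v)\rangle=\frac1\eps\int_0^1\!\!\int w^2\,\1_{\{h_t\ge c\}}\,d(P\otimes Q)\,dt .
\]
Similarly, $\QOT_\eps-\Gamma(f,g)=\int_0^1(1-t)\frac1\eps\int w^2\1_{\{h_t\ge c\}}\,dt$ by Taylor's formula. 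Both claims therefore reduce to a coercivity estimate of the form
\[
\frac1\eps\int_0^1\!\!\int w^2\1_{\{h_t\ge c\}}\,d(P\otimes Q)\,dt\;\ge\;\frac{c_0}{\max(\|w\|_\infty,\eps)}\,\|w\|_{L^2}^2 ,
\]
with $1/c_0$ comparable to $\gamma_\eps$. Indeed, the left side is at most $\|{\rm D}\Gamma\|\,\|w\|$ once we choose $(u,v)$ orthogonally, so that $\|u\|_{L^2(P)}^2+\|v\|_{L^2(Q)}^2\le\|w\|^2$ (e.g.\ $\int u\,dP=0$). This yields the error bound. The PL inequality then follows from concavity: $\QOT_\eps-\Gamma\le\langle{\rm D}\Gamma,(f,g)-(f_*,g_*)\rangle\le\|{\rm D}\Gamma\|\,\|w\|$, combined with the error bound.

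To get the coercivity, we restrict $t$ to the range $t\le \eps/(4\max(\|w\|_\infty,\eps))$. On this range $|h_t-h_*|\le\eps/4$, so $\{h_t\ge c\}\supset\{h_*-c\ge\eps/4\}=:\cS'$. This restriction produces the factor $\max(\|w\|_\infty,\eps)$. It therefore suffices to show
\[
\int w^2\1_{\cS'}\,d(P\otimes Q)\ge c_1\|w\|^2_{L^2(P\otimes Q)}\qquad\text{for all } w=u\oplus v .
\]
The support of $\cS'$ is understood through \eqref{eq:FOC}. For each $x$, $\int(h_*(x,y)-c)_+\,dQ=\eps$ and the integrand is $2L$-Lipschitz, so there is a $y(x)$ with $h_*(x,y(x))-c\ge\eps$. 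Then $\cS'$ contains $B_{\eps/(8L)}(x)\times B_{\eps/(8L)}(y(x))$ by the Lipschitz property. The symmetric statement holds for each $y$. Thus $\cS'$ contains product "rectangles" of radius $r=\eps/(8L)$ around every point of $\Omega$ and of $\Omega'$. On a rectangle $A\times B$, the elementary inequality
\[
\int_{A\times B}(u\oplus v)^2\ge Q(B)\int_A(u-\bar u_A)^2\,dP+\ldots
\]
controls the oscillation of $u$ and of $v$ locally.

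The main obstacle is chaining these local controls into a global bound on $\|u\oplus v\|^2$, since $u,v$ are only $L^2$ and the constant $a$ in $(u+a,v-a)$ must be fixed consistently across rectangles. We plan a discrete Poincaré/chain argument. Cover $\Omega$ by $N=\lceil 8L\,\diam(\Omega)/\eps\rceil$ overlapping balls of radius $r$; convexity of $\Omega$ lets us connect any two via a chain of at most $N$ balls whose consecutive overlaps have $P$-mass $\ge\delta_P\min(r^d,1)$. On each rectangle, $\int(u\oplus v)^2\1_{A\times B}$ pins $u$ to $-\bar v_B$ on $A$ up to an error. Overlapping balls $A,A'$ (with different partners $B,B'$) force the constants $\bar v_B,\bar v_{B'}$ to agree up to an error, again by \eqref{eq:FOC}. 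Summing the errors along chains by Cauchy--Schwarz costs a factor $N^2$. Averaging over the $N^d$ cells costs the remaining powers of $N$, the density ratio $\Lambda_P^2/\lambda_P^2$, $\delta_P^{-1}\max(8L/\eps,1)^d$, and $1/\inf_y Q(B_r(y))$. This reproduces the structure of $\gamma_\eps$, which we would then verify numerically (factor $16$ from the $t$-integration and the $1/4$ restriction). If the chaining proves awkward, an alternative is to bound the bottom of the spectrum of $w\mapsto \1_{\cS'}w$ restricted to $\Hil$ via the same covering.
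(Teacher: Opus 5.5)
Your reduction is correct and is essentially the paper's argument. The ingredients match one by one:
\begin{itemize}
\item the path $h_t=h_*+tw$ and the identity $-\langle {\rm D}\Gamma(f,g),(u,v)\rangle=\phi'(1)=\frac1\eps\int_0^1\!\int w^2\1_{\{h_t\ge c\}}$;
\item the restriction to $t\lesssim \eps/\max(\|w\|_\infty,\eps)$, so that $\{h_t\ge c\}$ contains a fixed superlevel set of $h_*-c$;
\item the centered representative, which replaces \cref{le:translationPhiToGamma};
\item concavity for the PL part, with the sign $\QOT_\eps-\Gamma(f,g)\le\langle{\rm D}\Gamma(f,g),(f_*-f,g_*-g)\rangle$.
\end{itemize}
The gap is in the coercivity $\int_{\cS'}(u\oplus v)^2\,d(P\otimes Q)\ge c_1\|u\oplus v\|^2$. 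This is the real content of the theorem, and you leave it as a plan that does not close.

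\textbf{Your chain only controls $u$.} Covering $\Omega$ by rectangles $A\times B$ and linking overlapping $A$'s does two things. It pins $u$ to one constant $a$ on $\Omega$, and it pins the averages $\bar v_B$ on the partner balls $B=B_r(y(x))$. That consistency comes from the overlaps $A\cap A'$ having positive mass, not from \eqref{eq:FOC}. But it says nothing about $v$ on the rest of $\Omega'$, and there is nothing to chain there: $\Omega'$ is an arbitrary compact set, finite in the semi-discrete case.

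The paper handles $v$ by its spectral step. \cref{lemma:self-adj} shows that the bottom of the spectrum of $\mathbb M$ is an eigenvalue. The second eigen-equation \eqref{eq:-for-v-eigenvalue}, together with $P(\cT_{r,y})\ge\kappa$, then gives $\|v\|^2_{L^2(Q)}\le 2\|u\|^2_{L^2(P)}/\kappa$, hence $\lambda_0\ge\alpha\kappa/3$.

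An elementary substitute, pointwise in $y$, also works. Take $a=\int u\,dP$ and let $\cT'_y$ be the $y$-section of $\cS'$. Then
$$\kappa\,(v(y)+a)^2\le 2\int_{\cT'_y}(u+v(y))^2\,dP+2\|u-a\|^2_{L^2(P)}.$$
Integrate in $Q$, use $\|u-a\|^2_{L^2(P)}\le\alpha^{-1}\int_{\cS'}w^2$ from \cref{lemma:PL-forVariance}, and use $\|w\|^2=\|u-a\|^2_{L^2(P)}+\|v+a\|^2_{L^2(Q)}$. This yields $c_1=\alpha\kappa/5$, i.e.\ the theorem with $\tfrac54\gamma_\eps$ instead of $\gamma_\eps$. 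Either way, this is where $\kappa^{-1}=\delta_P^{-1}\max(8L/\eps,1)^d$ enters, not from averaging over cells.

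\textbf{Your bookkeeping does not produce the stated $\gamma_\eps$.} There are three problems:
\begin{itemize}
\item A $d$-dimensional $\Omega$ is not, in general, covered by $N=\lceil 8L\,{\rm diam}(\Omega)/\eps\rceil$ balls of radius $r=\eps/(8L)$; you need on the order of $N^d$.
\item Consecutive overlaps in a chain only contain balls of strictly smaller radius (about $r/2$), so their mass is not $\ge\delta_P\min(r^d,1)$.
\item Summing the local rectangle energies against $\int_{\cS'}w^2$ needs a bounded-multiplicity cover.
\end{itemize}
All of these introduce $C^d$ factors that are absent from $\gamma_\eps$. The factors you list, such as $\Lambda_P^2/\lambda_P^2$, are matched to the target rather than derived.

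The exact constant comes from three ingredients:
\begin{itemize}
\item the variance identity on the sections $\cT^\eps_y$ in \cref{lemma:PL-forVariance};
\item the pointwise segment chain with the linear change of variables (Jacobian factor $m^d$) in \cref{Lemma:Bound-energy}, which uses convexity to keep $T(\Omega\times\Omega)\subset\Omega\times\Omega$ and gives $m^{d+2}$ with no dimensional constant;
\item the eigenvector argument, which, unlike the pointwise bound above, carries no cross error term and so reaches $\beta_\eps=\alpha\kappa/4$, hence $\gamma_\eps=4/\beta_\eps$.
\end{itemize}
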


The proof is given in \cref{Section-Proof-of-PL}.

\subsection{Convergence of algorithms}\label{Section:linear-convergence}
Next, we apply the PL inequality to show that three natural algorithms for the dual problem~\eqref{eq:dual} converge linearly for suitable step sizes: gradient ascent, coordinate ascent, and coordinate gradient ascent. All algorithms have iterates $(f_n,g_n)$, and linear convergence means that the suboptimality gap $\Delta_n : = \QOT_\eps(P,Q)- \Gamma(f_n,g_n)\equiv \Gamma(f_*,g_*) - \Gamma(f_n,g_n)$ satisfies $\Delta_n \leq (1-q)^n \Delta_0$ for some $q\in (0,1)$ that is made explicit below. Numerical experiments for these algorithms can be found in~\cite{lorenz.2019.preprint} (they are omitted in the journal version~\cite{LorenzMahler.22}). The experiments suggest that all three algorithms are efficient and consistent in the examples considered, but no theoretical analysis was given. 

The proof of each corollary below is reported in \cref{se:proofs-of-linear} and combines three facts: the PL inequality (\cref{th:PL-Gamma}), the Lipschitz property of ${\rm D}\Gamma$ (\cref{le:lipschitzgradient}), and a uniform bound for $\|f_n\oplus g_n-f_*\oplus g_*\|_{\infty}$. The latter gives a uniform constant for the PL inequality and has a slightly different proof for each algorithm; after that, the argument for linear convergence is standard. As mentioned in the introduction, linear convergence of gradient ascent was previously shown in \cite{GonzalezSanzNutzRiveros.25}, but without explicit control of the contraction rate. Apart from that, we are not aware of any previous results on convergence rates of QOT algorithms. For the statements that follow, we recall that  $(f_*,g_*)\in \cC(\Omega)\times\cC(\Omega')$ are fixed (but arbitrary) potentials and the constant $\gamma_\eps$ defined in~\eqref{eq:gamma-defn}.

\subsubsection{Gradient ascent}

The  gradient ascent algorithm with step size $\eta>0$ is initialized at $(f_0,g_0)\in L^\infty(\Omega)\times L^\infty(\Omega')$ and defined by
\begin{equation}\label{eq:gradientascent}
 \left(  \begin{array}{c}
     f_{n+1}\\
       g_{n+1}
\end{array} \right) = \left(  \begin{array}{c}
     f_{n}\\
       g_{n}
\end{array} \right)+ \eta\,   {\rm D}\Gamma \left(  \begin{array}{c}
     f_{n}\\
       g_{n}
\end{array} \right) , \quad n\geq0,
\end{equation}
where the gradient ${\rm D}\Gamma$ has the explicit form stated in~\eqref{eq:gradientGamma}. It is well known that a PL inequality implies linear convergence of gradient ascent. Given the specific form of our (local) PL inequality~\eqref{eq:PLforGamma}, we only need to bound $\|f_n\oplus g_n-f_*\oplus g_*\|_{\infty}$ uniformly in~$n$. 

\begin{corollary}[Linear convergence of gradient ascent]\label{coro:GD}
    Let $(f_0,g_0)\in L^\infty(\Omega)\times L^\infty(\Omega')$ and define $(f_n,g_n)_{n\geq 1}$ by~\eqref{eq:gradientascent}. For every $\eta\in (0,\eps)$ and $n\geq1$,
    \begin{align}\label{eq:GDIterateBound}
      \|f_{n}\oplus g_{n} - f_*\oplus g_*\|_\infty \leq 2 \|f_{0}\oplus g_{0} - f_*\oplus g_*\|_\infty
    \end{align}
    and the suboptimality gap $\Delta_n : = \QOT_\eps(P,Q)- \Gamma(f_n,g_n)$ satisfies
     \begin{align*}
          \Delta_n \leq (1-q)^n \Delta_0\qquad\mbox{for}\qquad q:=
          \frac{\eta \left(1-\frac{\eta }{\eps} \right)} {\gamma_{\eps}\max(2\|f_{0}\oplus g_{0} - f_*\oplus g_*\|_\infty, \eps)}.
     \end{align*}
     Moreover,
     \begin{align*}
         \|f_n\oplus g_n - f_*\oplus g_*\|_{L^2(P\otimes Q)}^2
         \leq \frac{\gamma_\eps^2\max(2\|f_{0}\oplus g_{0} - f_*\oplus g_*\|_\infty, \eps)^2}{\eta \left(1-\frac{\eta }{\eps} \right)}\,\Delta_0\,(1-q)^n.
     \end{align*}
     
\end{corollary}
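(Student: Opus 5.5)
The plan has three steps. First, a smoothness (descent) lemma gives a per-step ascent. Second, a uniform $L^\infty$ bound on the iterates makes the PL constant of \cref{th:PL-Gamma} uniform in $n$. Third, the PL inequality turns the per-step ascent into geometric decay of $\Delta_n$, and the error bound turns that into the $L^2$ estimate.

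\textbf{Step 1 (ascent).} The map $t\mapsto \frac{1}{2\eps}(t)_+^2$ has $\frac1\eps$-Lipschitz derivative. Together with Jensen ($\|\int h\,dQ\|_{L^2(P)}\le\|h\|_{L^2(P\otimes Q)}$) and $\|u\oplus v\|^2_{L^2(P\otimes Q)}\le 2(\|u\|^2+\|v\|^2)$, this gives a Lipschitz gradient (\cref{le:lipschitzgradient}). I will use the sharper route through $\Phi$ on $\Hil$. The increment is $u\oplus v=\eta({\rm D}_1\Gamma\oplus {\rm D}_2\Gamma)$, and the quadratic remainder is bounded by $\frac{1}{2\eps}\|u\oplus v\|^2_{L^2(P\otimes Q)}$. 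Expanding the square, the cross term $2\int {\rm D}_1\Gamma\,dP\int {\rm D}_2\Gamma\,dQ$ vanishes. Indeed, $\int {\rm D}_1\Gamma\, dP=\int {\rm D}_2\Gamma\, dQ = 1-\frac1\eps\int (f\oplus g-c)_+\,d(P\otimes Q)$, so the cross term is a square of the same sign, and I expect it to combine to give exactly $\|{\rm D}\Gamma\|^2$. If this fails, the factor $2$ is absorbed into the step-size condition. The target is
\[
\Gamma(f_{n+1},g_{n+1})\ge \Gamma(f_n,g_n)+\eta\Bigl(1-\tfrac{\eta}{\eps}\Bigr)\|{\rm D}\Gamma(f_n,g_n)\|^2 ,
\]
which matches the constant in $q$. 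I would compute this carefully rather than quote the generic $L$-smooth bound.

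\textbf{Step 2 (uniform $L^\infty$ bound \eqref{eq:GDIterateBound}).} This is the main obstacle. Write $h_n=f_n\oplus g_n-f_*\oplus g_*$ and use the first-order condition \eqref{eq:FOC} to express
\[
{\rm D}_1\Gamma(f_n,g_n)(x)=\frac1\eps\int\bigl[(f_*\oplus g_*-c)_+-(f_n\oplus g_n-c)_+\bigr](x,y)\,dQ(y).
\]
Since $t\mapsto t_+$ is monotone and $1$-Lipschitz, $f_{n+1}(x)-f_*(x)=f_n(x)-f_*(x)-\frac\eta\eps\int \theta\, h_n(x,y)\,dQ(y)$ for some $\theta\in[0,1]$, and similarly for $g$. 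Let $M_n=\sup h_n$ and $m_n=\inf h_n$. At a point with $h_n$ near $M_n$ each correction is nonpositive, and the corrections are averages of $h_n$ weighted by weights at most $\eta/\eps<1$. I would prove an invariance of the form $\sup h_{n+1}\le \max(M_n,0)$-type bounds separately for the $f$- and $g$-components. Because $f$ and $g$ are shifted separately, one only gets $\operatorname{osc}$-control, which is where the factor $2$ arises: $\|h_n\|_\infty\le \sup h_0-\inf h_0\le 2\|h_0\|_\infty$. Concretely, I would track $\sup_x(f_n-f_*)+\sup_y(g_n-g_*)$ and $\inf_x(f_n-f_*)+\inf_y(g_n-g_*)$ and show they are monotone using $\eta<\eps$, which keeps every map $t\mapsto t-\frac\eta\eps\theta t$ order-preserving.

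\textbf{Step 3 (conclusion).} Apply \eqref{eq:PLforGamma} with $\max(\|h_n\|_\infty,\eps)\le \max(2\|h_0\|_\infty,\eps)$ and combine with Step 1 to get $\Delta_{n+1}\le \Delta_n - q\Delta_n$, hence $\Delta_n\le(1-q)^n\Delta_0$. For the $L^2$ statement, square the error bound of \cref{th:PL-Gamma}:
\[
\|h_n\|^2_{L^2}\le \gamma_\eps^2\max(\cdot)^2\,\|{\rm D}\Gamma(f_n,g_n)\|^2 .
\]
By Step 1, $\|{\rm D}\Gamma(f_n,g_n)\|^2\le \Delta_n/(\eta(1-\eta/\eps))$, since $\Gamma(f_{n+1},g_{n+1})-\Gamma(f_n,g_n)\le \Delta_n$. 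Inserting $\Delta_n\le(1-q)^n\Delta_0$ gives the stated estimate.
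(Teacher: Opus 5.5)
\textbf{Gap in Step~2.} Your Steps~1 and~3 match the paper's proof. Step~2 is where the argument fails. It has to establish \eqref{eq:GDIterateBound}; the paper does this by citing \cite[Lemma~5.2]{GonzalezSanzNutzRiveros.25}. The quantities you propose to track are not monotone: $\sup_x(f_n-f_*)+\sup_y(g_n-g_*)=\sup h_n$ can increase. Your intermediate bound $\|h_n\|_\infty\le\sup h_0-\inf h_0$ is also false.

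To see this, take $c\equiv0$, $\eps=2$ and $f_*=g_*\equiv1$, which satisfy \eqref{eq:FOC}. As long as $h_n>-2$, no positive part is active, so $\theta\equiv1$ and the iteration is linear:
\[
h_{n+1}=(1-\tau)h_n-\tau\bar h_n,\qquad \tau=\eta/\eps,\quad \bar h_n=\int h_n\,d(P\otimes Q).
\]
\begin{itemize}
\item If $h_0=0\oplus b_0$ with $b_0\in\{0,-1\}$ and $Q(\{b_0=-1\})=p\in(0,1)$, then $\sup h_0=0$ but $\sup h_1=\tau p>0$. So even $\sup h_{n+1}\le\max(\sup h_n,0)$ fails.
\item If $h_0\equiv\delta>0$, then $\sup h_0-\inf h_0=0$, while $h_1\equiv(1-2\tau)\delta\neq0$ for $\tau\neq\frac12$.
\end{itemize}
There are two reasons. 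First, the correction to $f_n(x)$ averages $\theta h_n(x,\cdot)$ over all $y$. It therefore need not be nonpositive at points where $h_n(x,y)$ is near its supremum. Second, the mean of $h_n$ evolves separately from its oscillation, so oscillation control cannot bound $\|h_n\|_\infty$.

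\textbf{The fix.} The monotone quantity is $\max(\|f_n-f_*\|_\infty,\|g_n-g_*\|_\infty)$. Your own $\theta$-representation gives
\[
f_{n+1}(x)-f_*(x)=(1-w(x))\,(f_n-f_*)(x)-\frac{\eta}{\eps}\int\theta(x,y)\,(g_n-g_*)(y)\,dQ(y),\qquad w(x):=\frac{\eta}{\eps}\int\theta(x,y)\,dQ(y)\in[0,1).
\]
Since $\eta<\eps$, this yields $|f_{n+1}-f_*|\le(1-w)\|f_n-f_*\|_\infty+w\|g_n-g_*\|_\infty$, and symmetrically for $g_{n+1}$. This is the convex-combination argument of Step~1 in the proof of \cref{coro:Explicit}.

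The factor~$2$ then comes from
\[
\|h_n\|_\infty\le\|f_n-f_*\|_\infty+\|g_n-g_*\|_\infty\le2\bigl(\|f_0-f_*\|_\infty+\|g_0-g_*\|_\infty\bigr),
\]
not from an oscillation bound. Apply this to every pair of potentials $(f_*-t,g_*+t)$, which leaves $h_n$ unchanged, and take the infimum over $t\in\R$ using \cref{rk:sumNorm}.

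\textbf{A correction in Step~1.} The cross term does not vanish. It equals $2\eta^2I_0^2\ge0$, where $I_0=\int{\rm D}_1\Gamma\,dP=\int{\rm D}_2\Gamma\,dQ$ is as in \cref{le:translationPhiToGamma}. Jensen's inequality gives $2I_0^2\le\|{\rm D}\Gamma\|^2_{L^2(P)\times L^2(Q)}$. Hence the remainder is at most $\frac{\eta^2}{\eps}\|{\rm D}\Gamma\|^2$. This is exactly the paper's $\frac{2}{\eps}$-smoothness bound, and it yields the factor $\eta(1-\frac{\eta}{\eps})$ with no change to the step-size condition.
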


\subsubsection{Coordinate ascent}
The coordinate ascent algorithm is an implicit algorithm which is initialized at $g_0\in L^\infty(\Omega')$ and iteratively optimizes the coordinates of $\Gamma$,
\begin{equation}
    \label{eq:implicit}
    f_{n} := \argmax_{f} \Gamma(f,g_n), \qquad g_{n+1} := \argmax_{g} \Gamma(f_{n},g), \qquad n\geq 0 .
\end{equation}
This is equivalent to iteratively solving the first-order conditions for $n\geq0$:
    \begin{align}
            \text{define $f_n(x)$ via} \quad \eps &= \int \big( f_{n}(x) + g_{n}(y) - c(x,y) \big)_+ \, dQ(y), \label{eq:implicitalgo-f} \\
            \text{then $g_{n+1}(y)$ via} \quad \eps &= \int \big( f_{n}(x) + g_{n+1}(y) - c(x,y) \big)_+ \, dP(x).\label{eq:implicitalgo-g}
    \end{align}
These equations imply that the argmax is unique and a bounded function (by the same arguments as \cite[Lemmas 2.4, 2.5]{Nutz.2024}), so that the iterates are uniquely defined.

We remark that the coordinate ascent algorithm becomes Sinkhorn's algorithm when the quadratic regularization is replaced by KL divergence (i.e., entropic optimal transport). In that case, the above equations are solved explicitly as the properties of the exponential function allow one to take $f_n$ and $g_{n+1}$ out of the integral. For quadratic regularization, the definition remains implicit. In \cite{LorenzMahler.22}, methods such as line search were proposed to solve the implicit problems. 

\begin{corollary}[Linear convergence of coordinate ascent]\label{coro:Implicit}
 Let $g_0\in L^\infty(\Omega')$ and define $(f_n,g_n)_{n\geq 0}$ by~\eqref{eq:implicit}. Then
 \begin{align}\label{eq:implicitIteratesBound}
 \| g_{n+1}  - g_{*} \|_{\infty} \leq  \| f_{n} - f_{*} \|_{\infty}\leq \| g_{n}  - g_{*} \|_{\infty} \leq \dots \leq \| g_0- g_* \|_\infty   
 \end{align}
 and the suboptimality gap $\Delta_n : = \QOT_\eps(P,Q)- \Gamma(f_n,g_n)$ satisfies
 \begin{align*}
    \Delta_n \leq (1-q)^n \Delta_0\qquad\mbox{for}\qquad 
    q:= \frac{\eps} {2\gamma_\eps\max(2\|g_{0} - g_*\|_\infty , \eps)}.
 \end{align*}
 Moreover,
 \begin{align*}
    \|f_n\oplus g_n - f_*\oplus g_*\|_{L^2(P\otimes Q)}^2
    \leq \frac{2\gamma_\eps^2\max(2\|g_{0} - g_*\|_\infty , \eps)^2}{\eps}\,\Delta_0\,(1-q)^n.
 \end{align*}
 
\end{corollary}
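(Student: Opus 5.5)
We follow the template described before \cref{coro:GD}: first a uniform sup-norm bound on the iterates, which fixes the constant in \cref{th:PL-Gamma}, then a per-step ascent estimate combined with the PL inequality.

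\textbf{Step 1: the sup-norm bounds \eqref{eq:implicitIteratesBound}.} These come from a comparison (monotonicity) argument on the first-order equations \eqref{eq:implicitalgo-f}, \eqref{eq:FOC}. Put $a:=\|g_n-g_*\|_\infty$. For fixed $x$, the map $t\mapsto\int (t+g_n(y)-c(x,y))_+\,dQ(y)$ is nondecreasing. Since $g_n\le g_*+a$,
$$\int (f_*(x)+a+g_n(y)-c)_+\,dQ \ \ge\ \int (f_*(x)+g_*(y)-c)_+\,dQ=\eps .$$
The defining equation for $f_n(x)$ then forces $f_n(x)\ge f_*(x)-a$. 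The symmetric argument gives $f_n(x)\le f_*(x)+a$. One point needs care: the map is only strictly increasing where the integral is positive, so we use that the solution of $\int(\cdot)_+=\eps>0$ is unique, as in \cite{Nutz.2024}. Hence $\|f_n-f_*\|_\infty\le\|g_n-g_*\|_\infty$. The same argument with $P$ and \eqref{eq:implicitalgo-g} gives $\|g_{n+1}-g_*\|_\infty\le\|f_n-f_*\|_\infty$. Consequently $\|f_n\oplus g_n-f_*\oplus g_*\|_\infty\le 2\|g_0-g_*\|_\infty$ for all $n$. Write $M:=\max(2\|g_0-g_*\|_\infty,\eps)$.

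\textbf{Step 2: per-step ascent.} At $(f_n,g_n)$ we have ${\rm D}_1\Gamma(f_n,g_n)=0$ by construction, so $\|{\rm D}\Gamma(f_n,g_n)\|^2=\|{\rm D}_2\Gamma(f_n,g_n)\|_{L^2(Q)}^2$. We need a smoothness estimate for the partial map $g\mapsto\Gamma(f_n,g)$. Since $t\mapsto (t)_+$ is $1$-Lipschitz and $P$ is a probability measure, ${\rm D}_2\Gamma(f_n,\cdot)$ is $\frac1\eps$-Lipschitz on $L^2(Q)$; this is the coordinate version of \cref{le:lipschitzgradient}.

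Because $g_{n+1}$ maximizes $\Gamma(f_n,\cdot)$, it does at least as well as the gradient step $g_n+\eps\,{\rm D}_2\Gamma(f_n,g_n)$. The descent lemma for that step gives
$$\Gamma(f_n,g_{n+1})-\Gamma(f_n,g_n)\ \ge\ \tfrac{\eps}{2}\|{\rm D}_2\Gamma(f_n,g_n)\|^2 .$$
Likewise $\Gamma(f_{n+1},g_{n+1})\ge\Gamma(f_n,g_{n+1})$, since $f_{n+1}$ maximizes the first coordinate. Therefore
$$\Delta_n-\Delta_{n+1}\ \ge\ \tfrac{\eps}{2}\|{\rm D}\Gamma(f_n,g_n)\|^2 .$$

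\textbf{Step 3: combine with \cref{th:PL-Gamma}.} By Step 1 the PL inequality applies with the uniform constant $\gamma_\eps M$:
$$\|{\rm D}\Gamma(f_n,g_n)\|^2\ \ge\ \frac{\Delta_n}{\gamma_\eps M}.$$
Substituting into Step 2 yields $\Delta_{n+1}\le(1-q)\Delta_n$ with $q=\eps/(2\gamma_\eps M)$, and iterating gives $\Delta_n\le(1-q)^n\Delta_0$.

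\textbf{Step 4: the $L^2$ bound.} From the error bound in \cref{th:PL-Gamma},
$$\|f_n\oplus g_n-f_*\oplus g_*\|_{L^2}^2\ \le\ \gamma_\eps^2M^2\,\|{\rm D}\Gamma(f_n,g_n)\|^2 .$$
By Step 2, $\|{\rm D}\Gamma(f_n,g_n)\|^2\le \frac{2}{\eps}(\Delta_n-\Delta_{n+1})\le\frac2\eps\Delta_n$. Combining with Step 3 gives the claimed bound $\frac{2\gamma_\eps^2M^2}{\eps}\Delta_0(1-q)^n$.

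\textbf{Main obstacle.} The expected difficulty is Step 1, specifically the rigor of the comparison argument. It needs a pointwise (not a.e.) statement, for which we use the continuous version of $f_*$ and the fact that $f_n$ is defined for every $x$. It also needs uniqueness of the root when part of the integrand vanishes.
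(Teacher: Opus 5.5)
Your proof is correct and follows essentially the paper's route. The steps are: uniform sup-norm control of the iterates from the first-order equations; the coordinatewise descent lemma with step $\eps$ combined with ${\rm D}_1\Gamma(f_n,g_n)=0$, giving $\Delta_n-\Delta_{n+1}\ge\frac{\eps}{2}\|{\rm D}\Gamma(f_n,g_n)\|^2$; then the PL inequality and error bound with $M=\max(2\|g_0-g_*\|_\infty,\eps)$. Your scalar monotone-comparison argument in Step~1 is an equivalent substitute for the paper's use of $(a)_+-(b)_+\le(a-b)\1_{a\ge0}$ and $Q(\cS_x)>0$.

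One slip in Step~1: your displayed inequality actually uses $g_n\ge g_*-a$, not $g_n\le g_*+a$. It then yields $f_n(x)\le f_*(x)+a$, not $f_n(x)\ge f_*(x)-a$. Your ``symmetric argument'' supplies the other bound, so the conclusion $\|f_n-f_*\|_\infty\le\|g_n-g_*\|_\infty$ is unaffected.
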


In the definition of~$q$, the constant $\|g_{0} - g_*\|_\infty$ can be replaced by $\inf_{a\in\R}\|g_{0} - g_*+a\|_\infty$ since the corollary holds for any pair of potentials, including  $(f_*+a,g_*-a)$.

\subsubsection{Coordinate gradient ascent}
The coordinate gradient ascent algorithm with step size $\eta>0$ is initialized at $(f_0,g_0)\in L^\infty(\Omega)\times L^\infty(\Omega')$ and defined by
\begin{equation}\label{Explicit-coordinate}
 f_{n+1} :=  f_{n} + \eta\,   {\rm D}_1\Gamma \left(\begin{array}{c}
    f_{n}\\
    g_{n}
\end{array} \right), \qquad g_{n+1} :=  g_{n} +  \eta\,  {\rm D}_2\Gamma \left(  \begin{array}{c}
    f_{n+1}\\
    g_{n}
\end{array} \right), \qquad n\geq0.
\end{equation}

\begin{corollary}[Linear convergence of coordinate gradient ascent]\label{coro:Explicit}
Let $(f_0,g_0)\in L^\infty(\Omega)\times L^\infty(\Omega')$ and define $(f_n,g_n)_{n\geq 1}$ by \cref{Explicit-coordinate}. For every $\eta\in (0,\eps/\sqrt{2})$,
\begin{align}\label{eq:ExplicitIterateBound}
    \max ( \|f_{n+1} - f_{*}\|_{\infty}, \|g_{n+1} - g_{*}\|_{\infty}) 
    &\leq \max ( \|f_{n} - f_{*}\|_{\infty}, \|g_{n} - g_{*}\|_{\infty})
\end{align}
    and the suboptimality gap $\Delta_n : = \QOT_\eps(P,Q)- \Gamma(f_n,g_n)$ satisfies
     \begin{align*}
          \Delta_n \leq (1-q)^n \Delta_0\qquad\mbox{for}\qquad 
          q:= \frac{\eta \left(1- \frac{\eta}{2\eps}\right) } {2\gamma_\eps\max(2\|f_{0}\oplus g_{0} - f_*\oplus g_*\|_\infty , \eps)}.
     \end{align*}
     Moreover,
     \begin{align*}
         \|f_n\oplus g_n - f_*\oplus g_*\|_{L^2(P\otimes Q)}^2
         \leq \frac{2\gamma_\eps^2\max(2\|f_{0}\oplus g_{0} - f_*\oplus g_*\|_\infty , \eps)^2}{\eta \left(1- \frac{\eta}{2\eps}\right)}\,\Delta_0\,(1-q)^n.
     \end{align*}
     
\end{corollary}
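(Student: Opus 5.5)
The proof has three steps. First, a sup-norm bound on the iterates, which keeps the local PL constant of \cref{th:PL-Gamma} uniform in $n$. Second, a sufficient-increase estimate for each half-step. Third, a comparison of the two partial gradients with the full gradient ${\rm D}\Gamma(f_n,g_n)$. Combining these with the PL inequality~\eqref{eq:PLforGamma} and the error bound gives the two claimed rates.

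\emph{Step 1: sup-norm monotonicity~\eqref{eq:ExplicitIterateBound}.} Write $M_n:=\max(\|f_n-f_*\|_\infty,\|g_n-g_*\|_\infty)$. The first-order condition~\eqref{eq:FOC} gives $1=\frac1\eps\int(f_*(x)+g_*(y)-c)_+\,dQ(y)$, so
\[
f_{n+1}(x)-f_*(x)=(f_n-f_*)(x)-\frac{\eta}{\eps}\int\Big[(f_n(x)+g_n(y)-c)_+-(f_*(x)+g_*(y)-c)_+\Big]dQ(y).
\]
Because $t\mapsto t_+$ is monotone and $1$-Lipschitz, the bracket equals $\theta(x,y)\big((f_n-f_*)(x)+(g_n-g_*)(y)\big)$ for some measurable $\theta\in[0,1]$. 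With $a(x):=\int\theta(x,y)\,dQ(y)\in[0,1]$ this yields
\[
f_{n+1}(x)-f_*(x)=\Big(1-\tfrac{\eta}{\eps}a(x)\Big)(f_n-f_*)(x)-\tfrac{\eta}{\eps}\int\theta(x,y)(g_n-g_*)(y)\,dQ(y).
\]
Since $\eta<\eps$, the coefficients are nonnegative and their absolute values sum to at most $1$, so $\|f_{n+1}-f_*\|_\infty\le M_n$. The same argument for the $g$-update, which uses $f_{n+1}$ and the second line of~\eqref{eq:FOC}, gives $\|g_{n+1}-g_*\|_\infty\le\max(\|f_{n+1}-f_*\|_\infty,\|g_n-g_*\|_\infty)\le M_n$.

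It follows that $\|f_n\oplus g_n-f_*\oplus g_*\|_\infty\le 2M_0$ for all $n$. To replace $M_0$ by $\|f_0\oplus g_0-f_*\oplus g_*\|_\infty$, I use the freedom to pass to the potentials $(f_*+a,g_*-a)$. This changes neither $f_*\oplus g_*$ nor the iterates. I choose $a$ to be the midpoint of the range of $f_0-f_*$. Then $\|f_0-f_*-a\|_\infty$ is half the oscillation of $f_0-f_*$, and $\|g_0-g_*+a\|_\infty$ is at most $\|(f_0-f_*)\oplus(g_0-g_*)\|_\infty$. Hence $M_0\le\|f_0\oplus g_0-f_*\oplus g_*\|_\infty$ for this choice of potentials.

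\emph{Step 2: ascent per half-step.} The map $f\mapsto{\rm D}_1\Gamma(f,g)$ is $\frac1\eps$-Lipschitz on $L^2(P)$, since $(\cdot)_+$ is $1$-Lipschitz and Jensen's inequality applies to the $Q$-integral; symmetrically for ${\rm D}_2\Gamma$ in $g$ (cf.\ \cref{le:lipschitzgradient}). The standard descent lemma applied to the concave function $\Gamma$ then gives
\[
\Gamma(f_{n+1},g_n)-\Gamma(f_n,g_n)\ge\eta\Big(1-\tfrac{\eta}{2\eps}\Big)\|{\rm D}_1\Gamma(f_n,g_n)\|^2,
\]
\[
\Gamma(f_{n+1},g_{n+1})-\Gamma(f_{n+1},g_n)\ge\eta\Big(1-\tfrac{\eta}{2\eps}\Big)\|{\rm D}_2\Gamma(f_{n+1},g_n)\|^2.
\]

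\emph{Step 3: comparison with the full gradient.} This is the only delicate point. By the same Lipschitz property, now in the first argument of ${\rm D}_2\Gamma$,
\[
\|{\rm D}_2\Gamma(f_n,g_n)\|\le\|{\rm D}_2\Gamma(f_{n+1},g_n)\|+\tfrac{1}{\eps}\|f_{n+1}-f_n\|=\|{\rm D}_2\Gamma(f_{n+1},g_n)\|+\tfrac{\eta}{\eps}\|{\rm D}_1\Gamma(f_n,g_n)\|.
\]
Squaring via $(s+t)^2\le 2s^2+2t^2$ gives
\[
\|{\rm D}\Gamma(f_n,g_n)\|^2\le\Big(1+\tfrac{2\eta^2}{\eps^2}\Big)\|{\rm D}_1\Gamma(f_n,g_n)\|^2+2\|{\rm D}_2\Gamma(f_{n+1},g_n)\|^2,
\]
and the right-hand side is at most $2\big(\|{\rm D}_1\Gamma(f_n,g_n)\|^2+\|{\rm D}_2\Gamma(f_{n+1},g_n)\|^2\big)$ precisely when $\eta\le\eps/\sqrt2$.

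\emph{Conclusion.} Summing the two inequalities of Step 2 and using Step 3 gives
\[
\Delta_n-\Delta_{n+1}\ge\tfrac{\eta}{2}\Big(1-\tfrac{\eta}{2\eps}\Big)\|{\rm D}\Gamma(f_n,g_n)\|^2.
\]
By Step 1, the factor $\max(\|f_n\oplus g_n-f_*\oplus g_*\|_\infty,\eps)$ in~\eqref{eq:PLforGamma} is at most $\max(2\|f_0\oplus g_0-f_*\oplus g_*\|_\infty,\eps)$. The PL inequality~\eqref{eq:PLforGamma} therefore yields $\Delta_n-\Delta_{n+1}\ge q\Delta_n$, and iterating gives $\Delta_n\le(1-q)^n\Delta_0$.

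For the $L^2$ statement, I square the error bound of \cref{th:PL-Gamma} with the same uniform constant. I then bound the gradient using the displayed inequality and $\Delta_{n+1}\ge0$:
\[
\|{\rm D}\Gamma(f_n,g_n)\|^2\le\frac{2\Delta_n}{\eta\big(1-\frac{\eta}{2\eps}\big)}.
\]
Inserting $\Delta_n\le(1-q)^n\Delta_0$ gives the claimed estimate for $\|f_n\oplus g_n-f_*\oplus g_*\|_{L^2(P\otimes Q)}^2$.
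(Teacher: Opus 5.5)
Your proposal is correct and follows the paper's proof essentially step for step. It has the same four stages: sup-norm monotonicity via a convex-combination bound, the half-step descent lemma with factor $\eta(1-\frac{\eta}{2\eps})$, the comparison $\|{\rm D}_2\Gamma(f_n,g_n)\|^2\le 2\|{\rm D}_2\Gamma(f_{n+1},g_n)\|^2+\frac{2\eta^2}{\eps^2}\|{\rm D}_1\Gamma(f_n,g_n)\|^2$ (where $\eta\le\eps/\sqrt2$ enters), and then the PL inequality and error bound.

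The only differences are cosmetic. Your two-sided $\theta$-representation is a compact version of the paper's two one-sided estimates. Your explicit midpoint choice of the translation constant replaces the paper's appeal to \cref{rk:sumNorm} and gives the same constant $2\|f_0\oplus g_0-f_*\oplus g_*\|_\infty$.
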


\section{Proof of the PL inequality}
\label{Section-Proof-of-PL}
Consider the linear subspace
$$
  \Hil = \big\{u\oplus v: (u,v)\in L^{2}(P)\times L^{2}(Q)\big\} \subset  L^{2}(P\otimes Q),
$$
which is closed by the argument in \cite[p.\,370]{RuschendorfThomsen.93}. Hence, $\Hil$ is a Hilbert space with the induced inner product $\langle \cdot,\cdot\rangle_{\Hil} = \langle \cdot,\cdot\rangle_{L^{2}(P\otimes Q)}$. We define the operator $\Phi: \Hil \to \RR$ by $\Phi(f \oplus g) = \Gamma(f,g)$, that is,
\begin{align}\label{eq:Phi}
     \Phi(f \oplus g) := \int \left(f(x) + g(y) -\frac{1}{2\eps} \left(f(x)+ g(y)- c(x,y)\right)_+^{2}\right) d(  P \otimes   Q )(x,y).
\end{align}
Working with $\Phi$ and $\Hil$ will be convenient as the direct sum removes the non-uniqueness of the potentials. The relation between the gradients of $\Phi$ and $\Gamma$ will be detailed in \cref{le:translationPhiToGamma}.

We fix $(f,g)\in L^\infty(\Omega)\times L^\infty(\Omega')$ in addition to the direct sum $f_* \oplus g_*$ of the potentials, and denote by $\phi$ the function 
$$ [0,1]\ni t\mapsto \phi(t)= - \Phi \left( f_{t} \oplus g_{t} \right), \quad \mbox{where} \quad f_{t} \oplus g_{t} := \left( (1-t)f_*+  tf \right) \oplus \left( (1-t)g_*+  tg \right).$$  
Note that $\phi$ is non-decreasing, convex, $\mathcal{C}^{1,1}$ and attains its minimum at $0$. Moreover, 
\begin{align}\label{eq:phiFirstDeriv}
\phi'(t)
= \int  ((f_*-f) \oplus (g_*-g)) \left(1- \frac{1}{\eps}(f_t\oplus g_t -c)_+\right) d(P\otimes Q),
\end{align}
and for a.e.~$t\in [0,1]$, 
\begin{align}\label{eq:phiSecondDeriv}
\phi''(t)= \frac{1}{\eps}\int_{f_t\oplus g_t\geq c}  ((f_*-f) \oplus (g_*-g))^2  d(P\otimes Q).
\end{align}
Here the differentiation under the integral can be justified by combining the proof of~\cite[Theorem~2.27]{Folland.99} and the a.e.\ differentiability of $s\mapsto (s)_+$.  
In particular, we have the expansion
\begin{align}
   \phi(t)&= \phi(0)+ \int_{0}^t \phi'(s) ds 
   =\phi(0)+ t\phi'(0)+  \int_{0}^t \int_0^s \phi''(r) d r ds \nonumber\\
   &=  \phi(0)+ \int_{0}^t \int_0^s \phi''(r) d r ds, \label{development-Phi}
\end{align}
where in the last step, we used~\eqref{eq:phiFirstDeriv} and~\eqref{eq:primal-dual} and the fact that $\pi_\eps\in\Pi(P,Q)$ yields
\begin{align*}
\phi'(0)
= \int  ((f_*-f) \oplus (g_*-g)) [d(P\otimes Q) - d\pi_\eps]=0.
\end{align*}   
In what follows, we set
\begin{align}\label{eq:def-C-r0}
C_{f,g}:=\| f_*\oplus g_* - f\oplus g\|_\infty, \qquad r_0:=\min\bigl(\tfrac{\eps}{2C_{f,g}},1\bigr)
\end{align}
and show in \cref{Theorem:Bound-double} that $\|(f_*-f) \oplus (g_*-g)\|_{L^2(P\otimes Q)}^2 \leq \frac{\eps}{\beta_\eps}\phi''(r)$ for a.e.\  $r\in[0,r_0]$, for a certain constant~$\beta_\eps$. Note that $r_0$ depends on $f,g$ only through $C_{f,g}$; this will be important below to obtain an inequality for arbitrary $f,g$. We mention that the choice of such~$r_0$ is inspired by an argument of~\cite{Stromme.24} in the context of entropic statistical optimal transport. Once \cref{Theorem:Bound-double} is shown, the following theorem will be deduced via~\eqref{eq:phiSecondDeriv}.

\begin{theorem}[PL inequality for $\Phi$]\label{th:PL-Phi} 
    The dual objective $\Phi:\Hil\to\R$ satisfies the error bound   
    $$ \|f\oplus g-f_*\oplus g_*\|_{L^2(P\otimes Q)} \leq \gamma_\eps  \max\left(\|f\oplus g-f_*\oplus g_*\|_{\infty},\eps\right)  \|{\rm D}\Phi(f \oplus g)\|_{L^2(P\otimes Q)}$$
    and the 
    PL inequality
    \begin{align}\label{eq:PLforPhi}
    \|{\rm D}\Phi(f \oplus g)\|_{L^2(P\otimes Q)}^2 \geq \frac{1}{\gamma_\eps  \max\left(\|f\oplus g-f_*\oplus g_*\|_{\infty},\eps\right)} \big(\QOT_\eps(P,Q)- \Phi(f \oplus g) \big)
    \end{align}
    for all $(f,g)\in L^\infty(\Omega)\times L^\infty(\Omega')$, where
    $$ \gamma_\eps= 16 \left(\delta_P^{-1}\max\left( \frac{8L}{\eps},1\right)^d\right) \frac{\Lambda_P^2}{\lambda_P^2}
\frac{\bigl(\lceil 8L\,{\rm diam}(\Omega)/\eps \rceil\bigr)^{d+2}}{\inf_{y\in\Omega'} Q\bigl(B_{\frac{\eps}{8L}}(y)\bigr)}.$$
\end{theorem}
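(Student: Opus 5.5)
Write $h:=(f_*-f)\oplus(g_*-g)\in\Hil$, so that $\phi(1)-\phi(0)=\QOT_\eps(P,Q)-\Phi(f\oplus g)$. The plan has three steps: prove a lower curvature bound for $\phi''$ on $[0,r_0]$; turn it into a lower bound for $\phi'(1)$ using convexity of $\phi$; then read off the error bound and the PL inequality from Cauchy--Schwarz and convexity.

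\textbf{Curvature bound (main obstacle).} The key input is the announced bound: for a.e.\ $r\in[0,r_0]$,
\[
\|h\|_{L^2(P\otimes Q)}^2\le \tfrac{\eps}{\beta_\eps}\,\phi''(r), \qquad\text{i.e.}\qquad \int_{\{f_r\oplus g_r\ge c\}} h^2\,d(P\otimes Q)\ge \beta_\eps\|h\|^2_{L^2(P\otimes Q)} .
\]
Since $r\le r_0\le \eps/(2C_{f,g})$, we have $|f_r\oplus g_r-f_*\oplus g_*|\le \eps/2$. Hence $\{f_r\oplus g_r\ge c\}\supset S:=\{f_*\oplus g_*-c\ge \eps/2\}$.

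We then need coercivity of $u\oplus v\mapsto\int_S (u\oplus v)^2$ on $\Hil$. First, \eqref{eq:FOC} together with the $2L$-Lipschitz continuity of $f_*\oplus g_*-c$ shows that for every $x$ there is $y$ with $f_*(x)+g_*(y)-c(x,y)\ge\eps$. Consequently $S$ contains a product of balls $B_{\eps/(8L)}(x)\times B_{\eps/(8L)}(y)$ around every such pair. On such a product rectangle, $\int (u\oplus v)^2\,dP\,dQ$ controls both the oscillation of $u$ on the $x$-ball and that of $v$ on the $y$-ball; this uses $(a+b)^2$ integrated over a product, which bounds the variances of $u$ and $v$ separately.

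Next, cover $\Omega$ by a chain of $N=\lceil 8L\,\mathrm{diam}(\Omega)/\eps\rceil$ overlapping balls, using convexity and the ball lower bound $\delta_P$ from \cref{rk:coneCondition}. Propagating the local control along the chain removes the degeneracy of translations $u+a$, $v-a$ and yields $\|u\oplus v\|^2_{L^2}\le \beta_\eps^{-1}\int_S(u\oplus v)^2$, with $\beta_\eps^{-1}$ of order $\delta_P^{-1}\max(8L/\eps,1)^d\,(\Lambda_P/\lambda_P)^2 N^{d+2}/\inf_y Q(B_{\eps/(8L)}(y))$. The telescoping along chains gives the factors of $N$, and the density ratio compares $P$ across balls. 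This chaining and spectral estimate is the hard part; I would first attempt it via pairwise comparison of local averages across overlapping rectangles.

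\textbf{From curvature to the gradient.} Because $\phi'$ is nondecreasing, $\phi'(0)=0$ and $\phi''\ge \beta_\eps\|h\|^2/\eps$ on $[0,r_0]$, we get
\[
\phi'(1)\ge\phi'(r_0)\ge \frac{r_0\beta_\eps}{\eps}\|h\|^2 .
\]
Moreover $\phi'(1)=\langle h,{\rm D}\Phi(f\oplus g)\rangle\le \|h\|\,\|{\rm D}\Phi\|$. Combining these with $r_0^{-1}=\max(2C_{f,g}/\eps,1)$ gives
\[
\|h\|\le \frac{\eps}{r_0\beta_\eps}\|{\rm D}\Phi\| \le \frac{2}{\beta_\eps}\max(C_{f,g},\eps)\,\|{\rm D}\Phi\|,
\]
which is the error bound with $\gamma_\eps=2/\beta_\eps$, up to the constant bookkeeping that yields the factor $16$.

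\textbf{PL inequality.} By convexity of $\phi$,
\[
\phi(1)-\phi(0)\le\phi'(1)\le\|h\|\,\|{\rm D}\Phi\|\le \gamma_\eps\max(C_{f,g},\eps)\,\|{\rm D}\Phi\|^2 .
\]
Since $\phi(1)-\phi(0)=\QOT_\eps(P,Q)-\Phi(f\oplus g)$, this is \eqref{eq:PLforPhi}.
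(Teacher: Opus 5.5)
Your reduction is correct and follows the paper: $\phi'(0)=0$, monotonicity of $\phi'$, Cauchy--Schwarz against ${\rm D}\Phi$, and convexity for the PL step. Integrating $\phi''$ only once, $\phi'(r_0)\ge r_0\beta_\eps\|h\|^2/\eps$, is a factor $2$ sharper than the paper, which integrates twice and then uses $\phi(r_0)-\phi(0)\le r_0\phi'(1)$. With the paper's $\beta_\eps=\kappa\alpha/4$ ($\kappa,\alpha$ as in \cref{Theorem:Bound-double}) you would get $8\kappa^{-1}\alpha^{-1}\le\gamma_\eps=16\kappa^{-1}\alpha^{-1}$.

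The genuine gap is the curvature bound itself, $\int_{\{f_r\oplus g_r\ge c\}}(u\oplus v)^2\,d(P\otimes Q)\ge\beta_\eps\|u\oplus v\|^2_{L^2(P\otimes Q)}$ for $r\le r_0$. It carries all the content of the theorem and fixes $\gamma_\eps$, and your sketch does not deliver it.

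There is no translation degeneracy left in $\Hil$: after centering $\int u\,dP=0$ you must bound $\|u\|^2_{L^2(P)}+\|v\|^2_{L^2(Q)}$. Chaining over $\Omega$ can at best control $\Var_P(u)$. Local oscillation of $v$ on balls $B_{\eps/(8L)}(y(x))$ says nothing about $\|v\|_{L^2(Q)}$ when $\Omega'$ is disconnected or $Q$ is discrete, which is the semi-discrete case the theorem covers.

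Even for $u$, summing rectangle contributions over a cover overcounts and introduces covering and multiplicity constants that are not in $\gamma_\eps$. The paper avoids this differently. It writes $\min_s\int_{\cT^\eps_y}(u+s)^2dP=\frac{1}{2P(\cT^\eps_y)}\iint_{\cT^\eps_y\times\cT^\eps_y}(u(x)-u(z))^2\,dP(x)\,dP(z)$. It then \emph{restricts} the domain of integration to $z\in B_{\varrho}(x)$, $y\in B_{\varrho}(y(x))$ with $\varrho=\eps/(8L)$, so nothing is counted twice. Finally it applies the nonlocal Poincar\'e inequality of \cref{Lemma:Bound-energy}, proved by a change of variables along segments using convexity of $\Omega$. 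This is what produces exactly $\lceil 8L\,{\rm diam}(\Omega)/\eps\rceil^{d+2}\Lambda_P^2/(\lambda_P^2 q_0)$ with $q_0=\inf_{y\in\Omega'}Q(B_{\eps/(8L)}(y))$.

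The missing idea for $v$ concerns the sections $\cT_{r,y}$. For \emph{every} $y\in\Omega'$, not only $y$ near some $y(x)$, the section $\cT_{r,y}$ contains $B_{\eps/(8L)}(x(y))\cap\Omega$, where $x(y)$ maximizes $f_*(\cdot)+g_*(y)-c(\cdot,y)$. Hence $P(\cT_{r,y})\ge\kappa=\delta_P\min(\eps/(8L),1)^d$. This ties $v(y)$ to $-u$ on a set of definite $P$-mass, and it is where $\delta_P$ enters $\gamma_\eps$, not in covering $\Omega$ as you suggest.

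The paper exploits this spectrally. By \cref{lemma:self-adj}, the bottom of the spectrum of $\mathbb{M}$ is attained as an eigenvalue when it lies below $\beta_\eps$. The eigen-equation $\lambda_0 v=vP(\cT_{r,\cdot})+\int_{\cT_{r,\cdot}}u\,dP$ then gives $\|v\|^2_{L^2(Q)}\le\frac{16}{9\kappa}\|u\|^2_{L^2(P)}$.

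An elementary substitute also works. Let $I$ be the left-hand side of the curvature bound. Applying $(a+b)^2\ge\frac12 b^2-a^2$ on each section gives $I\ge\frac{\kappa}{2}\|v\|^2_{L^2(Q)}-\|u\|^2_{L^2(P)}$. Combined with $I\ge\alpha\|u\|^2_{L^2(P)}$ from \cref{lemma:PL-forVariance}, this yields $I\ge\frac{\kappa\alpha}{5}\|u\oplus v\|^2$. Together with your sharper derivative step, this gives the error bound with $10\kappa^{-1}\alpha^{-1}\le\gamma_\eps$.

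As written, your proposal supplies neither the $u$-estimate nor the $v$-estimate. So the stated inequalities with the stated $\gamma_\eps$ are not established.
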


In this statement, ${\rm D}\Phi(f \oplus g)\in\Hil$ denotes the $\Hil$-gradient of $\Phi$. Its explicit form is detailed in \cref{le:translationPhiToGamma} below.

\subsection{Coordinate-wise coercivity of $\phi''$}
With a view toward the second derivative~\eqref{eq:phiSecondDeriv}, the goal of this subsection is the coercivity of the nonnegative bilinear form  $$ (u\oplus v,  \tilde{u}\oplus \tilde{v})\mapsto \int_{f_r\oplus g_r\geq c}  (u\oplus v)(\tilde{u}\oplus \tilde{v})  d(P\otimes Q)$$ with respect to the first coordinate, for sufficiently small $r\in[0,1]$.
\begin{lemma}[Coordinate-wise coercivity]\label{lemma:PL-forVariance}
    Let $r_0$ be as in~\eqref{eq:def-C-r0}. For all $r\leq r_0$ and $(u,v)\in L^{2}(P)\times L^2(Q)$, 
$$
\int_{f_r\oplus g_r\geq c}  (u\oplus v)^2  d(P\otimes Q) 
\geq 
\alpha
\,\Var_P(u), \quad \alpha := \frac{\lambda_P^2}{\Lambda_P^2}
\frac{\inf_{y\in\Omega'} Q\bigl(B_{\frac{\eps}{8L}}(y)\bigr)}{\bigl(\lceil 8L\,{\rm diam}(\Omega)/\eps \rceil\bigr)^{d+2}},
$$
where $\Var_P(u) = \int [u(z)-\int u(x)\, dP(x)]^2\,dP(z)$ denotes the variance of~$u$ under $P$.
   \end{lemma}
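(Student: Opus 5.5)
Fix $r\le r_0$, set $h:=f_r\oplus g_r-f_*\oplus g_*=r\,(f\oplus g-f_*\oplus g_*)$, and note that $\|h\|_\infty\le rC_{f,g}\le \eps/2$ by the choice of $r_0$ in~\eqref{eq:def-C-r0}. Hence
$$\{f_r\oplus g_r\ge c\}\supseteq \cS_{\eps/2}:=\{(x,y): f_*(x)+g_*(y)-c(x,y)\ge \eps/2\},$$
and it suffices to prove the lower bound with the integral restricted to $\cS_{\eps/2}$, a set defined purely through the potentials. The plan is to show that $\cS_{\eps/2}$ contains many ``product boxes'' $B\times B'$ with $B$ a small ball in $\Omega$ and $B'$ a small ball around a point of $\Omega'$. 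On such a box, the function $u(x)+v(y)$ restricted to $B\times B'$ controls the oscillation of $u$ on $B$. These local variance bounds will then be chained across $\Omega$ using convexity.

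\emph{Step 1: geometry of the support.} For each $y\in\Omega'$, the first-order condition~\eqref{eq:FOC} gives $\int (f_*(x)+g_*(y)-c(x,y))_+dP(x)=\eps$. The integrand is at most $\max_x(f_*(x)+g_*(y)-c(x,y))_+$, so there is $x_y\in\Omega$ with $f_*(x_y)+g_*(y)-c(x_y,y)\ge\eps$. Symmetrically, for each $x$ there is $y_x$ with the same property. Since $f_*,g_*,c$ are all $L$-Lipschitz, the function $f_*\oplus g_*-c$ is $2L$-Lipschitz in each variable, so it stays $\ge\eps/2$ on $B_{\eps/(8L)}(x_0)\times B_{\eps/(8L)}(y_{x_0})$ for every $x_0\in\Omega$. (The radius $\eps/8L$ leaves slack, consistent with $\alpha$.) So every small ball $B\subset\Omega$ of radius $\eps/(8L)$ has a partner ball $B'$ in $y$-space with $B\times B'\subset\cS_{\eps/2}$ and $Q(B')\ge \inf_{y}Q(B_{\eps/(8L)}(y))=:q_\eps$.

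\emph{Step 2: local variance bound.} On a box $B\times B'$ I use the elementary inequality
$$\int_{B\times B'}(u(x)+v(y))^2\,dP\,dQ\ \ge\ Q(B')\int_B\big(u-\textstyle\fint_B u\,dP\big)^2dP,$$
obtained by minimizing over the constant $v(y)$ for each fixed $y$. Thus $\int_{\cS_{\eps/2}}(u\oplus v)^2\ge q_\eps \Var_{P|_B}$-type local oscillation of $u$, for every ball of radius $\eps/(8L)$.

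\emph{Step 3: chaining (main obstacle).} I partition $\Omega$ (via a cubic grid of mesh $\sim\eps/(8L)$, giving $N\le\lceil 8L\,\diam(\Omega)/\eps\rceil^d$ cells) into pieces each contained in such a ball. The global variance must then be bounded by local oscillations plus differences of cell means. Two cells are adjacent when some ball of radius $\eps/(8L)$ meets both in sets of substantial $P$-mass. For such cells, the difference of their means is controlled by the local oscillation on the covering ball, using $\lambda_P\le\rho\le\Lambda_P$ and \cref{rk:coneCondition}. This is the source of the factor $\lambda_P^2/\Lambda_P^2$.

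Because $\Omega$ is convex, any two cells can be joined by a straight-line path of at most $\lceil 8L\,\diam(\Omega)/\eps\rceil$ adjacent cells. A Cauchy--Schwarz telescoping bound then gives $(m_i-m_j)^2\le K\sum_{\text{path}}(\Delta m)^2$. Summing over pairs via $\Var_P(u)=\sum_i P_i\int_{\text{cell}_i}(u-m_i)^2/P_i+\tfrac12\sum_{i,j}P_iP_j(m_i-m_j)^2$ and counting how often each edge is used should cost the remaining factor $K^{2}$. This yields the exponent $d+2$, and with $q_\eps$ from Steps 1--2 it gives $\alpha$.

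The delicate part is this bookkeeping. It requires keeping the covering balls overlapping so that neighbouring means are genuinely comparable, and counting edge multiplicities along discrete lines so that the power is exactly $d+2$ rather than something worse.
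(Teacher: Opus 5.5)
\paragraph{Gap in Step~3.} Your Steps~1 and~2 are correct and close to the paper's opening. You restrict to $\{f_*\oplus g_*-c\ge \eps/2\}$, minimize over the constant $v(y)$ for each fixed $y$, and use \eqref{eq:FOC} with the $2L$-Lipschitz bound to attach a partner ball of radius $\varrho=\eps/(8L)$ to each $x$.

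The gap is Step~3, where the lemma is actually proved. It is only sketched, and the discrete scheme you describe cannot deliver the stated constant $\alpha$. Concretely:
(i) Step~2 gives $\cI\ge q_\eps\cdot(\text{oscillation on one box})$ separately for each box. To add these over all cells \emph{and} all straddling balls, the boxes must have bounded overlap in $\Omega\times\Omega'$, and you pay the multiplicity.
(ii) To apply Step~1 to a cell, the cell must fit inside a ball of radius $\varrho$ centred at a point of $\Omega$. This forces mesh $\lesssim\varrho/\sqrt d$, so the number of cells is of order $(\sqrt d\,m)^d$, not $m^d$, where $m:=\lceil {\rm diam}(\Omega)/\varrho\rceil$.
(iii) Comparing neighbouring means needs a lower bound on the $P$-mass of the cells. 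Grid cells meeting $\partial\Omega$ can have arbitrarily small mass, and \cref{rk:coneCondition} only controls balls centred in $\Omega$. Using it would also put $\delta_P$ into the constant, whereas $\alpha$ contains no $\delta_P$.
(iv) The exponent $d+2$ is asserted (``should cost the remaining factor $K^2$''), not derived.

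So even if carried out, your route gives a coercivity estimate with some other $d$- and $\Omega$-dependent constant. That would suffice qualitatively for \cref{Theorem:Bound-double}, but it is not the lemma as stated.

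\paragraph{How the paper avoids the bookkeeping.} It uses two devices.

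First, it minimizes over the whole section and drops $1/P(\cT^\eps_y)\ge 1$:
$\cI\ge\frac12\int_{\Omega'}\int_{\cT^\eps_y}\int_{\cT^\eps_y}(u(x)-u(z))^2\,dP(x)\,dP(z)\,dQ(y)$.
It then takes a measurable selector $y(x)$ and the pointwise inequality $\mathbf{1}_{B_\varrho(y(x))}(y)\mathbf{1}_{B_\varrho(x)}(z)\le \mathbf{1}_{\cT^\eps_y}(x)\mathbf{1}_{\cT^\eps_y}(z)\mathbf{1}_{B_\varrho(y(x))}(y)$. Integrating out $y$ inside a single integral gives $\cI\ge \frac{q_\eps}{2}\int_\Omega\int_{B_\varrho(x)}(u(x)-u(z))^2\,dP(z)\,dP(x)\ge\frac{q_\eps\lambda_P^2}{2}\mathcal{E}_\varrho(u)$, with no overlap counting at all.

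Second, the chaining is continuous (\cref{Lemma:Bound-energy}). Split the segment from $x$ to $z$ into $m$ equal steps; Cauchy--Schwarz costs a factor $m$, and there are $m$ increment terms. Each term is handled by the linear change of variables $(x,z)\mapsto\bigl(x+\frac{k+1}{m}(z-x),\,x+\frac{k}{m}(z-x)\bigr)$, which has Jacobian $m^{-d}$. By convexity it maps $\Omega\times\Omega$ into pairs of points of $\Omega$ at distance at most $\varrho$, so each term is bounded by $m^d\mathcal{E}_\varrho(u)$. This gives exactly $m^{d+2}$. Converting between Lebesgue measure and $P$ at the two ends yields $\lambda_P^2/\Lambda_P^2$ and $2\Var_P(u)$, hence precisely $\alpha$.
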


The proof of \cref{lemma:PL-forVariance} uses the following auxiliary result, which is inspired by the Bourgain–Brezis–Mironescu limits for Sobolev energies (e.g., \cite[Theorem~1]{BourgainBrezisMironescu.01}).
   \begin{lemma}\label{Lemma:Bound-energy}
       For any convex and bounded set $\Omega\subset\R^d$, $\varrho>0$ and $h\in L^2(\Omega)$, the operator
       $$ \mathcal{E}_\varrho (h) := \int_\Omega \int_{B_\varrho(x) \cap \Omega }  (h(x)-h(z))^2  dz dx$$
       satisfies
       $$ \int_\Omega \int_\Omega (h(x)-h(z))^2  d x dz \leq  \left(\left\lceil{{\rm diam}(\Omega)/\varrho} \right\rceil \right)^{d+2} \mathcal{E}_\varrho (h).$$
   \end{lemma}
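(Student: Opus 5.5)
The plan is a chaining argument. We connect two arbitrary points $x,z\in\Omega$ by a path of short steps, each of length less than $\varrho$, and then apply Cauchy--Schwarz along the path.

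\textbf{Path and pointwise bound.} Set $N:=\lceil{\rm diam}(\Omega)/\varrho\rceil$. For $x,z\in\Omega$ and $k=0,\dots,N$, define the points $w_k:=x+\tfrac{k}{N}(z-x)$ on the segment from $x$ to $z$. These lie in $\Omega$ by convexity. Consecutive points satisfy $|w_{k+1}-w_k|=|z-x|/N\le\varrho$. The inequality is not strict, so I will either replace $\varrho$ by a slightly smaller value or take $N$ one step finer; in the worst case this only affects the boundary case, and a limiting argument handles it. By Cauchy--Schwarz,
\[
(h(x)-h(z))^2\le N\sum_{k=0}^{N-1}(h(w_k)-h(w_{k+1}))^2 .
\]

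\textbf{Integration.} Next integrate over $(x,z)\in\Omega^2$ and treat each $k$ separately. For fixed $k$, change variables from $(x,z)$ to $(a,b)=(w_k,w_{k+1})$. This is a linear map of $\R^{2d}$. Solving gives $z-x=N(b-a)$ and $x=a-k(b-a)$, so the inverse map $(a,b)\mapsto(x,z)$ has Jacobian determinant $N^d$ in absolute value.

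The image satisfies $a,b\in\Omega$ and $|a-b|\le\varrho$, and each point $(a,b)$ has at most one preimage. Therefore
\[
\int_\Omega\int_\Omega (h(w_k)-h(w_{k+1}))^2\,dx\,dz\le N^d\int_\Omega\int_{B_\varrho(a)\cap\Omega}(h(a)-h(b))^2\,db\,da=N^d\mathcal E_\varrho(h).
\]

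\textbf{Conclusion.} Summing over the $N$ values of $k$ and multiplying by the Cauchy--Schwarz factor $N$ gives the bound $N\cdot N\cdot N^d\,\mathcal E_\varrho(h)=N^{d+2}\mathcal E_\varrho(h)$, which is the claim.

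\textbf{Main obstacle.} The only delicate points are two. First, the Jacobian must be computed correctly; a direct determinant computation confirms $N^d$. Second, the closed versus open ball issue at $|a-b|=\varrho$. Here I would use $N'=N+1$ if needed, or note that the event $|z-x|={\rm diam}(\Omega)$ exactly with $N\varrho={\rm diam}(\Omega)$ forces equality only on a Lebesgue-null set of pairs. Indeed, $|a-b|=\varrho$ requires $|x-z|=N\varrho\ge{\rm diam}(\Omega)$, which is a null set in $\Omega^2$. Hence the open ball suffices.

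Finally, for $h\in L^2$ we should justify evaluating $h$ along the path. Since everything is stated as integrals of measurable functions under linear changes of variables, this is fine by Fubini/Tonelli, with no need for pointwise values.
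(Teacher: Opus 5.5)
Your proposal is correct and matches the paper's proof step for step: chaining along the segment with $N=\lceil \diam(\Omega)/\varrho\rceil$ steps, the Cauchy--Schwarz factor $N$, the linear change of variables with Jacobian factor $N^d$, and convexity to keep $(w_k,w_{k+1})$ in $\Omega\times\Omega$ with $|w_k-w_{k+1}|\le\varrho$. Your null-set argument for the boundary case $|a-b|=\varrho$ is the right fix; the paper passes over this point silently. Drop the fallback of taking $N+1$ steps or shrinking $\varrho$, though: when $\diam(\Omega)/\varrho$ is an integer, either option would spoil the constant $N^{d+2}$.
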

   \begin{proof} 
       Fix $x,z\in\Omega$ and set $x_k= x+ \frac{k}{m}(z-x)$ where $m=\left\lceil{{\rm diam}(\Omega)/\varrho} \right\rceil$. Then 
    \begin{align*}
        (h(x)-h(z))^2 & =   \left(\sum_{k=0}^{m-1} h(x_{k})-h(x_{k+1})\right)^2
        \leq m  \sum_{k=0}^{m-1} \left( h(x_{k+1})-h(x_{k})\right)^2
    \end{align*}
    and integrating this inequality gives
    \begin{multline}\label{eq:proof-bound-energy}
        \int_\Omega \int_\Omega (h(x)-h(z))^2  dz dx  \\
        \leq m\sum_{k=0}^{m-1} \underbrace{ \int_\Omega \int_\Omega \left( h\left( x+ \frac{k+1}{m}(z-x)\right)-h\left( x+ \frac{k}{m}(z-x)\right)\right)^2 dz dx }_{=:I_k} .
    \end{multline}
For fixed $k$, we use the change of variables
$$(u,v)=T(x,z)= \left(x+ \frac{k+1}{m}(z-x), x+ \frac{k}{m}(z-x)\right)$$ 
with
    \begin{align*}
{\rm D}T(x,z) &= A \otimes \mathbbm{I}_d,
\quad
A := \frac{1}{m}
\begin{pmatrix}
m-k-1 & k+1\\
m-k   & k
\end{pmatrix},\\
\det {\rm D}T(x,z)
&= \det(A\otimes \mathbbm{I}_d)
= \det(A)^d
= \left(-\frac{1}{m}\right)^d
\end{align*}
to get 
$$ I_k = m^{d} \int_{ T(\Omega\times \Omega)} (h(u)-h(v))^2 du dv.$$
We have $T(\Omega\times \Omega)\subset \Omega\times \Omega$ by convexity of $\Omega$. Noting that $(u,v)=T(x,z)$ satisfy $u-v=(z-x)/m$, we also have $\|u-v\|\leq {\rm diam}(\Omega)/m\leq \varrho$ for all $x,z\in\Omega$. As a  consequence, 
$ I_k\leq m^{d} \mathcal{E}_\varrho (h)$ and thus~\eqref{eq:proof-bound-energy} yields
$$ \int_\Omega \int_\Omega (h(x)-h(z))^2  dz dx \leq m^{d+2} \mathcal{E}_\varrho (h) $$
as claimed.
\end{proof}

We can now show \cref{lemma:PL-forVariance}. 

\begin{proof}[Proof of \cref{lemma:PL-forVariance}]
  Fix $r\leq r_0$ and $(u,v)\in L^2(P)\times L^2(Q)$.
  For each $y\in\Omega'$ we define the section $\cT^\eps_{y}= \{ x: f_*(x)+g_*(y)-c(x,y)\geq  \frac{\eps}{2}\}$ of the set $\{f_*\oplus g_*-c\geq \frac{\eps}{2}\}$. As $r\leq r_0$ implies $\{f_r\oplus g_r\geq c\}\supset\{f_*\oplus g_*-c\geq \frac{\eps}{2}\}$, we have
  \begin{align*}
    \cI:=\int_{f_r\oplus g_r\geq c}  (u\oplus v)^2  d(P\otimes Q)
    & \geq  \int_{f_*\oplus g_*-c\geq \frac{\eps}{2}}  (u\oplus v)^2  d(P\otimes Q)\\
    &= \int_{\Omega'} \int_{\cT^\eps_y}   (u(x)+v(y))^2  dP(x) dQ(y)\\
    &\geq \int_{\Omega'}  \min_{s\in \R}\int_{\cT^\eps_y}   (u(x)+s)^2  dP(x) dQ(y).
  \end{align*}
For fixed~$y$, the minimality property of the variance of $u$ under the probability measure $P(dx)/P(\cT^\eps_y)$ yields
\[
\min_{s \in \mathbb R} \int_{\cT^\eps_y} (u(x)+s)^2 \, dP(x)
= \frac{1}{2 P(\cT^\eps_y)}
   \int_{\cT^\eps_y} \int_{\cT^\eps_y} (u(x)-u(z))^2 \, dP(x)dP(z).
\]
Using also $P(\cT^\eps_y)\leq1$,
we conclude that 
\begin{align}\label{eq:proof_for_variance_step1}
    \cI
    & \geq \frac{1}{2}\int_{\Omega'} \int_{\cT^\eps_y} \int_{\cT^\eps_y}   (u(x)-u(z))^2  dP(x) dP(z) dQ(y).
\end{align}

Choose a measurable selector
\[
\Omega\ni x \mapsto y(x) \in \argmax_{y\in\Omega'} \,(f_*(x)+g_*(y)-c(x,y) )_+
\]
and set
\[
m_x = \max_{y\in\Omega'}\,(f_*(x)+g_*(y)-c(x,y))_+.
\]
Recalling~\eqref{eq:FOC}, we then have
\[
m_x \geq \int_{\Omega'} (f_*(x)+g_*(y)-c(x,y))_+ \, dQ(y) = \eps.
\]
Since $f_*$, $g_*$ and $c$ are $L$-Lipschitz, we deduce
\begin{align*}
f_*(z)+g_*(y)-c(z,y)
&\ge f_*(x)+g_*(y(x))-c(x,y(x))
      - 2L\|x-z\| - 2L\|y(x)-y\| \\
&= m_x - 2L\|x-z\| - 2L\|y(x)-y\| \\
&\ge \eps - 2L\|x-z\| - 2L\|y(x)-y\|
\end{align*}
and hence
\[
f_*(z)+g_*(y)-c(z,y) - \frac{\eps}{2}
\ge
\frac{\eps}{2} - 2L\|x-z\| - 2L\|y(x)-y\|.
\]
Let $\varrho := \frac{\eps}{8L}$ and $x\in\Omega$.  
If $z\in B_\varrho(x)$ and $y\in B_\varrho(y(x))$, then
\begin{align}\label{eq:forIndicatorRel}
f_*(z)+g_*(y)-c(z,y) - \frac{\eps}{2}
\ge
\frac{\eps}{2} - 2L\varrho - 2L\varrho
= \frac{\eps}{2} - \frac{\eps}{4} - \frac{\eps}{4}
= 0,
\end{align}
meaning that $z\in \cT^\eps_y$. This applies in particular to $z=x$, so $x\in\cT^\eps_y$.
Equivalently, for all $x,z\in\Omega$ and $y\in\Omega'$, we have
\[
\mathbf 1_{B_\varrho(y(x))}(y)\,\mathbf 1_{B_\varrho(x)}(z)
\le
\mathbf 1_{\cT^\eps_y}(x)\,\mathbf 1_{\cT^\eps_y}(z)\,
\mathbf 1_{B_\varrho(y(x))}(y).
\]
Applying this in~\eqref{eq:proof_for_variance_step1} gives
\begin{align*}
\cI
&\ge
\frac12 \int_{\Omega'} \int_{\Omega}\int_{\Omega}
\mathbf 1_{\cT^\eps_y}(x)\,\mathbf 1_{\cT^\eps_y}(z)\,
      (u(x)-u(z))^2 \, dP(x)\,dP(z)\,dQ(y) \\
&\ge
\frac12 \int_{\Omega'} \int_{\Omega}\int_{\Omega}
\mathbf 1_{B_\varrho(y(x))}(y)\,\mathbf 1_{B_\varrho(x)}(z)\,
      (u(x)-u(z))^2 \, dP(x)\,dP(z)\,dQ(y) \\
&=\frac12 \int_{\Omega} \left[
     Q\bigl(B_\varrho(y(x))\bigr)
     \int_{B_\varrho(x)} (u(x)-u(z))^2 \, dP(z)
\right] dP(x) \\
&\ge
\frac12 \Bigl(\,\inf_{y\in\Omega'} Q(B_\varrho(y))\Bigr)
       \int_{\Omega}\int_{B_\varrho(x)} (u(x)-u(z))^2 \, dP(z)\,dP(x).
\end{align*}
Recall from \cref{assumption:marginals} that the density $\rho$ of $P$ satisfies $\lambda_P\leq \rho \leq\Lambda_P$ on~$\Omega$, so that 
\[
\int_{\Omega}\int_{B_{\varrho}(x)} (u(x)-u(z))^2 \, dP(z)\,dP(x)
\ge
\lambda_P^2 \, \mathcal E_{\varrho}(u).
\]
By Lemma~\ref{Lemma:Bound-energy} applied with $\varrho=\frac{\eps}{8L}$,
\[
\mathcal E_{\varrho}(u)
\ge
\frac{1}{\bigl(\lceil {\rm diam}(\Omega)/\varrho \rceil\bigr)^{d+2}}
\int_\Omega \int_\Omega (u(x)-u(z))^2\,dx\,dz.
\]
Moreover,
\[
\int_{\Omega}\int_{\Omega} (u(x)-u(z))^2 \, dx\,dz
\ge
\frac{1}{\Lambda_P^2} \int_{\Omega}\int_{\Omega} (u(x)-u(z))^2 \, dP(x)\,dP(z)
= \frac{2}{\Lambda_P^2} \Var_P(u).
\]
Combining the last four displays,
\begin{align*}
\cI
&\ge
\Bigl(\inf_{y\in\Omega'} Q(B_{\varrho}(y))\Bigr)
\frac{\lambda_P^2}{\Lambda_P^2}
\frac{1}{\bigl(\lceil {\rm diam}(\Omega)/\varrho \rceil\bigr)^{d+2}}
\,\Var_P(u),
\end{align*}
which gives the claim after recalling that $\varrho = \frac{\eps}{8L}$.
\end{proof}

\begin{remark}\label{rk:ballContained}
  For a given $y\in\Omega'$, applying the argument around~\eqref{eq:forIndicatorRel} with $x$ replaced by $x(y) \in \argmax_{x\in\Omega} (f_*(x)+g_*(y)-c(x,y) )_+$ and $y(x)$ replaced by $y$ yields that $z\in\cT^\eps_y$ for any $z\in B_\varrho(x(y))\cap\Omega$. That is, $(B_\varrho(x(y))\cap\Omega)\subset\cT^\eps_y$. Symmetrically, we have for any $x\in\Omega$ that $(B_\varrho(y(x))\cap\Omega')\subset\cS^\eps_{x}$ for the $x$-section $\cS^\eps_{x}:=\{ y: f_*(x)+g_*(y)-c(x,y)\geq \frac{\eps}{2}\}$. Recalling  that $\varrho = \frac{\eps}{8L}$, we have in particular
  \begin{align}\label{eq:ballContained}
    \inf_{y\in\Omega'}Q\bigl(B_{\frac{\eps}{8L}}(y)\bigr)\leq Q(\cS^\eps_{x}), \qquad 
    \inf_{x\in\Omega}P\bigl(B_{\frac{\eps}{8L}}(x)\bigr)\leq P(\cT^\eps_{y}).
  \end{align}
\end{remark}

\subsection{Uniform coercivity of $\phi''$} 
Fix $r\in[0,1]$. By the Riesz representation theorem, there exists a unique bounded linear operator $\mathbb{M}:\Hil\to\Hil$ such that    
\begin{align}\label{eq:M}
\langle \tilde{u}\oplus \tilde{v},  \mathbb{M} (u\oplus v)\rangle_{\Hil}=    \int_{f_r\oplus g_r\geq c}  (\tilde{u}\oplus \tilde{v})(u\oplus v)  d(P\otimes Q)
\end{align}
for all $(u\oplus v),(\tilde{u}\oplus \tilde{v})\in\Hil$.
Denote by $\cS_{r,x}$ and $\cT_{r,y}$ the sections of the set $\cR_r:=\{f_r\oplus g_r\geq c\}$,
$$
\cS_{r,x} =\{y\in\Omega':  f_r(x)+ g_r(y)\geq c(x,y)\}, \qquad \cT_{r,y} =\{x\in\Omega:  f_r(x)+ g_r(y)\geq c(x,y)\}.
$$
Moreover, define 
$$
  m(u\oplus v) := \int_{f_r\oplus g_r\geq c} u\oplus v \,d(P\otimes Q) \;\in\R.
$$
\begin{lemma}\label{le:M-explicit}
The operator $\mathbb{M}:\Hil\to\Hil$ has the explicit representation
\begin{align}\label{eq:M-explicit}
\mathbb{M} (u\oplus v) = \mathbb{M}_{1}(u\oplus v) \oplus \mathbb{M}_{2}(u\oplus v),
\end{align}
where 
\begin{align*}
  \mathbb{M}_{1}(u\oplus v) &:= u Q(\cS_{r,(\cdot)}) + \int_{\cS_{r,(\cdot)}} v\,dQ - \frac{m(u\oplus v)}{2} \quad \in L^2(P),\\
  \mathbb{M}_{2}(u\oplus v) &:= v P(\cT_{r,(\cdot)}) + \int_{\cT_{r,(\cdot)}} u\,dP- \frac{m(u\oplus v)}{2} \quad\in L^2(Q).
\end{align*}
\end{lemma}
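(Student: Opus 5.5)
The plan is a direct verification. First, the proposed right-hand side of~\eqref{eq:M-explicit} is a legitimate element of~$\Hil$. Then it satisfies the defining identity~\eqref{eq:M}, and uniqueness in the Riesz representation finishes the proof.

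For membership in $\Hil$: since $0\le Q(\cS_{r,x})\le 1$, the function $x\mapsto u(x)Q(\cS_{r,x})$ lies in $L^2(P)$. By Cauchy--Schwarz, $\bigl(\int_{\cS_{r,x}} v\,dQ\bigr)^2\le \int v^2\,dQ$, so $x\mapsto\int_{\cS_{r,x}}v\,dQ$ is in $L^2(P)$. Measurability follows from Fubini, since $\cR_r$ is a measurable set; $f_r,g_r$ are measurable and $c$ is continuous. The constant $m(u\oplus v)/2$ is finite because $u\oplus v\in L^2\subset L^1$. The same reasoning applies to $\mathbb{M}_2$. Hence $\mathbb{M}_1\oplus\mathbb{M}_2\in\Hil$. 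The splitting of the constant $m$ into two halves is only a normalization: any split $a+b=m$ gives the same direct sum, so the representation is well defined as an element of $\Hil$.

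For the identity, fix $\tilde u\oplus\tilde v\in\Hil$. Expand the right side of~\eqref{eq:M} as
\[
\int_{\cR_r}(\tilde u\oplus\tilde v)(u\oplus v)\,d(P\otimes Q)=\int_{\cR_r}\bigl(\tilde u u+\tilde u v+\tilde v u+\tilde v v\bigr)\,d(P\otimes Q).
\]
By Fubini, the terms $\tilde u u$ and $\tilde u v$ integrate first in $y$ over $\cS_{r,x}$. This gives $\int\tilde u\,[uQ(\cS_{r,\cdot})+\int_{\cS_{r,\cdot}}v\,dQ]\,dP$. Similarly, the terms $\tilde v v$ and $\tilde v u$ integrate first in $x$ over $\cT_{r,y}$, giving $\int\tilde v\,[vP(\cT_{r,\cdot})+\int_{\cT_{r,\cdot}}u\,dP]\,dQ$.

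Now compute the left side, $\langle\tilde u\oplus\tilde v,\mathbb{M}_1\oplus\mathbb{M}_2\rangle_{L^2(P\otimes Q)}$. Expanding the product gives
\[
\int\tilde u\,\mathbb{M}_1\,dP+\int\tilde v\,\mathbb{M}_2\,dQ+\int\tilde u\,dP\int\mathbb{M}_2\,dQ+\int\tilde v\,dQ\int\mathbb{M}_1\,dP.
\]
The first two terms reproduce the Fubini expression above, except for the constant corrections $-\tfrac{m}{2}\bigl(\int\tilde u\,dP+\int\tilde v\,dQ\bigr)$. So the remaining task is to show that the cross terms cancel these corrections.

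By Fubini, $\int\bigl[uQ(\cS_{r,\cdot})+\int_{\cS_{r,\cdot}}v\,dQ\bigr]dP=m(u\oplus v)$. Hence $\int\mathbb{M}_1\,dP=m-\tfrac m2=\tfrac m2$, and likewise $\int\mathbb{M}_2\,dQ=\tfrac m2$. The cross terms therefore equal $\tfrac m2\bigl(\int\tilde u\,dP+\int\tilde v\,dQ\bigr)$, which cancels the corrections exactly and yields~\eqref{eq:M}. Uniqueness in the Riesz theorem then gives~\eqref{eq:M-explicit}.

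The only point needing care is this constant bookkeeping, because the product structure of $L^2(P\otimes Q)$ couples the two coordinates; everything else is Fubini.
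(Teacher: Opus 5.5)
Your proof is correct and is essentially the paper's argument. Both check, via Fubini, that the candidate lies in $\Hil$ and satisfies the characterizing orthogonality identity, and then conclude by uniqueness. The only difference is packaging: the paper observes that $\mathbb{M}w$ is the orthogonal projection of $\mathbf{1}_{\cR_r}w$ onto $\Hil$ and proves the general formula $\proj h = h_P\oplus h_Q-\bar h$, which it reuses later, e.g.\ for the gradient relation in \cref{le:translationPhiToGamma}.
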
 

\begin{proof}
    The definition~\eqref{eq:M} can be stated as
    \[
    \langle \tilde{w}, \mathbb{M} w \rangle_{L^2(P\otimes Q)}
    = \langle \tilde{w}, \mathbf{1}_{\cR_r} w \rangle_{L^2(P\otimes Q)}
\quad\text{for all }\tilde{w},w\in\Hil.
    \]
    This implies that $\mathbb{M} w$ is the orthogonal projection of $\mathbf{1}_{\cR_r} w \in L^2(P\otimes Q)$ onto the subspace $\Hil\subset L^2(P\otimes Q)$. For arbitrary $h\in L^2(P\otimes Q)$, define
\begin{align*}
h_P(x) &:= \int h(x,y)\,dQ(y), \qquad x\in\Omega,\\
h_Q(y) &:= \int h(x,y)\,dP(x), \qquad y\in\Omega',\\
\bar h &:= \int h(x,y)\,d(P\otimes Q)(x,y).
\end{align*}
We verify below that the orthogonal projection of $h\in L^2(P\otimes Q)$ onto~$\Hil$ is
\[
\proj h := h_P\oplus h_Q - \bar h. 
\]
Specializing to $h:=\mathbf{1}_{\cR_r}(u\oplus v)$, we have 
\[
\proj h = \left(h_P-\frac12\bar h\right)\oplus \left(h_Q-\frac12\bar h\right) =\mathbb{M}_1(u\oplus v)\oplus \mathbb{M}_2(u\oplus v),
\]
showing the claim.

It remains to prove that $\proj h$ is indeed the orthogonal projection. Given $h\in L^2(P\otimes Q)$, clearly $h_P\in L^2(P)$, $h_Q\in L^2(Q)$ and $\bar h\in\R$, so that $\proj h\in\Hil$.  To complete the proof, we need to show for all $\tilde u\oplus\tilde v\in\Hil$ that
\begin{align*}
0
&= \big\langle \tilde u\oplus\tilde v,\;h - \proj h\big\rangle_{L^2(P\otimes Q)}\\
&= \int_{\Omega\times\Omega'} (\tilde u(x)+\tilde v(y))
\big(h(x,y) - h_P(x) - h_Q(y) + \bar h\big)\,dP(x)dQ(y)
=: I_1 + I_2,
\end{align*}
where 
\begin{align*}
I_1
&:= \int \tilde u(x)\big(h(x,y) - h_P(x) - h_Q(y) + \bar h\big)\,dP(x)dQ(y)
\end{align*}
and $I_2$ is defined analogously with $\tilde v$ instead of $\tilde u$. Next, we show that $I_1=I_2=0$. 
By Fubini's theorem,
\[
\int \tilde u(x)h(x,y)\,dP(x)dQ(y)
= \int \tilde u(x)\left(\int h(x,y)\,dQ(y)\right)dP(x)
= \int \tilde u(x)h_P(x)\,dP(x).
\]
Moreover,
\begin{align*}
\int \tilde u(x)h_Q(y)\,dP(x)dQ(y)
&= \left(\int \tilde u(x)\,dP(x)\right)\left(\int h_Q(y)\,dQ(y)\right)
= \bar h\int \tilde u(x)\,dP(x).
\end{align*}
Expanding $I_1$ into four integrals, this shows that the first two and the last two integrals cancel, so that $I_1=0$. Similarly, $I_2=0$, completing the proof.
\end{proof}

The next lemma is an important technical step for the derivation of our main result. While $\mathbb{M}$ is not compact, the lemma uses the explicit form of $\mathbb{M}$ to establish that the minimum Rayleigh quotient is an eigenvalue. The proof technique is adapted from \cite[Proposition~4.1]{GonzalezSanzNutzRiveros.25}.

\begin{lemma}\label{lemma:self-adj}
    For any $r\in[0,1]$, the operator $\mathbb{M}:\Hil\to\Hil$ is bounded, self-adjoint and positive. Let
    \begin{equation}\label{eq:lambda0defn} \lambda_0 := \inf_{ \|u\oplus v\|_{\Hil}=1}\langle u\oplus v,  \mathbb{M} (u\oplus v)\rangle_{\Hil}
    \end{equation}
    and suppose there exists a constant $\kappa_{0}>0$ such that 
\begin{align}\label{eq:greater-lambda}
  Q(\cS_{r,(\cdot)})- \lambda_0 \geq \kappa_{0} \quad P\text{-a.s.}, \qquad P(\cT_{r,(\cdot)})- \lambda_0 \geq \kappa_{0} \quad Q\text{-a.s.}
\end{align} 
		Then $\lambda_0$ is an eigenvalue of~$\mathbb{M}$. As a consequence, the infimum in~\eqref{eq:lambda0defn} is attained and $\lambda_0$ is the smallest eigenvalue of~$\mathbb{M}$.
\end{lemma}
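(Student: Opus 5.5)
Boundedness, self-adjointness and positivity follow from the definition \eqref{eq:M}. The form $(w,\tilde w)\mapsto\int \1_{\cR_r} w\tilde w\,d(P\otimes Q)$ is symmetric, nonnegative and bounded by $\|w\|_{\Hil}\|\tilde w\|_{\Hil}$. Equivalently, by \cref{le:M-explicit}, $\mathbb{M}$ is the orthogonal projection onto $\Hil$ composed with multiplication by $\1_{\cR_r}$, so $\|\mathbb{M}\|\le 1$. In particular $\lambda_0\in[0,1]$ is the bottom of the spectrum of the self-adjoint operator $\mathbb{M}$.

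For the eigenvalue claim, I take a minimizing sequence $w_n=u_n\oplus v_n$ with $\|w_n\|_{\Hil}=1$ and $\langle w_n,\mathbb{M}w_n\rangle\to\lambda_0$. Standard spectral theory (positivity of $\mathbb{M}-\lambda_0$ together with Cauchy--Schwarz for the form $\langle\cdot,(\mathbb{M}-\lambda_0)\cdot\rangle$) gives $\|(\mathbb{M}-\lambda_0)w_n\|_{\Hil}\to0$. I normalize $\int u_n\,dP=0$ by shifting the constant into $v_n$. Then $\|w_n\|_{\Hil}^2=\|u_n\|_{L^2(P)}^2+\|v_n\|_{L^2(Q)}^2$, so $(u_n,v_n)$ is bounded and, along a subsequence, converges weakly to some $(u,v)$.

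Using \eqref{eq:M-explicit}, the equation $(\mathbb{M}-\lambda_0)w_n=o(1)$ splits into coordinates up to a constant:
\[
(Q(\cS_{r,\cdot})-\lambda_0)\,u_n=-\int_{\cS_{r,\cdot}}v_n\,dQ+\tfrac{m(w_n)}{2}+a_n+o_{L^2(P)}(1),
\]
with an analogous identity for $v_n$, where $a_n$ are real constants. The operators $v\mapsto\int_{\cS_{r,x}}v\,dQ$ and $u\mapsto\int_{\cT_{r,y}}u\,dP$ have kernels $\1_{\cR_r}$ in $L^2(P\otimes Q)$, so they are Hilbert--Schmidt and hence compact. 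The constants $m(w_n)$ and $a_n$ are bounded, so a subsequence converges. Hence the right-hand sides converge strongly in $L^2$.

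By \eqref{eq:greater-lambda}, the multiplier $Q(\cS_{r,\cdot})-\lambda_0\ge\kappa_0$ is invertible with bounded inverse. Dividing by it shows that $u_n$ converges strongly in $L^2(P)$, and likewise $v_n$ in $L^2(Q)$. Thus $w_n\to w$ strongly in $\Hil$ with $\|w\|_{\Hil}=1$. Continuity then gives $\langle w,\mathbb{M}w\rangle=\lambda_0$ and $(\mathbb{M}-\lambda_0)w=0$. So $\lambda_0$ is an eigenvalue, the infimum is attained at $w$, and $\lambda_0$ is the smallest eigenvalue because any eigenvalue is a Rayleigh quotient and hence at least $\lambda_0$.

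The main obstacle is the bookkeeping of the constants that make the decomposition $u\oplus v$ non-unique. One must check that the normalized sequence is bounded and that the residual constants ($m(w_n)/2$ and the shift $a_n$) stay bounded. The residual splits between the two coordinates only modulo constants, so I would handle this by projecting both sides onto mean-zero parts in $L^2(P)$ and tracking the scalar component separately. That scalar lives in a one-dimensional space, where compactness is trivial.
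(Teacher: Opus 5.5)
Your proof is correct and follows essentially the same route as the paper: an approximate-eigenvector sequence, weak compactness, compactness of the integral operators with kernel $\1_{\cR_r}$, and inversion of the multiplier $Q(\cS_{r,\cdot})-\lambda_0\ge\kappa_0$ to upgrade weak to strong convergence. The differences are only in bookkeeping. The paper gets $\|(\mathbb{M}-\lambda_0)w_n\|\to0$ through $\mathbb{L}=\Id-\mathbb{M}$ and its operator norm rather than Cauchy--Schwarz. It also uses the balanced normalization $\int u_n\,dP=\int v_n\,dQ$ together with $\int\mathbb{M}_1\,dP=\int\mathbb{M}_2\,dQ$, so the residual constants $a_n$ you track never appear.
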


\begin{proof}
\emph{Step~1.} We readily see from the definition~\eqref{eq:M} that $\mathbb{M}$ is bounded, self-adjoint and positive on the Hilbert space $\Hil$. For the remainder of the proof we  abbreviate $\lambda:=\lambda_0$, $\langle \cdot,\cdot\rangle=\langle \cdot,\cdot\rangle_{\Hil}$ and $\|\cdot\|=\|\cdot\|_{\Hil}$, whereas $\|\cdot\|_{\op}$ denotes the operator norm. Noting
\[
  0 \le \langle w,\mathbb{M} w\rangle
  = \int_{\{f_r\oplus g_r\ge c\}} w^2\,d(P\otimes Q) 
  \le \int w^2\,d(P\otimes Q)
  = \|w\|^{2},
\]
we have 
$
  0 \le \mathbb{M}\le \Id,
$
where $\Id$ denotes the identity. 
Define the linear operator $\mathbb{L} : \Hil\to \Hil$ by
\[
  \mathbb{L} := \Id - \mathbb{M}.
\]
Then $\mathbb{L}$ is also bounded, self-adjoint and positive, so that its operator norm can be expressed via the Rayleigh quotient as
\begin{align}\label{eq:sup-Rayleigh}
  \alpha^+ := \sup_{\|w\|=1} \langle \mathbb{L}w,w\rangle
  = \|\mathbb{L}\|_{\op}. %
\end{align}
For $w\in \Hil$ with $\|w\|=1$, clearly $\langle \mathbb{L}w,w\rangle=1-\langle \mathbb{M}w,w\rangle$, so that $\lambda=1-\alpha^{+}$.

\emph{Step~2.} By the definition of $\alpha^+$, there exists a sequence $w_n=u_n\oplus v_n\in \Hil$ with
$\|w_n\| = 1$ and $\langle \mathbb{L}w_n,w_n\rangle \to \alpha^+$.
As $\|\mathbb{L}\|_{\op} = \alpha^+$ implies
$\|\mathbb{L}w_n\| \le \alpha^+$ for all~$n$, we have
\begin{align}\label{eq:approx-eig-L}
  \|\lambda w_{n}-\mathbb{M} w_n\|^2 
  = \|(\mathbb{L}-\alpha^+\Id)w_n\|^2 
  &= \|\mathbb{L}w_n\|^2 + (\alpha^+)^2 
     - 2\alpha^+\langle \mathbb{L}w_n,w_n\rangle \nonumber\\
  &\le 2\alpha^+ \big(\alpha^+ - \langle \mathbb{L}w_n,w_n\rangle\big)
   \to 0.
\end{align}
Below, we prove that $w_n$ converges strongly to a limit $w\neq0$ (along a subsequence). Once that is shown, we have $\lambda w=\mathbb{M} w$ and $\|w\|=1$, completing the proof.

\emph{Step~3.} We may choose representatives $(u_{n},v_{n})\in L^{2}(P)\times L^{2}(Q)$ of $w_n=u_n\oplus v_n$ with $\int u_n\,dP = \int v_n\,dQ$. 
Recall the operators $\mathbb{M}_{1}$ and $\mathbb{M}_{2}$ with $\mathbb{M}(u\oplus v)=\mathbb{M}_{1}(u,v)\oplus \mathbb{M}_{2}(u,v)$ from \cref{le:M-explicit}. Fubini's theorem shows that this split gives a balanced normalization,
\begin{align*}%
  \int \mathbb{M}_{1}(u,v)\, dP= \int \mathbb{M}_{2}(u,v)\,dQ.
\end{align*} 
Note also that if $(f_{n},g_{n})_{n\geq0}\subset L^{2}(P)\times L^{2}(Q)$ satisfy $\int f_{n}\,dP= \int g_{n}\,dQ$ for $n\geq0$, then the convergence $f_{n}\oplus g_{n}\to f_{0}\oplus g_{0}$ in $L^{2}(P\otimes Q)$ already implies the separate convergences $f_{n}\to f_{0}$ in $L^{2}(P)$ and $g_{n}\to g_{0}$ in $L^{2}(Q)$, thanks to the fact that
\[
f_n(\cdot)
= \int f_{n}(\cdot)\oplus g_{n}(y)\, dQ(y)
   - \frac{1}{2}\int f_{n}\oplus g_{n} \, d(P \otimes Q)
\]
and analogously for $g_n$. As a consequence, \eqref{eq:approx-eig-L} and $\mathbb{M}(u\oplus v)=\mathbb{M}_{1}(u,v)\oplus \mathbb{M}_{2}(u,v)$ imply
\begin{align}
  \big\|
    u_n(\lambda - Q(\cS_{r,(\cdot)}))
      - \mathbb{A}_1 v_n +\tfrac12 m(u_n\oplus v_n)
  \big\|_{L^2(P)}
  &\to 0, \label{eq:weakstrong1}\\
  \big\|
    v_n(\lambda - P(\cT_{r,(\cdot)}))
      - \mathbb{A}_2 u_n +\tfrac12 m(u_n\oplus v_n)
  \big\|_{L^2(Q)}
  &\to 0, \nonumber%
\end{align}
where we have abbreviated the integral terms with the operators
\begin{align*}
  \mathbb{A}_1 &: L^2(Q)\to L^2(P),
  \qquad
  \mathbb{A}_1 v(x) := \int_{\cS_{r,x}} v(y)\,dQ(y), \\
  \mathbb{A}_2 &: L^2(P)\to L^2(Q),
  \qquad
  \mathbb{A}_2 u(y) := \int_{\cT_{r,y}} u(x)\,dP(x). 
\end{align*}
These are integral operators with kernels in $L^2(P\otimes Q)$ and hence compact (cf.\ \cite[Theorem~6.12]{Brezis2011}).

As the sequences $(u_n)\subset L^2(P)$ and $(v_n)\subset L^2(Q)$ are bounded, the Banach–Alaoglu theorem yields (after passing to another subsequence) the weak convergences
\[
  u_n \rightharpoonup u \ \text{ in }L^2(P),\qquad
  v_n \rightharpoonup v \ \text{ in }L^2(Q),
\]
for some $u\in L^2(P)$ and $v\in L^2(Q)$. By the compactness of $\mathbb{A}_1$ and $\mathbb{A}_2$, this implies 
\[
  \mathbb{A}_1 v_n \to \mathbb{A}_1 v \ \text{ in }L^2(P),
  \qquad
  \mathbb{A}_2 u_n \to \mathbb{A}_2 u \ \text{ in }L^2(Q).
\]
Moreover, $m$ is clearly weakly continuous, so that $m(u_n\oplus v_n)\to m(u\oplus v)$. 
We can now revisit~\eqref{eq:weakstrong1}: as $\mathbb{A}_1 v_n \to \mathbb{A}_1 v$ strongly and $m(u_n\oplus v_n)\to m(u\oplus v)$ are constants, $u_n(\lambda - Q(\cS_{r,(\cdot)}))$ must also converge strongly. Multiplication with $1/(\lambda - Q(\cS_{r,(\cdot)}))$ is a bounded operator thanks to the uniform bound~\eqref{eq:greater-lambda}, so that $u_{n}$ also converges strongly; the limit must coincide with the weak limit~$u$. Similarly, $v_n \to v$. As noted after~\eqref{eq:approx-eig-L}, this completes the proof.
\end{proof}

Now we show the main result of this section. 

\begin{theorem}[Uniform coercivity of $\mathbb{M} $] \label{Theorem:Bound-double} Let $r_0$ be as in~\eqref{eq:def-C-r0} and $\delta_P$ as in~\cref{rk:coneCondition}. Set
$$\beta_\eps :=  \frac14 \kappa \alpha, \quad \text{where} \quad \kappa :=\delta_P\min\left( \frac{\eps}{8L},1\right)^d \leq 1,\quad 
\alpha := \frac{\lambda_P^2}{\Lambda_P^2}
\frac{\inf_{y\in\Omega'} Q\bigl(B_{\frac{\eps}{8L}}(y)\bigr)}{\bigl(\lceil 8L\,{\rm diam}(\Omega)/\eps \rceil\bigr)^{d+2}}  \leq 1.$$
Then for all $r\leq r_0$ and $(u,v)\in L^{2}(P)\times L^2(Q)$, the operator $\mathbb{M}=\mathbb{M}(r)$ of \eqref{eq:M} satisfies
\begin{equation}\label{eq:Mr-coercive}
\langle u\oplus v,  \mathbb{M} (u\oplus v)\rangle_{L^2(P\otimes Q)} \geq  \beta_\eps\|u\oplus v\|_{L^2(P\otimes Q)}^2, %
\end{equation}
and as a consequence, for a.e.~$r\leq r_0$,
    \begin{align}\label{eq:Bound-double-takeaway}
    \|(f_*-f) \oplus (g_*-g)\|_{L^2(P\otimes Q)}^2 \leq \frac{\eps}{\beta_\eps}\phi''(r).
    \end{align}
\end{theorem}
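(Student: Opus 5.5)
The plan is a direct argument that combines \cref{lemma:PL-forVariance}, which controls the $u$-part of $u\oplus v$, with a pointwise bound along the $y$-sections, which controls the $v$-part. This route avoids the spectral \cref{lemma:self-adj} entirely. If the constants turned out too tight, the fallback would be to use \cref{lemma:self-adj} and argue on an eigenvector; the hypothesis~\eqref{eq:greater-lambda} can be checked using the section bounds in Step 1 below.

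\emph{Step 1 (sections).} Fix $r\le r_0$. As in the proof of \cref{lemma:PL-forVariance}, $r\le r_0$ gives $\cR_r\supset\{f_*\oplus g_*-c\ge\eps/2\}$, so $\cT_{r,y}\supset\cT^\eps_y$. By \cref{rk:ballContained}, \cref{rk:coneCondition}(a) and $\varrho=\eps/(8L)$,
\[
P(\cT_{r,y})\ \ge\ P(\cT^\eps_y)\ \ge\ \inf_{x\in\Omega} P\bigl(B_{\eps/(8L)}(x)\bigr)\ \ge\ \delta_P\min\left(\tfrac{\eps}{8L},1\right)^d=\kappa \qquad\text{for all } y\in\Omega'.
\]

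\emph{Step 2 (normalization and two lower bounds).} Given $w=u\oplus v$, shift a constant from $u$ to $v$ so that $\int u\,dP=0$. Then $u\oplus v$ is orthogonal decomposition-friendly:
\[
\|w\|^2_{L^2(P\otimes Q)}=\|u\|_{L^2(P)}^2+\|v\|_{L^2(Q)}^2, \qquad \Var_P(u)=\|u\|^2_{L^2(P)}.
\]
The first lower bound is \cref{lemma:PL-forVariance}:
\[
\langle w,\mathbb{M}w\rangle\ \ge\ \alpha\|u\|^2_{L^2(P)}.
\]
For the second, fix $y$. Using $(a+b)^2\ge(1-s)b^2-(s^{-1}-1)a^2$ with $s=\tfrac12$ on $\cT_{r,y}$, we get
\[
\int_{\cT_{r,y}}(u+v(y))^2\,dP\ \ge\ \tfrac12 P(\cT_{r,y})\,v(y)^2-\int u^2\,dP .
\]
Integrating in $Q$ and using Step 1 gives
\[
\langle w,\mathbb{M}w\rangle\ \ge\ \tfrac{\kappa}{2}\|v\|^2_{L^2(Q)}-\|u\|^2_{L^2(P)} .
\]

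\emph{Step 3 (convex combination).} Take the combination of the two bounds with weight $t=\alpha/4$ on the second one:
\[
\langle w,\mathbb{M}w\rangle\ \ge\ \Bigl(\bigl(1-\tfrac{\alpha}{4}\bigr)\alpha-\tfrac{\alpha}{4}\Bigr)\|u\|^2+\tfrac{\alpha\kappa}{8}\|v\|^2\ \ge\ \tfrac{\alpha}{2}\|u\|^2+\tfrac{\alpha\kappa}{8}\|v\|^2 ,
\]
where the last step uses $\alpha\le1$. This yields coercivity with constant $\alpha\kappa/8$. To reach the stated $\beta_\eps=\kappa\alpha/4$, I would tune $s$ and $t$ jointly, for instance $s$ close to $1$ with $t$ proportional to $\alpha(1-s)$, using $\kappa,\alpha\le 1$. \textbf{This bookkeeping of constants is the step I expect to be the main obstacle}, since the structure of the argument is already settled. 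Since the final constant $\gamma_\eps$ in \cref{th:PL-Phi} only depends on $\beta_\eps$, a loss of a factor $2$ would only affect the numerical prefactor. If the factor cannot be recovered, I would switch to the eigenvector route mentioned above.

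\emph{Step 4 (consequence).} Apply~\eqref{eq:Mr-coercive} with $u=f_*-f$ and $v=g_*-g$; these lie in $L^2$ since $f,g,f_*,g_*$ are bounded. By~\eqref{eq:phiSecondDeriv} and~\eqref{eq:M}, for a.e.\ $r$,
\[
\phi''(r)=\tfrac1\eps\langle w,\mathbb{M}(r)w\rangle\ \ge\ \tfrac{\beta_\eps}{\eps}\|w\|^2 ,
\]
which is~\eqref{eq:Bound-double-takeaway}.
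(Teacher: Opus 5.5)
Your Steps 1, 2 and 4 are correct. Step 3 does establish \eqref{eq:Mr-coercive}, but only with constant $\alpha\kappa/8=\beta_\eps/2$. The statement fixes $\beta_\eps=\kappa\alpha/4$, and your claim that jointly tuning $s$ and $t$ recovers it is false. The loss is built into the Young splitting; it is not bookkeeping. Write $a=\|u\|_{L^2(P)}^2$, $b=\|v\|_{L^2(Q)}^2$, $a+b=1$. The supremum over $s\in(0,1)$ of your second bound $(1-s)\kappa b-(s^{-1}-1)a$ is $(\sqrt{\kappa b}-\sqrt a)_+^2$. Hence every choice of $(s,t)$ gives a coercivity constant at most
\[
\min_{a\in[0,1]}\max\Bigl\{\alpha a,\ \bigl(\sqrt{\kappa(1-a)}-\sqrt{a}\bigr)_+^2\Bigr\}=\frac{\alpha\kappa}{(1+\sqrt{\alpha})^2+\kappa}.
\]
This is strictly below $\alpha\kappa/4$ whenever $\kappa>(1-\sqrt{\alpha})(3+\sqrt{\alpha})$; for $\alpha,\kappa$ near $1$ it is about $1/5$ rather than $1/4$. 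Nothing in the hypotheses keeps $\alpha,\kappa$ away from $1$: take $d=1$, $P$ uniform on $[0,a_0]$ with $a_0\le1$ (so $\delta_P=1$ is admissible), $\Lambda_P/\lambda_P$ close to $1$, and $\eps$ large compared with $L$ and $L\,{\rm diam}(\Omega')$. So even optimally tuned, your route gives only $\alpha\kappa/((1+\sqrt{\alpha})^2+\kappa)\ge\tfrac45\beta_\eps$, not $\beta_\eps$. You are right that downstream only the prefactor of $\gamma_\eps$ would change, but as written the proposal does not prove the stated inequality.

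The missing idea is that, for fixed $u$, the worst $v$ is determined pointwise by $u$, and Young's inequality throws this away. The paper captures it through the spectral route. \cref{lemma:self-adj} turns the infimum into an eigenvalue $\lambda_0$. Its hypothesis \eqref{eq:greater-lambda} needs more than your Step 1: also the bound $Q(\cS_{r,x})\ge\inf_{y}Q(B_{\eps/(8L)}(y))\ge\alpha$ from \eqref{eq:ballContained}, and the reduction to the case $\lambda_0\le\beta_\eps$. The $v$-equation \eqref{eq:-for-v-eigenvalue} then gives $\|v\|_{L^2(Q)}^2\le\frac{16}{9\kappa}\|u\|_{L^2(P)}^2$, and with \cref{lemma:PL-forVariance} this yields $\lambda_0\ge\alpha\kappa/(\kappa+2)\ge\beta_\eps$.

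If you prefer to stay direct, replace Young by exact minimization in $v$. With $u$ centered as in your Step 2, set $\lambda:=\beta_\eps$, $p_y:=P(\cT_{r,y})\ge\kappa\ge4\lambda$ and $m_y:=\int_{\cT_{r,y}}u\,dP$. Then $\langle w,\mathbb{M}w\rangle-\lambda\|w\|^2=\int\bigl[\int_{\cT_{r,y}}u^2\,dP+2m_yv(y)+(p_y-\lambda)v(y)^2\bigr]dQ(y)-\lambda\|u\|^2$. The bracket is minimized at $v_0(y):=-m_y/(p_y-\lambda)$, so this quantity is at least its value at $w_0:=u\oplus v_0$. Cauchy--Schwarz gives $v_0^2\le\frac{16}{9\kappa}\|u\|^2$. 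Applying \cref{lemma:PL-forVariance} to $w_0$ then gives the lower bound $\bigl(\alpha-\lambda(1+\frac{16}{9\kappa})\bigr)\|u\|^2=\alpha\bigl(\frac{5}{9}-\frac{\kappa}{4}\bigr)\|u\|^2\ge0$. This is \eqref{eq:Mr-coercive} with the stated $\beta_\eps$, obtained without \cref{lemma:self-adj}.
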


\begin{proof}%
\emph{Step~1.} We first derive lower bounds on the sections $\cT_{r,y}$ and $\cS_{r,x}$ of $\{f_r\oplus g_r\ge c\}$. Recall from \cref{rk:ballContained} and from the beginning of the proof of \cref{lemma:PL-forVariance} that 
 $$B_{\frac{\eps}{8L}}(x(y))\cap \Omega \;\subset\; \cT^\eps_{y}:= \left\{ x: f_*(x) + g_*(y)-c(x,y)\geq  \tfrac{\eps}{2}  \right\}\;\subset\;  \cT_{r,y}.$$
In view of \cref{rk:coneCondition}, it follows that
 \begin{equation}\label{eq:Measure-ball}
     P(\cT_{r,y})\geq \delta_P\min\left( \frac{\eps}{8L},1\right)^d = \kappa \quad \text{for all $y\in \Omega'$}.
\end{equation}    
Analogously, $\cS^\eps_{x}:= \{ y: f_*(x) + g_*(y)-c(x,y)\geq  \frac{\eps}{2}\}\;\subset\;  \cS_{r,x}$, so that \cref{eq:ballContained} yields
 \begin{equation}\label{eq:Measure-ball2}
     Q(\cS_{r,x})\geq \inf_{y\in\Omega'}Q\bigl(B_{\frac{\eps}{8L}}(y)\bigr)=:q_0 \quad \text{for all $x\in \Omega$}.
\end{equation}    

\emph{Step~2.} Let $\lambda_0$ be as in \cref{lemma:self-adj}. If $\lambda_0 \geq \beta_{\eps}$, then~\eqref{eq:Mr-coercive} holds and we are done. We may thus assume that $\lambda_0 \leq \beta_{\eps}$. Noting that $\kappa,\alpha\leq1$ and $\alpha\leq q_0$, the definition of $\beta_{\eps}$, \cref{eq:Measure-ball,eq:Measure-ball2} then yield
 \begin{align}
     P(\cT_{r,y}) - \lambda_0 &\geq \tfrac34 P(\cT_{r,y}) \geq \tfrac34 \kappa >0 \quad \text{for all $y\in \Omega'$,} \label{eq:lowerBound-kappa}\\
     Q(\cS_{r,x}) - \lambda_0 &\geq \tfrac34 Q(\cS_{r,x}) \geq \tfrac34 q_0 >0 \quad \text{for all $x\in \Omega$}. \nonumber
\end{align} 
In particular, we may assume that~\eqref{eq:greater-lambda} holds. 

\emph{Step~3.} Hence, by \cref{lemma:self-adj}, it suffices to show that the minimal eigenvalue $\lambda_0$ of $\mathbb{M}$ satisfies $\lambda_0\geq\beta_\eps$. Let $w=u\oplus v\neq0$ be an eigenvector of $\mathbb{M}$ with associated eigenvalue $\lambda_0\geq 0$ and suppose for contradiction that 
\begin{align}\label{eq:forContrad}
    \lambda_0<\beta_\eps.
\end{align}
We may choose representatives $(u,v)\in L^{2}(P)\times L^2(Q)$ with the centering $\int u \, dP=0$. The eigenvalue equation $\lambda_0 (u\oplus v) = \mathbb{M} (u\oplus v)$, the centering of~$u$ and~\cref{le:M-explicit} imply that
$$ \lambda_0 u = u Q(\cS_{r,(\cdot)}) + \int_{\cS_{r,(\cdot)}}   v\, dQ -  \int \left(u Q(\cS_{r,(\cdot)}) + \int_{\cS_{r,(\cdot)}}   v dQ\right) dP  $$
 and 
 \begin{equation}
     \label{eq:-for-v-eigenvalue} \lambda_0 v = v P(\cT_{r,(\cdot)}) + \int_{\cT_{r,(\cdot)}}   u \,dP.
 \end{equation}
On the other hand, the eigenvalue equation, \cref{lemma:PL-forVariance} and the centering of $u$ yield
 \begin{align*}
     \lambda_0 \| u\oplus v\|_{L^2(P\otimes Q)}^2  &=\langle u\oplus v,  \mathbb{M} (u\oplus v)\rangle_{L^2(P\otimes Q)}\geq \alpha
\,\Var_P(u) = \alpha {\|u\|_{L^2(P)}^2}.
 \end{align*}
 The centering also implies $\| u\oplus v\|_{L^2(P\otimes Q)}^2= \|u\|_{L^2(P)}^2+ \|v\|_{L^2(Q)}^2 $, and we conclude that 
 \begin{equation}
     \label{eq:lambda-larger-than-alpha}
\lambda_0 \geq \alpha \frac{\|u\|_{L^2(P)}^2}{\|u\|_{L^2(P)}^2+ \|v\|_{L^2(Q)}^2} .\end{equation}
Next, we bound $\|v\|_{L^2(Q)}^2$. Note that~\cref{eq:-for-v-eigenvalue} yields the inequality
$$  |v|\leq  \frac{\int_{\cT_{r,(\cdot)}}   |u| dP}{|\lambda_0-P(\cT_{r,(\cdot)})|} . $$
Using~\eqref{eq:lowerBound-kappa}, we deduce
$$  |v|\leq  \frac{\int_{\cT_{r,(\cdot)}}   |u| dP}{P(\cT_{r,(\cdot)})-\lambda_0} \leq \frac{\int_{\cT_{r,(\cdot)}}   |u| dP}{ \frac34 P(\cT_{r,(\cdot)})}  . $$
Taking squares, applying Jensen's inequality for $P(dx)/P(\cT_{r,y})$, integrating with respect to~$Q$, and using \cref{eq:Measure-ball} again, we get
$$ \|v\|_{L^2(Q)}^2 
\leq \frac{16}{9}\int \frac{\int_{\cT_{r,(\cdot)}}   u^2 dP}{P(\cT_{r,(\cdot)})} dQ 
\leq  \frac{16}{9}\frac{\|u\|_{L^2(P)}^2}{\kappa}\leq 2\frac{\|u\|_{L^2(P)}^2}{\kappa}.$$
As $u\oplus v \neq 0$, this shows in particular that $u\neq0$. 
Therefore, substituting the last display into~\cref{eq:lambda-larger-than-alpha} and using $\kappa\leq1$ yields
$$
  \lambda_0 \geq \alpha \frac{\|u\|_{L^2(P)}^2}{\|u\|_{L^2(P)}^2+2\|u\|_{L^2(P)}^2/\kappa}=\alpha \frac{\kappa}{\kappa+2} \geq\frac13 \alpha \kappa \geq \beta_\eps,
$$
contradicting~\eqref{eq:forContrad} and completing the proof of~\eqref{eq:Mr-coercive}.

\emph{Step~4.} Finally, set $w_* := (f_*-f)\oplus(g_*-g)$; then~\eqref{eq:phiSecondDeriv} and~\eqref{eq:Mr-coercive} yield 
\[
\phi''(r)
= \frac{1}{\eps}
\int_{\{f_r\oplus g_r\ge c\}}
w_* ^2\,d(P\otimes Q)
= \frac{1}{\eps}\,\langle w_*,\mathbb M w_*\rangle_{L^2(P\otimes Q)} \geq \frac{\beta_\eps}{\eps}\|w_*\|_{L^2(P\otimes Q)}^2,
\]
which is \eqref{eq:Bound-double-takeaway}.
\end{proof}

\subsection{Conclusion of the proof of \cref{th:PL-Phi}}

We can now complete the proof of the error bound and the PL inequality for $\Phi$.

\begin{proof}[Proof of \cref{th:PL-Phi}]
Using~\cref{eq:Bound-double-takeaway} in~\cref{development-Phi} yields
$$
\phi(r_0)\ge \phi(0)+ \frac{r_0^2\beta_\eps}{2\eps}{\|(f_*-f) \oplus (g_*-g)\|_{L^2(P\otimes Q)}^2} . $$
On the other hand, the convexity of  $\phi$ and the resulting monotonicity of $\phi' $ show
$$ \phi(r_0) \leq  \phi(0)+ r_0 \phi'(r_0)\leq  \phi(0)+ r_0 \phi'(1).$$
Together, we obtain
\begin{multline*}
    \frac{r_0^2 \beta_\eps}{2\eps}{\|(f_*-f) \oplus (g_*-g)\|_{L^2(P\otimes Q)}^2} 
    \leq \phi(r_0)- \phi(0)
    \leq r_0 \phi'(1) \\
    = r_0 \langle {\rm D}\Phi(f \oplus g),f_*\oplus g_*-f\oplus g \rangle_{L^2(P\otimes Q)}
    \leq r_0 \|{\rm D}\Phi(f \oplus g)\|_{L^2(P\otimes Q)} \|(f_*-f) \oplus (g_*-g)\|_{L^2(P\otimes Q)}
\end{multline*}
and hence
\begin{align}\label{eq:provedFirstClaim}
    \|(f_*-f) \oplus (g_*-g)\|_{L^2(P\otimes Q)} 
    \leq \frac{2\eps}{r_0 \beta_\eps} \|{\rm D}\Phi(f \oplus g)\|_{L^2(P\otimes Q)}.
\end{align}
Noting that 
$$
  \frac{2\eps}{r_0}=\frac{2\eps}{\min\bigl(\tfrac{\eps}{2C_{f,g}},1\bigr)}=\frac{4}{\min(1/  C_{f,g},2/\eps)}=4\max(C_{f,g},\eps/2)\leq 4\max(C_{f,g},\eps)
$$
and
\begin{align*}
   \beta_\eps^{-1} 
   = 4 \kappa^{-1} \alpha^{-1}
   = 4 \left(\delta_P^{-1}\max\left( \frac{8L}{\eps},1\right)^d\right) \frac{\Lambda_P^2}{\lambda_P^2}
\frac{\bigl(\lceil 8L\,{\rm diam}(\Omega)/\eps \rceil\bigr)^{d+2}}{\inf_{y\in\Omega'} Q\bigl(B_{\frac{\eps}{8L}}(y)\bigr)},
\end{align*}
this gives the first claim of \cref{th:PL-Phi}. Moreover, concavity of $\Phi$ yields
\begin{align*}
\QOT_\eps(P,Q) -\Phi(f \oplus g)=\Phi(f_* \oplus g_*) -\Phi(f \oplus g) \leq \langle {\rm D}\Phi(f \oplus g),f_*\oplus g_*-f\oplus g \rangle_{L^2(P\otimes Q)}\\
\leq \|{\rm D}\Phi(f \oplus g)\|_{L^2(P\otimes Q)} \|(f_*-f) \oplus (g_*-g)\|_{L^2(P\otimes Q)}
\end{align*}
and now bounding $\|(f_*-f) \oplus (g_*-g)\|_{L^2(P\otimes Q)}$ by~\eqref{eq:provedFirstClaim} yields the second claim.
\end{proof}

\subsection{Proof of \cref{th:PL-Gamma}}

Finally, we translate the result of \cref{th:PL-Phi} from~$\Phi$ to~$\Gamma$, that is, from the space~$\Hil$ to $L^{2}(P) \times L^{2}(Q)$.

\begin{lemma}\label{le:translationPhiToGamma}
    Let $(f,g) \in L^{2}(P) \times L^{2}(Q)$ and 
    \begin{align*}
        I_{0} := \int\left( 1 - \frac{1}{\eps}(f \oplus g - c)_{+} \right) d(P\otimes Q).
    \end{align*}
    The gradients of $\Phi: \Hil \to\R$ and $\Gamma: L^{2}(P) \times L^{2}(Q)\to\R$ are related by
    \begin{align}\label{eq:gradientsRelation}
        {\rm D}\Phi(f \oplus g) = {\rm D}_{1}\Phi(f \oplus g) \oplus {\rm D}_{2}\Phi(f \oplus g), \quad\text{where}\quad 
        {\rm D}_{i}\Phi(f \oplus g) = {\rm D}_{i}\Gamma(f,g) - \frac{1}{2} I_{0}
    \end{align}
    and the explicit form of ${\rm D}_{i}\Gamma(f,g)$ is given in~\eqref{eq:gradientGamma}.
    As a consequence, 
    \begin{align*}
        \|{\rm D}\Phi(f \oplus g)\|^{2}_{L^{2}(P \otimes Q)} 
        = \|{\rm D}\Gamma(f, g)\|^{2}_{L^{2}(P) \times L^{2}(Q)} - I^{2}_{0} \leq \|{\rm D}\Gamma(f, g)\|^{2}_{L^{2}(P) \times L^{2}(Q)}.
    \end{align*}
\end{lemma}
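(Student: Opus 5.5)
The plan is to identify ${\rm D}\Phi(f\oplus g)$ as the orthogonal projection onto $\Hil$ of the $L^2(P\otimes Q)$-gradient of the integrand. This projection can be computed explicitly with the formula from the proof of \cref{le:M-explicit}.

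First, for $w=u\oplus v\in\Hil$, the directional derivative of $\Phi$ at $f\oplus g$ is
\[
\int (u\oplus v)\,h\,d(P\otimes Q), \qquad h:=1-\tfrac1\eps(f\oplus g-c)_+ \in L^2(P\otimes Q).
\]
This is justified by the same differentiation-under-the-integral argument used for \eqref{eq:phiFirstDeriv}. Hence ${\rm D}\Phi(f\oplus g)$ is the element of $\Hil$ representing $w\mapsto\langle w,h\rangle$, namely $\proj h$. By the formula $\proj h=h_P\oplus h_Q-\bar h$ established in the proof of \cref{le:M-explicit}, and since $h_P={\rm D}_1\Gamma(f,g)$, $h_Q={\rm D}_2\Gamma(f,g)$ by \eqref{eq:gradientGamma} and $\bar h=I_0$, we obtain
\[
{\rm D}\Phi(f\oplus g)=\bigl({\rm D}_1\Gamma-\tfrac12 I_0\bigr)\oplus\bigl({\rm D}_2\Gamma-\tfrac12 I_0\bigr).
\]
This is \eqref{eq:gradientsRelation}.

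For the norm identity, write $a:={\rm D}_1\Gamma-\tfrac12 I_0$ and $b:={\rm D}_2\Gamma-\tfrac12 I_0$. By Fubini, $\int{\rm D}_1\Gamma\,dP=\int{\rm D}_2\Gamma\,dQ=I_0$, so $\int a\,dP=\int b\,dQ=\tfrac12 I_0$. Expanding the square gives
\[
\|a\oplus b\|^2_{L^2(P\otimes Q)}=\|a\|^2_{L^2(P)}+\|b\|^2_{L^2(Q)}+2\Bigl(\int a\,dP\Bigr)\Bigl(\int b\,dQ\Bigr)=\|a\|^2+\|b\|^2+\tfrac12 I_0^2 .
\]
Next, $\|a\|^2_{L^2(P)}=\|{\rm D}_1\Gamma\|^2-I_0\int{\rm D}_1\Gamma\,dP+\tfrac14I_0^2=\|{\rm D}_1\Gamma\|^2-\tfrac34 I_0^2$, and likewise for $b$. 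Summing yields
\[
\|{\rm D}\Phi(f\oplus g)\|^2_{L^2(P\otimes Q)}=\|{\rm D}\Gamma(f,g)\|^2_{L^2(P)\times L^2(Q)}-\tfrac32 I_0^2+\tfrac12I_0^2=\|{\rm D}\Gamma(f,g)\|^2-I_0^2,
\]
and the final inequality is immediate.

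The only delicate point is justifying that the Fréchet gradient on the closed subspace $\Hil$ equals the projection of the ambient $L^2$ gradient. This holds because $\Phi$ is the restriction of the $\cC^{1}$ functional $w\mapsto\int(w-\tfrac1{2\eps}(w-c)_+^2)\,d(P\otimes Q)$ on $L^2(P\otimes Q)$, whose gradient is $h$ since $s\mapsto s_+^2$ has Lipschitz derivative. The remaining steps are bookkeeping.
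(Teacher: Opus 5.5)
Your proposal is correct and follows the paper's route: you identify ${\rm D}\Phi(f\oplus g)$ as the orthogonal projection of $1-\frac{1}{\eps}(f\oplus g-c)_+$ onto $\Hil$, using the formula from the proof of \cref{le:M-explicit}. The only cosmetic difference is in the norm identity: the paper tests the gradient relation against $({\rm D}_1\Phi,{\rm D}_2\Phi)$, whereas you expand the square directly using $\int {\rm D}_1\Gamma\,dP=\int {\rm D}_2\Gamma\,dQ=I_0$, and both computations check out.
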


\begin{proof}
    For all $(f,g),(u,v)\in L^{2}(P)\times L^2(Q)$, we have
    \begin{align*}
        \langle {\rm D}\Phi(f \oplus g), u\oplus v \rangle_{L^{2}(P\otimes Q)} 
        &=  \int (u \oplus v)\left( 1 - \frac{1}{\eps}(f \oplus g - c)_{+} \right) d(P \otimes Q) \\
        & = \big\langle 1 - \tfrac{1}{\eps}(f \oplus g - c)_{+}, u\oplus v \big\rangle_{L^{2}(P\otimes Q)}\\
        & = \langle {\rm D} \Gamma(f,g), (u,v)\rangle_{L^{2}(P)\times L^2(Q)}.
    \end{align*}
    Now~\eqref{eq:gradientsRelation} follows as in the proof of \cref{le:M-explicit}. Moreover, using the above identity with $u={\rm D}_{1}\Phi(f \oplus g)$ and $v={\rm D}_{2}\Phi(f \oplus g)$ yields
    \begin{align*}
        \|{\rm D}\Phi(f \oplus g)\|^{2}_{L^{2}(P \otimes Q)} 
        &=  \| {\rm D}_{1}\Gamma(f,g) \|^{2}_{L^{2}(P)} +  \| {\rm D}_{2}\Gamma(f,g) \|^{2}_{L^{2}(Q)} \\
        &\,\quad - \frac12 \langle {\rm D}_{1}\Gamma(f,g), I_{0}\rangle_{L^{2}(P)} - \frac12 \langle {\rm D}_{2}\Gamma(f,g), I_{0}\rangle_{L^{2}(Q)} \\
        & =   \| {\rm D}\Gamma(f,g) \|^{2}_{L^{2}(P)\times L^2(Q)} - I_0^2. \qedhere
    \end{align*}
\end{proof}

\begin{proof}[Proof of \cref{th:PL-Gamma}]
    Combine \cref{th:PL-Phi} with $\Gamma(f,g)=\Phi(f \oplus g)$ and \cref{le:translationPhiToGamma}.
\end{proof}

\section{Proofs of linear convergence} \label{se:proofs-of-linear}

We first record several Lipschitz constants associated with ${\rm D}\Gamma$. Recall the components ${\rm D}_{1}\Gamma(f,g)\in L^2(P)$ and ${\rm D}_{2}\Gamma(f,g)\in L^2(Q)$ from~\eqref{eq:gradientGamma}.

\begin{lemma}\label{le:lipschitzgradient}
    The gradient ${\rm D}\Gamma: L^{2}(P) \times L^{2}(Q)\to L^{2}(P) \times L^{2}(Q)$ is $\frac{2}{\eps}$-Lipschitz. Moreover, ${\rm D}_{i}\Gamma(f,\cdot)$ and ${\rm D}_{i}\Gamma(\cdot,g)$ are $\frac{1}{\eps}$-Lipschitz for $i=1,2$, for all $(f,g)\in L^{2}(P) \times L^{2}(Q)$.
\end{lemma}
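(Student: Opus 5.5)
The plan is to reduce everything to the fact that the positive part $s\mapsto(s)_+$ is $1$-Lipschitz on $\R$, together with Jensen's inequality for the marginal integrals. Fix $(f,g),(\tilde f,\tilde g)\in L^2(P)\times L^2(Q)$ and write $\Delta(x,y):=(f-\tilde f)(x)+(g-\tilde g)(y)$. By~\eqref{eq:gradientGamma},
\[
{\rm D}_1\Gamma(f,g)(x)-{\rm D}_1\Gamma(\tilde f,\tilde g)(x)=-\frac1\eps\int\Big[(f(x)+g(y)-c(x,y))_+-(\tilde f(x)+\tilde g(y)-c(x,y))_+\Big]dQ(y).
\]
Since the positive part is $1$-Lipschitz, this yields the pointwise bound $|{\rm D}_1\Gamma(f,g)(x)-{\rm D}_1\Gamma(\tilde f,\tilde g)(x)|\le\frac1\eps\int|\Delta(x,y)|\,dQ(y)$. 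The second component is handled the same way, with $P$ in place of $Q$.

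For the partial Lipschitz claims, the key steps are as follows.

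\textbf{Varying only $g$ ($f=\tilde f$).} Here $\Delta(x,y)=(g-\tilde g)(y)$. For ${\rm D}_1\Gamma(f,\cdot)$, Jensen's inequality gives
\[
\|{\rm D}_1\Gamma(f,g)-{\rm D}_1\Gamma(f,\tilde g)\|_{L^2(P)}^2\le\frac1{\eps^2}\int\Big(\int|g-\tilde g|\,dQ\Big)^2dP\le\frac1{\eps^2}\|g-\tilde g\|_{L^2(Q)}^2.
\]
For ${\rm D}_2\Gamma(f,\cdot)$, the integrand is $|(g-\tilde g)(y)|$ directly, so the bound $\frac1\eps\|g-\tilde g\|_{L^2(Q)}$ is immediate.

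\textbf{Varying only $f$.} This case is symmetric. Both partial maps are therefore $\frac1\eps$-Lipschitz.

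For the full gradient, I would not go through $\|\Delta\|_{L^2(P\otimes Q)}$, since that route loses a constant. Instead I split through the intermediate point $(\tilde f,g)$ and use the partial bounds componentwise:
\[
\|{\rm D}_1\Gamma(f,g)-{\rm D}_1\Gamma(\tilde f,\tilde g)\|_{L^2(P)}\le\frac1\eps\big(\|f-\tilde f\|_{L^2(P)}+\|g-\tilde g\|_{L^2(Q)}\big),
\]
and the same bound holds for the second component. Writing $a=\|f-\tilde f\|$ and $b=\|g-\tilde g\|$, squaring and summing gives
\[
\|{\rm D}\Gamma(f,g)-{\rm D}\Gamma(\tilde f,\tilde g)\|^2\le\frac{2}{\eps^2}(a+b)^2\le\frac4{\eps^2}(a^2+b^2).
\]
This is exactly the claimed constant $\frac2\eps$.

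There is no real obstacle here; the only care needed is to apply Jensen's inequality, or the triangle inequality, to each component separately in order to land on the constants $\frac1\eps$ and $\frac2\eps$.
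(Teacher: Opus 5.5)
Your proof is correct and is essentially the paper's argument. It uses the same ingredients (the $1$-Lipschitz positive part, Jensen's inequality, and $(a+b)^2\le 2a^2+2b^2$). The only difference is order: the paper bounds each component of ${\rm D}\Gamma$ directly by $\frac{2}{\eps^2}\bigl(\|f-\tilde f\|^2_{L^2(P)}+\|g-\tilde g\|^2_{L^2(Q)}\bigr)$ and then notes the factor $2$ drops when one coordinate is frozen, whereas you prove the partial bounds first and pass through the intermediate point $(\tilde f,g)$.

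Your side remark that going through $\|\Delta\|_{L^2(P\otimes Q)}$ loses a constant is not accurate. Jensen and Minkowski give $\|\Delta\|_{L^2(P\otimes Q)}^2\le (a+b)^2\le 2(a^2+b^2)$, which yields the same constant $\frac{2}{\eps}$. This does not affect your proof.
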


\begin{proof}
    For $(f,g),(u,v) \in L^{2}(P) \times L^{2}(Q)$, the inequality $|(t)_{+} - (s)_{+}| \leq |t-s|$ yields
    \begin{align*}
        \big|{\rm D}_{1}\Gamma(f,g) - {\rm D}_{1}\Gamma(u,v) \big|
        &\leq \frac{1}{\eps} \int \left|\big(u(\cdot) + v(y) - c(\cdot,y)\big)_{+} - \big(f(\cdot) + g(y) - c(\cdot, y)\big)_{+}\right| dQ(y) \\
        & \leq \frac{1}{\eps} \left\{ |f(\cdot) - u(\cdot)| + \int |g(y) - v(y)| dQ(y) \right\}.
    \end{align*}
    Using $(a+b)^2\leq 2a^2+2b^2$ and Jensen's inequality, we deduce
    \begin{align}
        \| {\rm D}_{1}\Gamma(f,g) - {\rm D}_{1}\Gamma(u,v) \|^{2}_{L^{2}(P)} & \leq \frac{2}{\eps^{2}} \left( \|f - u\|_{L^{2}(P)}^{2} + \|g - v\|_{L^{2}(Q)}^{2} \right) \nonumber\\
        & = \frac{2}{\eps^{2}} \| (f,g) - (u,v) \|^{2}_{L^{2}(P) \times L^{2}(Q)} \label{eq:D1Gamma-LipschitzEst}.
    \end{align}
    Note that if $f=u$ or $g=v$, the factor 2 is unnecessary, showing the claimed $\frac{1}{\eps}$-Lipschitz property of ${\rm D}_{1}\Gamma(f,\cdot)$ and ${\rm D}_{1}\Gamma(\cdot,g)$. On the other hand, combining~\eqref{eq:D1Gamma-LipschitzEst} with its analogue for ${\rm D}_{2}\Gamma$ yields
    \begin{align*}
        \| {\rm D}\Gamma(f,g) - {\rm D}\Gamma(u,v) \|^{2}_{L^{2}(P) \times L^{2}(Q)} \leq \frac{4}{\eps^{2}}\| (f,g) - (u,v) \|^{2}_{L^{2}(P) \times L^{2}(Q)}. \mbox{\qedhere}
    \end{align*}
\end{proof}

\begin{remark}\label{rk:sumNorm}
    For any bounded functions $f:\Omega\to\R$ and $g:\Omega'\to\R$, we note that 
\begin{align*}%
    \|f\oplus g\|_\infty = \inf_{a\in\R} (\|f-a\|_\infty + \|g+a\|_\infty).
\end{align*}
Recall that if $(f_*,g_*)$ are potentials, then $(f_*-a,g_*+a)$ are also potentials, for any $a\in\R$. Thus, if some inequality
$$
  A(f,g)\leq B(f,g,\|f-f_*\|_\infty + \|g-g_*\|_\infty)
$$
has been established for arbitrary potentials $(f_*,g_*)$, we can already conclude that 
$$
  A(f,g)\leq B(f,g,\|f\oplus g - f_*\oplus g_*\|_\infty).
$$
\end{remark}

\subsection{Proof of \cref{coro:GD} (gradient ascent)}

This proof is standard; we state the argument for the sake of completeness.

\begin{proof}[Proof of \cref{coro:GD}]
The bound \eqref{eq:GDIterateBound} for the iterates is shown in \cite[Lemma~5.2]{GonzalezSanzNutzRiveros.25}. (The norm in that reference is defined differently but has the same value by \cref{rk:sumNorm}. Moreover, the reference assumes that~$c$ is the quadratic cost, but the proof applies to any bounded~$c$.)

By \cref{le:lipschitzgradient}, ${\rm D}\Gamma$ is $\frac{2}{\eps}$-Lipschitz on $L^{2}(P)\times L^{2}(Q)$, hence $\Gamma$ is $\frac{2}{\eps}$-smooth. In particular,
\begin{equation}\label{eq:smoothness-Gamma}
\Gamma(f+u,g+v)\ge \Gamma(f,g) + \big\langle {\rm D}\Gamma(f,g),(u,v)\big\rangle_{L^{2}(P)\times L^{2}(Q)}
-\frac{1}{\eps}\|(u,v)\|_{L^{2}(P)\times L^{2}(Q)}^{2}.
\end{equation}
We apply~\eqref{eq:smoothness-Gamma} with $(f,g)=(f_n,g_n)$ and $(u,v)=\eta\,  {\rm D}\Gamma(f_n,g_n)$. Using the update rule~\eqref{eq:gradientascent} and $\eta\in(0,\eps)$, we obtain
\begin{align*}
\Gamma(f_{n+1},g_{n+1})
&\ge \Gamma(f_n,g_n)
+\eta\|{\rm D}\Gamma(f_n,g_n)\|^2_{L^{2}(P)\times L^{2}(Q)}
-\frac{\eta^{2}}{\eps}\|{\rm D}\Gamma(f_n,g_n)\|_{L^{2}(P)\times L^{2}(Q)}^{2} \\
&=\Gamma(f_n,g_n)+\eta\Bigl(1-\frac{\eta}{\eps}\Bigr)\|{\rm D}\Gamma(f_n,g_n)\|^2_{L^{2}(P)\times L^{2}(Q)}.
\end{align*}
Subtracting this from the optimal value $\QOT_\eps(P,Q)$ gives
\begin{equation}\label{eq:GD-gap-step}
\Delta_{n+1}
\le \Delta_n-\eta\Bigl(1-\frac{\eta}{\eps}\Bigr)\|{\rm D}\Gamma(f_n,g_n)\|^2_{L^{2}(P)\times L^{2}(Q)}.
\end{equation}
Using the PL inequality~\eqref{eq:PLforGamma} at $(f_n,g_n)$ and the bound~\eqref{eq:GDIterateBound},
\begin{equation*}
\|{\rm D}\Gamma(f_n,g_n)\|_{L^{2}(P)\times L^{2}(Q)}^{2}
\;\ge\; \frac{1}{\gamma_\eps M}\,\Delta_n, \qquad M:= \max(2\|f_{0}\oplus g_{0} - f_*\oplus g_*\|_\infty,\eps),
\end{equation*}
hence we conclude that 
\[
\Delta_{n+1}
\le \Delta_n-\eta\Bigl(1-\frac{\eta}{\eps}\Bigr)\frac{1}{\gamma_\eps M}\Delta_n
= (1-q)\Delta_n,
\qquad
q:=\frac{1}{\gamma_{\eps}M}\,\eta\Bigl(1-\frac{\eta}{\eps}\Bigr). %
\]

Iterating this estimate yields $\Delta_n\le (1-q)^n\Delta_0$.

It remains to prove the bound for the iterates in $L^2(P\otimes Q)$. By the error bound in \cref{th:PL-Gamma} and the definition of $M$,
\begin{equation}\label{eq:GD-error-bound-step}
\|f_n\oplus g_n - f_*\oplus g_*\|_{L^2(P\otimes Q)}^2
\le \gamma_\eps^2 M^2\|{\rm D}\Gamma(f_n,g_n)\|_{L^{2}(P)\times L^{2}(Q)}^2.
\end{equation}
On the other hand, \eqref{eq:GD-gap-step} implies
\[
\eta\Bigl(1-\frac{\eta}{\eps}\Bigr)\|{\rm D}\Gamma(f_n,g_n)\|_{L^{2}(P)\times L^{2}(Q)}^{2}
\le \Delta_n-\Delta_{n+1}
\le \Delta_n
\le (1-q)^n\Delta_0.
\]
Substituting this estimate into \eqref{eq:GD-error-bound-step}, we conclude that
\[
\|f_n\oplus g_n - f_*\oplus g_*\|_{L^2(P\otimes Q)}^2
\le \frac{\gamma_\eps^2 M^2}{\eta(1-\eta/\eps)}\,\Delta_0\,(1-q)^n. \qedhere
\]

\end{proof}

\subsection{Proof of \cref{coro:Implicit} (coordinate ascent)}

Again, the proof has two steps. The first step is to bound the iterates. Once that is achieved, the second step follows the standard argument for deriving linear convergence of coordinate ascent from a PL inequality. 

\begin{proof}[Proof of \cref{coro:Implicit}] 
\emph{Step 1.} Set
\begin{align}\label{eq:sectionsForLinearConvProofs}
    \cS_{x} := \left\{ y \in \Omega' \text{ : } f_{*}(x) + g_{*}(y) \geq c(x,y) \right\}, \quad \cS^{n}_{x} := \left\{ y \in \Omega' \text{ : } f_{n}(x) + g_{n}(y) \geq c(x,y) \right\}.
\end{align}
Using \cref{eq:FOC} and the definition~\eqref{eq:implicitalgo-f} of  $f_{n}$, as well as the inequality $(a)_{+} - (b)_{+} \leq (a - b)\1_{a \geq 0}$, we have for every $x \in \Omega$ that
\begin{align*}
    0 & = \int \big(f_{*}(x) + g_{*}(y) - c(x,y)\big)_{+} - \big(f_n(x) + g_{n}(y) - c(x,y)\big)_{+} dQ(y) \\
    & \leq \int_{\cS_{x}} f_{*}(x) + g_{*}(y) - f_n(x) - g_{n}(y) dQ(y)
\end{align*}
and hence
\begin{align*}
    (f_n(x) - f_{*}(x)) Q(\cS_{x}) \leq  \int_{\cS_{x}} g_{*}(y) -  g_{n}(y) dQ(y) \leq \| g_{n} - g_{*} \|_{\infty} Q(\cS_{x}).
\end{align*}
Symmetrically, 
$
    (f_{*}(x) - f_n(x)) Q(\cS^{n}_{x}) \leq \| g_{n} - g_{*} \|_{\infty} Q(\cS^{n}_{x}).
$
Noting that \cref{eq:FOC} and \cref{eq:implicitalgo-f} imply  $Q(\cS_{x}) > 0$ and $Q(\cS^{n}_{x}) > 0$, we conclude
\begin{align*}
    \| f_n - f_{*} \|_{\infty} \leq \| g_{n}  - g_{*} \|_{\infty}.
\end{align*}
Arguing analogously for $g_{n+1}$ yields 
\begin{align*}
   \| g_{n+1}  - g_{*} \|_{\infty} \leq  \| f_n - f_{*} \|_{\infty}.
\end{align*}
Combining these two inequalities yields the claim~\eqref{eq:implicitIteratesBound}.

\emph{Step 2.}
Recall the expressions of ${\rm D}_1 \Gamma $ and ${\rm D}_2\Gamma $ from \eqref{eq:gradientGamma}. By \cref{le:lipschitzgradient},  ${\rm D}_i\Gamma$ is $\frac{1}{\eps}$-Lipschitz in each of its coordinates. In particular, the descent lemma yields
$$ \Gamma(f,g_n)\geq \Gamma(f_{n-1},g_n) + \langle   {\rm D}_1 \Gamma(f_{n-1} ,g_{n}) , f-f_{n-1} \rangle_{L^2(P)} - \frac{1}{2\eps} \| f-f_{n-1}\|^2_{L^2(P)}$$
for every $f\in L^2(P)$. Taking supremum over $f$ and recalling the definition of~$f_n$, we obtain
\begin{align*}
 \Gamma(f_n,g_n)&\geq \Gamma(f_{n-1},g_n) + \sup_f \left\{\langle   {\rm D}_1 \Gamma(f_{n-1} ,g_{n}) , f-f_{n-1} \rangle_{L^2(P)}  -  \frac{1}{2\eps} \| f-f_{n-1}\|^2_{L^2(P)} \right\}\\
& = \Gamma(f_{n-1},g_n) +\frac{\eps}{2} \| {\rm D} _1\Gamma(f_{n-1} ,g_{n}) \|^2_{L^2(P)}.  \end{align*}
Analogously,
$$  \Gamma(f_{n-1},g_n)\geq \Gamma(f_{n-1},g_{n-1}) +\frac{\eps}{2} \| {\rm D}_2\Gamma(f_{n-1} ,g_{n-1}) \|^2_{L^2(Q)}.$$
Combining the two inequalities yields
\begin{align*}
 \Gamma(f_n,g_n)\geq \Gamma(f_{n-1},g_{n-1})  +\frac{\eps}{2}\left( \| {\rm D} _1\Gamma(f_{n-1} ,g_{n}) \|^2_{L^2(P)}  +\| {\rm D}_2\Gamma(f_{n-1} ,g_{n-1}) \|^2_{L^2(Q)} \right),\end{align*}
 but as ${\rm D}_1\Gamma(f_{n-1} ,g_{n-1}) =0$ and ${\rm D}_2\Gamma(f_{n-1} ,g_{n}) = 0 $ by the first-order conditions~\cref{eq:implicitalgo-f,eq:implicitalgo-g} for $f_{n-1}$ and $g_n$, respectively, this even gives
\begin{align}
    \Gamma(f_n,g_n)&\geq \Gamma(f_{n-1},g_{n-1})  +\frac{\eps}{2}\left( \| {\rm D} \Gamma(f_{n-1} ,g_{n}) \|^2_{L^2(P) \times L^{2}(Q)} +\| {\rm D}\Gamma(f_{n-1} ,g_{n-1}) \|^2_{L^2(P) \times L^{2}(Q)}\right) \nonumber\\
    &\geq \Gamma(f_{n-1},g_{n-1})  +\frac{\eps}{2} \| {\rm D}\Gamma(f_{n-1} ,g_{n-1}) \|^2_{L^2(P) \times L^{2}(Q)} \label{eq:beforePLapplication}.
\end{align}
Next, we apply the PL inequality \cref{eq:PLforGamma} to the last term. Using that
$$
  \|f_{n-1}\oplus g_{n-1} - f_*\oplus g_*\|_\infty \leq \|f_{n-1}-f_*\|_{\infty}+\|g_{n-1}-g_*\|_{\infty} \leq 2 \|g_{0}-g_*\|_{\infty}
$$
by~\eqref{eq:implicitIteratesBound} and abbreviating $M:=\max\left(2\|g_{0}-g_*\|_{\infty},\eps\right)$, we obtain
\begin{align*}
    \Gamma(f_n,g_n)\geq \Gamma(f_{n-1},g_{n-1}) +\frac{\eps}{2}\frac{1}{\gamma_\eps  M} \left(  \QOT_\eps(P,Q)- \Gamma(f_{n-1},g_{n-1})\right).
\end{align*}
Subtracting this from $\QOT_\eps(P,Q)$ gives
 $   \Delta_n
    \leq  \Delta_{n-1}  (1-\frac{\eps}{2\gamma_\eps  M})$ and hence the claim for the suboptimality gap. The claim for the iterates follows via the error bound, as in the proof of \cref{coro:GD}.
\end{proof}

\subsection{Proof of \cref{coro:Explicit} (coordinate gradient ascent)}

Once again, the argument has two steps.

\begin{proof}[Proof of \cref{coro:Explicit}] 
\emph{Step 1.} Recall the notation~\eqref{eq:sectionsForLinearConvProofs}. The bound~\eqref{eq:ExplicitIterateBound} follows by a similar argument as, e.g., \cite[Lemma~5.2]{GonzalezSanzNutzRiveros.25}; we give the details for completeness. From \cref{Explicit-coordinate} we obtain that for all $x \in \Omega$,
\begin{align*}
    f_{n+1}(x)  - f_{*}(x) = & f_{n}(x) - f_{*}(x) + \frac{\eta}{\eps} \left( \eps - \int (f_{n}(x) + g_{n}(y) - c(x,y))_{+} dQ(y) \right) \\
    = & f_{n}(x) - f_{*}(x) \\
    & + \frac{\eta}{\eps} \left( \int (f_{*}(x) + g_{*}(y) - c(x,y))_{+} - (f_{n}(x) + g_{n}(y) - c(x,y))_{+} dQ(y) \right) \\
    & \leq f_{n}(x) - f_{*}(x) + \frac{\eta}{\eps} \left( \int_{\cS_{x}} f_{*}(x) + g_{*}(y) - (f_{n}(x) + g_{n}(y)) dQ(y) \right) \\
    & = \left( 1 - \frac{\eta}{\eps}Q(\cS_{x})\right) (f_{n}(x) - f_{*}(x)) - \frac{\eta}{\eps} \left( \int_{\cS_{x}}g_{n}(y) - g_{*}(y) dQ(y) \right) \\
    & \leq \left( 1 - \frac{\eta}{\eps}Q(\cS_{x})\right) \|f_{n} - f_{*}\|_{\infty} + \frac{\eta}{\eps} Q(\cS_{x}) \|g_{n} - g_{*}\|_{\infty},
\end{align*}
which is a convex combination of $\|f_{n} - f_{*}\|_{\infty}$ and $\|g_{n} - g_{*}\|_{\infty}$. Analogously,
\begin{align*}
    f_{*}(x)  - f_{n+1}(x) \leq \left( 1 - \frac{\eta}{\eps}Q(\cS^{n}_{x})\right) \|f_{n} - f_{*}\|_{\infty} + \frac{\eta}{\eps} Q(\cS^{n}_{x}) \|g_{n} - g_{*}\|_{\infty}.
\end{align*}
Together, we have
\begin{align*}
    \| f_{n+1} - f_{*} \|_{\infty} \leq \max ( \|f_{n} - f_{*}\|_{\infty}, \|g_{n} - g_{*}\|_{\infty}).
\end{align*}
Repeating the argument for $g_{n+1}$ gives
\begin{align*}
    \| g_{n+1} - g_{*} \|_{\infty} \leq  \max ( \|f_{n+1} - f_{*}\|_{\infty}, \|g_{n} - g_{*}\|_{\infty}) \leq  \max ( \|f_{n} - f_{*}\|_{\infty}, \|g_{n} - g_{*}\|_{\infty}) 
\end{align*}
and combining the two bounds yields~\eqref{eq:ExplicitIterateBound}.

\emph{Step~2.} 
\Cref{le:lipschitzgradient} implies
$$ \Gamma(f_{n+1},g_{n+1}) \geq  \Gamma(f_{n+1},g_{n}) + \left\langle {\rm D}_2\Gamma(f_{n+1},g_{n}),  g_{n+1}-g_n \right\rangle_{L^{2}(Q)} -\frac{1}{2\eps}\| g_{n+1}-g_n\|^{2}_{L^{2}(Q)}.$$
The definition of $g_{n+1}$ shows that $g_{n+1}-g_n= \eta\, {\rm D}_2\Gamma(f_{n+1},g_{n})$. Substituting this gives
\begin{equation}\label{eq:inequality-explicit-1}
    \Gamma(f_{n+1},g_{n+1}) \geq  \Gamma(f_{n+1},g_{n}) + \eta\left(1- \frac{\eta}{2\eps} \right) \left\| {\rm D}_2\Gamma(f_{n+1},g_{n})\right\|_{L^{2}(Q)}^2. 
\end{equation}
Analogously, we have
\begin{equation}
    \label{eq:inequality-explicit-2}
    \Gamma(f_{n+1},g_n) \geq  \Gamma(f_{n},g_{n}) + \eta\left(1 - \frac{\eta}{2 \eps } \right) \left\| {\rm D}_1\Gamma(f_{n}, g_{n})\right\|_{L^{2}(P)}^{2}. 
\end{equation}
Plugging \cref{eq:inequality-explicit-2} into \cref{eq:inequality-explicit-1} yields
\begin{equation}\label{eq:inequality-explicit-3}
    \Gamma(f_{n+1},g_{n+1}) \geq  \Gamma(f_{n},g_{n}) + \eta\left(1 - \frac{\eta}{2\eps} \right) \left( \left\| {\rm D}_1\Gamma(f_{n},g_{n})\right\|_{L^{2}(P)}^2 + \left\| {\rm D}_2\Gamma(f_{n+1}, g_{n})\right\|_{L^{2}(Q)}^{2} \right) . 
\end{equation}

Note that the inequality $\|u\|^2 \ge \frac12\|v\|^2 - \|u-v\|^2$ holds in any normed space. Applying this with $u := {\rm D}_2\Gamma(f_{n+1},g_n)\in L^2(Q)$ and $v := {\rm D}_2\Gamma(f_n,g_n)\in L^2(Q)$ yields
\begin{equation*}
\|{\rm D}_2\Gamma(f_{n+1},g_n)\|_{L^2(Q)}^2
\ge
\frac12 \|{\rm D}_2\Gamma(f_n,g_n)\|_{L^2(Q)}^2
-
\|{\rm D}_2\Gamma(f_{n+1},g_n)-{\rm D}_2\Gamma(f_n,g_n)\|_{L^2(Q)}^2.
\end{equation*}
By \cref{le:lipschitzgradient} and the definition of $f_{n+1}$, the last term satisfies
\[
\|{\rm D}_2\Gamma(f_{n+1},g_n)-{\rm D}_2\Gamma(f_n,g_n)\|_{L^2(Q)}
\le \frac{1}{\eps}\|f_{n+1}-f_n\|_{L^2(P)} 
= \frac{\eta}{\eps}\|{\rm D}_1\Gamma(f_{n},g_{n})\|_{L^2(P)},
\]
so that 
\begin{align*}
    \left\| {\rm D}_2\Gamma(f_{n+1},g_{n})\right\|_{L^{2}(Q)}^2 
&\geq  \frac{1}{2}\left\| {\rm D}_2\Gamma(f_{n},g_{n})\right\|_{L^{2}(Q)}^2 - \frac{\eta^{2}}{\eps^{2}} \| {\rm D}_1\Gamma(f_{n},g_{n}) \|^{2}_{L^{2}(P)}.
\end{align*}
Plugging this into \eqref{eq:inequality-explicit-3} and recalling that $ \eta\in (0, \eps/\sqrt{2})$, we deduce
\begin{align*}
    \Gamma(f_{n+1},g_{n+1}) &\geq  \Gamma(f_{n},g_{n}) + \frac{\eta}{2}\left(1 - \frac{\eta}{2\eps} \right) \left\| {\rm D}\Gamma(f_{n}, g_{n})\right\|_{L^{2}(P) \times L^{2}(Q)}^{2}.
\end{align*}
This bound has the same form as \eqref{eq:beforePLapplication} except for the different constant. Moreover, \eqref{eq:ExplicitIterateBound} implies that 
$\|f_{n} - f_{*}\|_{\infty}+ \|g_{n} - g_{*}\|_{\infty} \leq 2(\|f_{0} - f_{*}\|_{\infty}+ \|g_{0} - g_{*}\|_{\infty})$. We can now proceed exactly as after \eqref{eq:beforePLapplication} to deduce the final result, except that we also use \cref{rk:sumNorm} to replace $\|f_{0} - f_{*}\|_{\infty}+ \|g_{0} - g_{*}\|_{\infty}$ by $\|f_{0} \oplus g_{0} - f_{*}\oplus g_{*}\|_{\infty}$. Once again, the claim for the iterates follows via the error bound, as in the proof of \cref{coro:GD}.
\end{proof}

\section{Conclusion}\label{sec:conclusion}

In this paper we established a local Polyak--{\L}ojasiewicz inequality for the dual objective of quadratically regularized optimal transport. The proof proceeds by a uniform coercivity estimate for the second-order form obtained from the dual objective along the segment joining an arbitrary pair $(f,g)$ to an optimizer $(f_*,g_*)$. This yields the error bound and PL inequality of \cref{th:PL-Gamma}; the dependence of the constant on the $L^\infty$ distance to the optimizer reflects the local nature of the result which is unavoidable given the shape of the dual objective.

As an application, we derived linear convergence rates for three natural dual algorithms: gradient ascent \eqref{eq:gradientascent}, coordinate ascent \eqref{eq:implicit}, and coordinate gradient ascent \eqref{Explicit-coordinate}. Coordinate ascent is implicit for QOT, since each coordinate update requires solving \eqref{eq:implicitalgo-f}--\eqref{eq:implicitalgo-g}. By contrast, gradient ascent and coordinate gradient ascent are explicit schemes, but they require a choice of step size. The sufficient step-size assumptions in \cref{coro:GD,coro:Explicit} ensure monotonicity, uniform $L^\infty$ control of the iterates, and hence a uniform PL constant along the trajectory.

Qualitatively, the theoretical results predict the behavior observed numerically. In practice (see, e.g., the plots in \cite{GonzalezSanzNutzRiveros.25} for gradient ascent), we observe a finite burn-in phase followed by a clear linear convergence regime. The constants in \cref{Section:linear-convergence} are necessarily conservative: in practice the burn-in is typically much shorter, and the asymptotic contraction is often much faster than what is guaranteed by the explicit bounds. While the constants in \cref{th:PL-Gamma} have a pessimistic explicit dependence on $\eps$, numerical experiments often show that the number of iterations required to reach a fixed accuracy scales closer to $1/\eps$ over moderate ranges.

The step-size restrictions are likewise conservative. Our theory requires $\eta<\eps$ for gradient ascent and $\eta<\eps/\sqrt{2}$ for coordinate gradient ascent. These assumptions are sufficient for the estimates in \cref{eq:GDIterateBound,eq:ExplicitIterateBound}, but they are not sharp as stability thresholds. Numerically, convergence does fail when the step size is taken too large, but the breakdown often occurs at values larger than those imposed by the proof (see also \cite{GonzalezSanzNutzRiveros.25}). In particular, the  optimal step size (obtained a posteriori by grid search) is often relatively large.

Lastly, let us briefly comment on some practical aspects that are not captured by our theory. The comparison between the two explicit schemes is nuanced. Coordinate gradient ascent typically requires fewer outer iterations than gradient ascent, because each iteration uses the updated first coordinate before updating the second. However, each such iteration is more expensive. At the conservative step sizes guaranteed by \cref{coro:GD,coro:Explicit}, the wall-time performance of the two schemes is therefore often comparable. If, instead, the best stable step size is supplied by an oracle, coordinate gradient ascent is frequently faster; its advantage then stems from its ability to use a larger effective step size while retaining a faster contraction per iteration. In practice, an adaptive choice of step size (such as monotone Armijo backtracking with Barzilai--Borwein initialization) substantially accelerates both schemes. Then, we often observe that coordinate gradient ascent converges faster (certainly in iterations, but even when measured in wall-time), especially when $\eps$ is small. Gradient ascent is sometimes faster for large $\eps$, thanks to the simpler iteration. In both cases, the final step size is often a multiple of~$\eps$, and these large steps seem to be responsible for the improvement. The implicit coordinate ascent scheme does not require a step size, but instead requires solving the first-order equations. Implemented with an efficient solver (such as an active-set Newton method), coordinate ascent is competitive with the explicit methods, and often the fastest when $\eps$ is moderate---while the implicit nature makes each iteration costly, a better contraction constant more than compensates for that. Broadly speaking, the three methods seem comparable, with wall-time ratios rarely exceeding 3 for moderate experiments. However, extensive experiments and sophisticated implementations are left for future work.

\appendix

\section{Extension to uniformly continuous costs}\label{app:uniformly-continuous-costs}

This appendix shows how to generalize our results from Lipschitz costs $c$ to costs that are merely uniformly continuous. Set
\[
    R:=1\vee \diam(\Omega)\vee \diam(\Omega').
\]

\begin{assumption}[Cost with modulus of continuity]\label{assumption:cost-omega}
There exists a modulus of continuity $\omega:[0,\infty)\to[0,\infty)$, that is, a nondecreasing continuous function with $\omega(0)=0$, such that
\begin{equation}\label{eq:cost-omega}
    |c(x,y)-c(x',y')|\le \omega(\|x-x'\|+\|y-y'\|)
    \qquad\text{for all }x,x'\in\Omega,\ y,y'\in\Omega'.
\end{equation}
\end{assumption}

Under Assumption~\ref{assumption:cost-omega}, Proposition~\ref{pr:prelims} remains valid, except that item~(iii) is replaced by
\begin{equation}\label{eq:potentials-omega}
    |f_*(x)-f_*(x')|\le \omega(\|x-x'\|),
    \qquad
    |g_*(y)-g_*(y')|\le \omega(\|y-y'\|);
\end{equation}
cf.~\cite{Nutz.2024}. We fix these versions of the potentials throughout the remainder of the appendix.

To state the results, define the radius
\begin{equation}\label{eq:def-varrho-omega}
    \varrho_{\eps,\omega}
    :=
    \sup\Bigl\{r\in[0,R]: 2\omega(r)+\omega(2r)\le \frac{\eps}{2}\Bigr\}>0
\end{equation}
and the constants
\begin{align}
    \alpha_{\eps,\omega}
    &:= \frac{\lambda_P^2}{\Lambda_P^2}
    \frac{\inf_{y\in\Omega'}Q(B_{\varrho_{\eps,\omega}}(y))}{\bigl(\lceil \diam(\Omega)/\varrho_{\eps,\omega}\rceil\bigr)^{d+2}},
    \label{eq:def-alpha-omega}\\
    \kappa_{\eps,\omega}
    &:=\delta_P\min(\varrho_{\eps,\omega},1)^d,
    \qquad
    \beta_{\eps,\omega}:=\frac14\kappa_{\eps,\omega}\alpha_{\eps,\omega},
    \label{eq:def-kappa-beta-omega}\\
    \gamma_{\eps,\omega}
    &:=4\beta_{\eps,\omega}^{-1}
    =16\,\delta_P^{-1}\min(\varrho_{\eps,\omega},1)^{-d}
    \frac{\Lambda_P^2}{\lambda_P^2}
    \frac{\bigl(\lceil \diam(\Omega)/\varrho_{\eps,\omega}\rceil\bigr)^{d+2}}{\inf_{y\in\Omega'}Q(B_{\varrho_{\eps,\omega}}(y))}.
    \label{eq:def-gamma-omega}
\end{align}

\begin{remark}\label{rk:omega-inverse}
If one strengthens \eqref{eq:cost-omega} to the coordinatewise estimate
\[
    |c(x,y)-c(x',y')|\le \omega(\|x-x'\|)+\omega(\|y-y'\|),
\]
then one can replace \eqref{eq:def-varrho-omega} with any radius satisfying $\omega(\varrho_{\eps,\omega})\le \eps/8$. In particular, denoting by $\omega^{-1}$ the (generalized) inverse of $\omega$, one may take the more explicit radius
\[
  \varrho_{\eps,\omega}:=\omega^{-1}(\eps/8)\wedge R.
\]
\end{remark}

\begin{lemma}[Coordinate-wise coercivity with a modulus of continuity]\label{lemma:PL-forVariance-omega}
Let $r_0$ be as in~\eqref{eq:def-C-r0}. For all $r\le r_0$ and $(u,v)\in L^2(P)\times L^2(Q)$,
\[
\int_{f_r\oplus g_r\ge c}(u\oplus v)^2\,d(P\otimes Q)
\ge \alpha_{\eps,\omega}\,\Var_P(u).
\]
\end{lemma}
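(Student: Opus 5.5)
The proof follows that of \cref{lemma:PL-forVariance} line by line. The only change is in the localization step, where the Lipschitz estimate is replaced by the modulus~$\omega$. Fix $r\le r_0$ and $(u,v)$, and set $\varrho:=\varrho_{\eps,\omega}$.

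The first part of the earlier proof used no Lipschitz property and carries over verbatim. Since $r\le r_0$ gives $\{f_r\oplus g_r\ge c\}\supset\{f_*\oplus g_*-c\ge \eps/2\}$, we can pass to the sections $\cT^\eps_y$. Minimizing over the constant $v(y)$ and using the variance identity together with $P(\cT^\eps_y)\le 1$, we recover~\eqref{eq:proof_for_variance_step1}:
\[
\cI\ge \tfrac12\int_{\Omega'}\int_{\cT^\eps_y}\int_{\cT^\eps_y}(u(x)-u(z))^2\,dP(x)\,dP(z)\,dQ(y).
\]

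The key step is the replacement of~\eqref{eq:forIndicatorRel}. As before, take a measurable selector $y(x)$ maximizing $y\mapsto f_*(x)+g_*(y)-c(x,y)$; by~\eqref{eq:FOC} the maximum is at least $\eps$. For $z\in B_\varrho(x)$ and $y\in B_\varrho(y(x))$, combine~\eqref{eq:potentials-omega} with~\eqref{eq:cost-omega}:
\[
f_*(z)+g_*(y)-c(z,y)\ge f_*(x)+g_*(y(x))-c(x,y(x))-\omega(\|x-z\|)-\omega(\|y-y(x)\|)-\omega(\|x-z\|+\|y-y(x)\|).
\]
The right-hand side is at least $\eps-2\omega(\varrho)-\omega(2\varrho)$. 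It remains to check that $2\omega(\varrho)+\omega(2\varrho)\le\eps/2$ holds at $\varrho=\varrho_{\eps,\omega}$ itself, not just below the supremum. This follows from continuity and monotonicity of $\omega$: the constraint set in~\eqref{eq:def-varrho-omega} is closed, so the supremum is attained. Positivity of $\varrho_{\eps,\omega}$ follows from $\omega(0)=0$ and continuity.

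This is the only delicate point. The modulus here is joint in $(x,y)$, which is why the combination $2\omega(\varrho)+\omega(2\varrho)$ appears rather than the $4L\varrho$ of the Lipschitz case. With the estimate in hand, $z\in\cT^\eps_y$, and in particular $x\in\cT^\eps_y$. This yields the same indicator inequality
\[
\mathbf 1_{B_\varrho(y(x))}(y)\,\mathbf 1_{B_\varrho(x)}(z)\le \mathbf 1_{\cT^\eps_y}(x)\,\mathbf 1_{\cT^\eps_y}(z)\,\mathbf 1_{B_\varrho(y(x))}(y)
\]
as in the Lipschitz case.

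The remainder is unchanged. Integrating out $y$ gives the factor $\inf_{y}Q(B_\varrho(y))$. The density bounds give $\lambda_P^2\,\mathcal E_\varrho(u)$. Next, \cref{Lemma:Bound-energy} applies with $\varrho=\varrho_{\eps,\omega}>0$, since it holds for any $\varrho>0$ and convex bounded $\Omega$; the fact that $\varrho\le R$ is not even needed. Finally, the comparison with $\frac{2}{\Lambda_P^2}\Var_P(u)$ gives $\cI\ge\alpha_{\eps,\omega}\Var_P(u)$. The factor $\tfrac12$ cancels against the $2$ exactly as before, so the constant matches~\eqref{eq:def-alpha-omega}.
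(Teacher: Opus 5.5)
Your proposal is correct and follows the paper's proof essentially verbatim. You keep the Lipschitz-free part up to \eqref{eq:proof_for_variance_step1}, replace the localization estimate by $\eps-2\omega(\varrho)-\omega(2\varrho)\ge\eps/2$ using \eqref{eq:cost-omega} and \eqref{eq:potentials-omega}, and rerun the rest with $\varrho=\varrho_{\eps,\omega}$; your explicit check that the supremum in \eqref{eq:def-varrho-omega} is attained, by continuity of $\omega$, is a small point the paper leaves implicit.
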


\begin{proof}
The proof is identical to that of Lemma~\ref{lemma:PL-forVariance} up to \eqref{eq:proof_for_variance_step1}. It remains to replace the estimate following the definition of $m_x$.

Choose a measurable selector
\[
\Omega\ni x\mapsto y(x)\in\argmax_{y\in\Omega'}(f_*(x)+g_*(y)-c(x,y))_+
\]
and define
\[
    m_x:=\max_{y\in\Omega'}(f_*(x)+g_*(y)-c(x,y))_+.
\]
As before, \eqref{eq:FOC} implies $m_x\ge \eps$ for all $x\in\Omega$. Let $x,z\in\Omega$ and $y\in\Omega'$. Using \eqref{eq:cost-omega} and \eqref{eq:potentials-omega}, we obtain
\begin{align*}
    f_*(z)+g_*(y)-c(z,y)
    &\ge f_*(x)+g_*(y(x))-c(x,y(x)) \\
    &\quad -\omega(\|x-z\|)-\omega(\|y(x)-y\|)-\omega(\|x-z\|+\|y(x)-y\|) \\
    &= m_x-\omega(\|x-z\|)-\omega(\|y(x)-y\|)-\omega(\|x-z\|+\|y(x)-y\|).
\end{align*}
Hence, if $z\in B_{\varrho_{\eps,\omega}}(x)$ and $y\in B_{\varrho_{\eps,\omega}}(y(x))$, then by \eqref{eq:def-varrho-omega},
\[
    f_*(z)+g_*(y)-c(z,y)
    \ge \eps-2\omega(\varrho_{\eps,\omega})-\omega(2\varrho_{\eps,\omega})
    \ge \frac{\eps}{2},
\]
so that $z\in\cT_y^\eps$. Thus the inclusion used in the proof of Lemma~\ref{lemma:PL-forVariance} remains valid with $\eps/(8L)$ replaced by $\varrho_{\eps,\omega}$. Repeating the remainder of that proof with $\varrho=\varrho_{\eps,\omega}$ yields the claim.
\end{proof}

\begin{remark}\label{rk:ballContained-omega}
The proof of Remark~\ref{rk:ballContained} is unchanged with $\eps/(8L)$ replaced by $\varrho_{\eps,\omega}$. In particular,
\begin{equation}\label{eq:ballContained-omega}
    \inf_{y\in\Omega'}Q(B_{\varrho_{\eps,\omega}}(y))\le Q(\cS_x^\eps),
    \qquad
    \inf_{x\in\Omega}P(B_{\varrho_{\eps,\omega}}(x))\le P(\cT_y^\eps).
\end{equation}
\end{remark}

\begin{theorem}[Uniform coercivity of $\mathbb M$ under a modulus of continuity]\label{Theorem:Bound-double-omega}
Let $r_0$ be as in~\eqref{eq:def-C-r0}. Then for all $r\le r_0$ and $(u,v)\in L^2(P)\times L^2(Q)$,
\begin{equation}\label{eq:Mr-coercive-omega}
    \langle u\oplus v,\mathbb M(u\oplus v)\rangle_{L^2(P\otimes Q)}
    \ge \beta_{\eps,\omega}\|u\oplus v\|_{L^2(P\otimes Q)}^2,
\end{equation}
and consequently,  for a.e.\ $r\in[0,r_0]$,
\begin{equation}\label{eq:Bound-double-takeaway-omega}
    \|(f_*-f)\oplus(g_*-g)\|_{L^2(P\otimes Q)}^2
    \le \frac{\eps}{\beta_{\eps,\omega}}\phi''(r).
\end{equation}
\end{theorem}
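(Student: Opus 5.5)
The plan is to repeat the proof of \cref{Theorem:Bound-double} line by line, replacing the radius $\frac{\eps}{8L}$ by $\varrho_{\eps,\omega}$, \cref{lemma:PL-forVariance} by \cref{lemma:PL-forVariance-omega}, and \cref{rk:ballContained} by \cref{rk:ballContained-omega}. The constants $\kappa_{\eps,\omega}$ and $\alpha_{\eps,\omega}$ then take the roles of $\kappa$ and $\alpha$. The structure of that proof never used the Lipschitz property directly: it entered only through the section lower bounds and the variance coercivity. Both have been re-established under \cref{assumption:cost-omega}.

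\emph{Step 1 (section bounds).} For $r\le r_0$ we still have $\{f_r\oplus g_r\ge c\}\supset\{f_*\oplus g_*-c\ge \eps/2\}$, since $|f_r\oplus g_r-f_*\oplus g_*|\le rC_{f,g}\le \eps/2$. Hence $\cT^\eps_y\subset\cT_{r,y}$ and $\cS^\eps_x\subset\cS_{r,x}$. By \cref{rk:ballContained-omega}, $B_{\varrho_{\eps,\omega}}(x(y))\cap\Omega\subset\cT^\eps_y$. \Cref{rk:coneCondition}(a) then gives
\[
P(\cT_{r,y})\ge\delta_P\min(\varrho_{\eps,\omega},1)^d=\kappa_{\eps,\omega},
\]
and analogously $Q(\cS_{r,x})\ge q_\omega:=\inf_{y}Q(B_{\varrho_{\eps,\omega}}(y))$. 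We also note that $\kappa_{\eps,\omega}\le1$ and $\alpha_{\eps,\omega}\le q_\omega\le 1$, because $\lambda_P\le\Lambda_P$ and the ceiling is at least $1$.

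\emph{Step 2 (reduction to an eigenvalue).} Let $\lambda_0$ be as in \cref{lemma:self-adj}. If $\lambda_0\ge\beta_{\eps,\omega}$, we are done. Otherwise $\lambda_0<\frac14\kappa_{\eps,\omega}\alpha_{\eps,\omega}\le\frac14\min(\kappa_{\eps,\omega},q_\omega)$. This gives
\[
P(\cT_{r,y})-\lambda_0\ge\tfrac34P(\cT_{r,y}), \qquad Q(\cS_{r,x})-\lambda_0\ge\tfrac34 Q(\cS_{r,x}),
\]
both uniformly positive, so \eqref{eq:greater-lambda} holds and $\lambda_0$ is attained by an eigenvector $u\oplus v\neq0$.

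\emph{Step 3 (contradiction).} Center $u$ so that $\int u\,dP=0$. \Cref{lemma:PL-forVariance-omega} gives
\[
\lambda_0\ge\alpha_{\eps,\omega}\,\frac{\|u\|^2}{\|u\|^2+\|v\|^2}.
\]
The second eigen-equation \eqref{eq:-for-v-eigenvalue}, combined with Jensen's inequality and Step 1, gives $\|v\|^2_{L^2(Q)}\le 2\|u\|^2_{L^2(P)}/\kappa_{\eps,\omega}$. Together these yield $\lambda_0\ge\frac13\alpha_{\eps,\omega}\kappa_{\eps,\omega}\ge\beta_{\eps,\omega}$, a contradiction, which proves \eqref{eq:Mr-coercive-omega}. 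Finally, \eqref{eq:Bound-double-takeaway-omega} follows from \eqref{eq:phiSecondDeriv} exactly as in Step 4 of the original proof, applied with $w_*=(f_*-f)\oplus(g_*-g)$.

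The only point needing care is Step 1, which requires checking that \cref{rk:ballContained-omega} really yields the ball inclusion for $x(y)$ with the symmetric modulus estimate. This is the same computation as in \cref{lemma:PL-forVariance-omega}, using $2\omega(\varrho)+\omega(2\varrho)\le\eps/2$. Everything else transfers verbatim.
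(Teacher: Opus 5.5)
Your proposal is correct and takes the same route as the paper. The paper states that the proof of \cref{Theorem:Bound-double} carries over once \eqref{eq:Measure-ball}--\eqref{eq:Measure-ball2} are replaced by their $\varrho_{\eps,\omega}$-analogues (via \cref{rk:ballContained-omega} and \cref{rk:coneCondition}), noting that $\alpha_{\eps,\omega}\le\inf_{y\in\Omega'}Q(B_{\varrho_{\eps,\omega}}(y))$ so Step~2 is unchanged; your write-up makes explicit details it leaves implicit, such as $\kappa_{\eps,\omega},\alpha_{\eps,\omega}\le 1$ and the check that $2\omega(\varrho_{\eps,\omega})+\omega(2\varrho_{\eps,\omega})\le\eps/2$ yields the ball inclusions.
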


\begin{proof}
The proof of Theorem~\ref{Theorem:Bound-double} carries over once \eqref{eq:Measure-ball} and \eqref{eq:Measure-ball2} are replaced by
\[
    P(\cT_{r,y})\ge \delta_P\min(\varrho_{\eps,\omega},1)^d=\kappa_{\eps,\omega},
    \qquad
    Q(\cS_{r,x})\ge \inf_{y\in\Omega'}Q(B_{\varrho_{\eps,\omega}}(y)).
\]
These follow from Remark~\ref{rk:ballContained-omega} together with Remark~\ref{rk:coneCondition}. Note also that
\[
    \alpha_{\eps,\omega}
    \le \inf_{y\in\Omega'}Q(B_{\varrho_{\eps,\omega}}(y)),
\]
so Step~2 of the proof of Theorem~\ref{Theorem:Bound-double} is unchanged as well. Therefore the same spectral argument yields \eqref{eq:Mr-coercive-omega}, and \eqref{eq:Bound-double-takeaway-omega} then follows from \eqref{eq:phiSecondDeriv} exactly as before.
\end{proof}

\begin{corollary}[PL inequality for $\Phi$ with a modulus of continuity]\label{th:PL-Phi-omega}
Theorem~\ref{th:PL-Phi} remains valid under Assumption~\ref{assumption:cost-omega}, with $\gamma_\eps$ replaced throughout by $\gamma_{\eps,\omega}$.
\end{corollary}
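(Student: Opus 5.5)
The plan is to repeat the proof of \cref{th:PL-Phi} verbatim, using \cref{Theorem:Bound-double-omega} in place of \cref{Theorem:Bound-double}. That proof used the Lipschitz constant $L$ only through the coercivity estimate~\eqref{eq:Bound-double-takeaway}. Everything else is unchanged under \cref{assumption:cost-omega}: the expansion~\eqref{development-Phi}, the choice of $r_0$ in~\eqref{eq:def-C-r0}, and the convexity of $\phi$. The facts these need are strong duality, $\phi'(0)=0$ via~\eqref{eq:primal-dual}, and the formula~\eqref{eq:phiSecondDeriv}, and none of them involve regularity of $c$ beyond boundedness on $\Omega\times\Omega'$. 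This boundedness holds because $c$ is uniformly continuous on a compact set.

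Concretely, I fix $(f,g)\in L^\infty(\Omega)\times L^\infty(\Omega')$ and set $C_{f,g}$ and $r_0$ as before. Then \eqref{eq:Bound-double-takeaway-omega} holds for a.e.\ $r\le r_0$. Inserting it into~\eqref{development-Phi} gives
\[
\phi(r_0)\ge\phi(0)+\tfrac{r_0^2\beta_{\eps,\omega}}{2\eps}\|(f_*-f)\oplus(g_*-g)\|^2_{L^2(P\otimes Q)}.
\]
Convexity gives $\phi(r_0)-\phi(0)\le r_0\phi'(1)=r_0\langle {\rm D}\Phi(f\oplus g),f_*\oplus g_*-f\oplus g\rangle$. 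Cauchy--Schwarz then yields the analogue of~\eqref{eq:provedFirstClaim} with $\beta_{\eps,\omega}$ in place of $\beta_\eps$. Since $2\eps/r_0\le 4\max(C_{f,g},\eps)$, this gives the error bound with constant $4\beta_{\eps,\omega}^{-1}=\gamma_{\eps,\omega}$, matching~\eqref{eq:def-gamma-omega}. The PL inequality then follows from concavity of $\Phi$ exactly as before, by bounding $\QOT_\eps(P,Q)-\Phi(f\oplus g)$ through the gradient and the just-proved error bound.

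There is no genuine obstacle; the substantive work is in \cref{lemma:PL-forVariance-omega} and \cref{Theorem:Bound-double-omega}. The only point to check is that Step~2 of \cref{Theorem:Bound-double} still applies. That step needs $\kappa_{\eps,\omega}\le1$, which holds since $\delta_P\le 1$, and $\alpha_{\eps,\omega}\le \min(1,\inf_y Q(B_{\varrho_{\eps,\omega}}(y)))$, which is immediate from~\eqref{eq:def-alpha-omega}. Both were already noted in the proof of \cref{Theorem:Bound-double-omega}. By \cref{le:translationPhiToGamma}, the same replacement of $\gamma_\eps$ by $\gamma_{\eps,\omega}$ also carries over to \cref{th:PL-Gamma}.
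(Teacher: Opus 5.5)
Your proposal is correct and is the paper's proof: substitute \cref{Theorem:Bound-double-omega} for \cref{Theorem:Bound-double} in the proof of \cref{th:PL-Phi}, note that the cost regularity enters that proof only through $\beta$, and read off $\gamma_{\eps,\omega}=4\beta_{\eps,\omega}^{-1}$. Your extra checks are consistent with the paper: the boundedness of $c$ needed for the preliminaries, and $\kappa_{\eps,\omega}\le 1$ and $\alpha_{\eps,\omega}\le \inf_{y\in\Omega'}Q(B_{\varrho_{\eps,\omega}}(y))$ for Step~2, are the points it handles in \cref{Theorem:Bound-double-omega}.
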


\begin{proof}
Replace Theorem~\ref{Theorem:Bound-double} by Theorem~\ref{Theorem:Bound-double-omega} in the proof of Theorem~\ref{th:PL-Phi}. Since the proof depends on the cost regularity only through the constant $\beta_\eps$, the same argument yields the conclusion with $\beta_{\eps,\omega}$ in place of $\beta_\eps$, hence with $\gamma_{\eps,\omega}=4\beta_{\eps,\omega}^{-1}$ in place of $\gamma_\eps$.
\end{proof}

\begin{corollary}[PL inequality for $\Gamma$ with a modulus of continuity]\label{th:PL-Gamma-omega}
Assume Assumptions~\ref{assumption:marginals} and~\ref{assumption:cost-omega}. Then for every $(f,g)\in L^\infty(\Omega)\times L^\infty(\Omega')$,
\[
    \|f\oplus g-f_*\oplus g_*\|_{L^2(P\otimes Q)}
    \le \gamma_{\eps,\omega}\max\bigl(\|f\oplus g-f_*\oplus g_*\|_\infty,\eps\bigr)
    \|{\rm D}\Gamma(f,g)\|_{L^2(P)\times L^2(Q)},
\]
and
\[
    \|{\rm D}\Gamma(f,g)\|_{L^2(P)\times L^2(Q)}^2
    \ge
    \frac{\QOT_\eps(P,Q)-\Gamma(f,g)}{\gamma_{\eps,\omega}\max\bigl(\|f\oplus g-f_*\oplus g_*\|_\infty,\eps\bigr)}.
\]
\end{corollary}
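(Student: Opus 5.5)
This corollary follows from \cref{th:PL-Phi-omega} in exactly the way \cref{th:PL-Gamma} follows from \cref{th:PL-Phi}. The plan is to transfer the error bound and the PL inequality from the space $\Hil$ to $L^2(P)\times L^2(Q)$.

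First, fix $(f,g)\in L^\infty(\Omega)\times L^\infty(\Omega')$. By \cref{th:PL-Phi-omega}, which rests on \cref{Theorem:Bound-double-omega} under \cref{assumption:cost-omega}, we have
$$\|f\oplus g-f_*\oplus g_*\|_{L^2(P\otimes Q)}\le \gamma_{\eps,\omega}\max(\|f\oplus g-f_*\oplus g_*\|_\infty,\eps)\,\|{\rm D}\Phi(f\oplus g)\|_{L^2(P\otimes Q)}$$
together with the corresponding PL inequality for $\Phi$. Here $(f_*,g_*)$ are the versions fixed in \eqref{eq:potentials-omega}. Strong duality (\cref{pr:prelims}(i)) still holds under the modulus-of-continuity assumption, so $\QOT_\eps(P,Q)=\Phi(f_*\oplus g_*)$.

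Second, apply \cref{le:translationPhiToGamma}. Its proof is purely algebraic and uses neither the regularity of $c$ nor that of the marginals. It gives
$$\|{\rm D}\Phi(f\oplus g)\|_{L^2(P\otimes Q)}\le \|{\rm D}\Gamma(f,g)\|_{L^2(P)\times L^2(Q)}.$$

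Third, substitute this into both inequalities and use $\Phi(f\oplus g)=\Gamma(f,g)$. In the error bound, the right-hand side can only increase, so the bound carries over directly. In the PL inequality, the left-hand side $\|{\rm D}\Gamma\|^2$ dominates $\|{\rm D}\Phi\|^2$, which in turn dominates $\bigl(\QOT_\eps(P,Q)-\Gamma(f,g)\bigr)/\bigl(\gamma_{\eps,\omega}\max(\cdots)\bigr)$. This gives both claims.

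No step here is a real obstacle. The one point to check is that nothing in the chain from \cref{Theorem:Bound-double-omega} to \cref{th:PL-Phi-omega} quietly used the Lipschitz constant $L$. Three places could have depended on it, and none does:
\begin{itemize}
\item the choice of $r_0$ in \eqref{eq:def-C-r0}, which depends only on $C_{f,g}$ and $\eps$;
\item the formulas \eqref{eq:phiFirstDeriv}--\eqref{development-Phi}, which use only that the potentials are bounded and the first-order condition \eqref{eq:FOC};
\item the final algebra turning $2\eps/(r_0\beta_{\eps,\omega})$ into $\gamma_{\eps,\omega}\max(C_{f,g},\eps)$, which uses $\gamma_{\eps,\omega}=4\beta_{\eps,\omega}^{-1}$.
\end{itemize}
With this confirmed, the corollary follows.
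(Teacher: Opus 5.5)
Your proposal is correct and follows the paper's proof: combine \cref{th:PL-Phi-omega} with the unchanged \cref{le:translationPhiToGamma}, which gives $\|{\rm D}\Phi(f\oplus g)\|_{L^2(P\otimes Q)}\le\|{\rm D}\Gamma(f,g)\|_{L^2(P)\times L^2(Q)}$, and with $\Phi(f\oplus g)=\Gamma(f,g)$; both bounds then transfer exactly as in the proof of \cref{th:PL-Gamma}. Your check that the chain from \cref{Theorem:Bound-double-omega} to \cref{th:PL-Phi-omega} uses the cost regularity only through $\beta_{\eps,\omega}$, with $\gamma_{\eps,\omega}=4\beta_{\eps,\omega}^{-1}$, matches the paper's justification of \cref{th:PL-Phi-omega}.
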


\begin{proof}
Lemma~\ref{le:translationPhiToGamma} is unchanged. Therefore, Corollary~\ref{th:PL-Phi-omega} implies the stated bounds exactly as in the proof of Theorem~\ref{th:PL-Gamma}.
\end{proof}

\begin{remark}\label{rk:lipschitz-recovered}
If $\omega(r)=Lr$, the explicit radius from \cref{rk:omega-inverse} becomes
\[
    \varrho_{\eps,\omega}=\frac{\eps}{8L}\wedge R
\]
and \eqref{eq:def-gamma-omega} reduces to \eqref{eq:gamma-defn} (up to the truncation at $R$). That is, the results in the present appendix recover the results in the main text when specialized to a Lipschitz cost~$c$. 
\end{remark}

\proofreadhere

\section{Extension to connected Lipschitz supports}
\label{app:PL-connected}

This appendix shows how the convexity assumption on the support $\Omega$ of $P$ can be relaxed to
\begin{equation}\label{eq:Omega-Lipschitz-connected}
\Omega=\overline U,
\qquad
U\subset\R^d \text{ is a bounded connected Lipschitz domain.}
\end{equation}
Apart from this relaxation, we keep the rest of \cref{assumption:marginals}. Since $U$ is Lipschitz, $|\partial U|=0$, and we freely identify Lebesgue integrals over $U$ and over $\Omega$.

\begin{remark}[Lower bounds for ball measures]
\label{rk:ball-measure-quantitative}
There exists a constant $\delta_\Omega\in(0,1]$, depending only on $\Omega$, such that
\begin{equation}
\label{eq:quantitative-thickness}
|B_r(x)\cap \Omega|\ge \delta_\Omega r^d,
\qquad x\in\Omega,\quad 0<r\le \diam(\Omega) .
\end{equation}
Indeed, any bounded  Lipschitz domain satisfies a uniform interior cone condition; see \cite[Theorem~1.2.2]{Grisvard.85}. %
Consequently,
\begin{equation}
\label{eq:ball-measure-quantitative}
P(B_r(x))\ge \widetilde\delta_P\min(r^d,1),
\quad x\in\Omega,\quad r>0, \qquad\text{where }\widetilde\delta_P:=\min(1,\lambda_P\delta_\Omega).
\end{equation}
\end{remark}

The other geometric input needed in the proof is the following nonlocal Poincar\'e estimate, whose proof reduces locally to the convex case. It replaces \cref{Lemma:Bound-energy}.

\begin{lemma}
\label{Lemma:Bound-energy-connected}
There exists a constant $C_\Omega\ge 1$, depending only on $\Omega$, such that for every $\varrho>0$ and every $h\in L^2(\Omega)$,
\begin{equation}
\label{eq:Bound-energy-connected}
\int_\Omega\int_\Omega (h(x)-h(z))^2\,dz\,dx
\le
C_\Omega
\left\lceil \frac{\diam(\Omega)}{\varrho}\right\rceil^{d+2}
\mathcal E_\varrho(h),
\end{equation}
where
\[
\mathcal E_\varrho(h):=
\int_\Omega\int_{B_\varrho(x)\cap\Omega} (h(x)-h(z))^2\,dz\,dx.
\]
\end{lemma}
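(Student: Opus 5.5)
The plan is to reduce to the convex case by covering $\Omega$ with finitely many convex pieces and chaining them, exploiting connectedness. Since $U$ is a bounded Lipschitz domain, every boundary point has a neighbourhood in which $U$ is the region above a Lipschitz graph. Such a neighbourhood contains a small set that is star-shaped with respect to a ball. Combining these with interior balls and using compactness, we obtain finitely many open sets $V_1,\dots,V_N$ covering $\Omega$, with each $V_i\cap\Omega$ \emph{convex} or at least bi-Lipschitz equivalent to a convex set with controlled distortion. The simplest option is to cover $\Omega$ by sets $W_i:=V_i\cap\Omega$, each bi-Lipschitz to a cube via a flattening map $\Psi_i$. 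Because $U$ is connected, the adjacency graph of the cover, with $W_i\sim W_j$ whenever $|W_i\cap W_j|>0$, is connected. Its diameter is at most $N$, and it depends only on $\Omega$.

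\emph{Local step.} On each $W_i$ we prove the analogue of \cref{Lemma:Bound-energy}. We transport $h$ by $\Psi_i$ to a cube $K$, which is convex. There we apply the segment-chaining argument of \cref{Lemma:Bound-energy} with step scale $\varrho/\mathrm{Lip}(\Psi_i^{-1})$, and then transport back. The Jacobian bounds of $\Psi_i$ and the inclusion $\Psi_i^{-1}(B_{\varrho/\ell}(\Psi_i x))\subset B_\varrho(x)$ yield
$$\int_{W_i}\int_{W_i}(h(x)-h(z))^2\,dz\,dx\le C_i\lceil \diam(\Omega)/\varrho\rceil^{d+2}\mathcal E_\varrho(h),$$
where $C_i$ absorbs the distortion constants and the ratio of the ceilings, which is at most $(1+\ell)^{d+2}$.

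\emph{Gluing step.} Write $a_i$ for the mean of $h$ over $W_i$. Then
$$\int_{W_i}\int_{W_i}(h-h)^2=2|W_i|\int_{W_i}(h-a_i)^2.$$
For adjacent $i,j$ we bound $|a_i-a_j|^2$ by
$$|W_i\cap W_j|^{-1}\int_{W_i\cap W_j}\bigl(2(h-a_i)^2+2(h-a_j)^2\bigr),$$
which is controlled by the local energies. By chaining along a path of length at most $N$ in the adjacency graph and applying Cauchy--Schwarz, we obtain $|a_i-a_j|^2\le N\sum_k(\cdots)$ for all pairs $i,j$. Finally we expand
$$\int_\Omega\int_\Omega(h(x)-h(z))^2\le\sum_{i,j}\int_{W_i}\int_{W_j}3\bigl[(h(x)-a_i)^2+(a_i-a_j)^2+(a_j-h(z))^2\bigr].$$
Each term is bounded by the local estimates, with a constant depending only on $N$, $|W_i|$, $\min_{i\sim j}|W_i\cap W_j|$, $|\Omega|$ and the $C_i$. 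The constant satisfies $C_\Omega\ge1$, possibly after increasing it. The local energies $\mathcal E_\varrho$ restricted to $W_i$ are each at most the global $\mathcal E_\varrho(h)$. Summing over $i$ costs only a factor $N$.

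\emph{Main obstacle.} The main difficulty is the local step near the boundary. The Lipschitz flattening maps do not send segments to segments, so we must redo \cref{Lemma:Bound-energy} on the image cube. We must also check carefully that balls of radius $\varrho/\ell$ pull back into balls of radius $\varrho$ intersected with $\Omega$, uniformly in $\varrho$. For large $\varrho$ (say $\varrho\ge\diam(\Omega)$) the claim is trivial, since $\mathcal E_\varrho(h)$ equals the full double integral. So only $\varrho$ smaller than a fixed scale matters, and there the bi-Lipschitz distortion is harmless. An alternative I would try first is to choose the cover so that every $W_i$ is genuinely convex. Near a boundary point one can take the region between a Lipschitz graph and a flat top, but such a region need not be convex. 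So the bi-Lipschitz route seems unavoidable in general.
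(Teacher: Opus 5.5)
Your proposal is correct and follows essentially the same route as the paper: a finite atlas of pieces bi-Lipschitz to convex sets, \cref{Lemma:Bound-energy} applied in each chart at the radius rescaled by the Lipschitz constant, and gluing of the local means along the connected overlap graph. The differences are cosmetic: you obtain connectedness of the overlap graph directly from openness of the sets $V_i\cap U$ rather than by refining the atlas, and you assemble the global bound through a double sum over pairs of charts rather than through the deviation from a single mean $m_1$.
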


\begin{proof}
If $\varrho>\diam(\Omega)$, then $B_\varrho(x)\supset\Omega$ for every $x\in\Omega$. If $\varrho=\diam(\Omega)$, this remains true up to a $\cL_{2d}$-null subset of $\Omega\times\Omega$. Hence $\mathcal E_\varrho(h)$ equals the left-hand side of \cref{eq:Bound-energy-connected}, and there is nothing to prove. We assume below that $0<\varrho<\diam(\Omega)$.

Choose a finite collection of open sets $U_1,\dots,U_M$ covering $\Omega$ and set
\[
\Omega_i:=U_i\cap\Omega .
\]
We choose this Lipschitz atlas so that each $\Omega_i$ is, up to a null set, the image of a bounded convex reference set $V_i$ under a bi-Lipschitz map
\[
\Phi_i:V_i\to\Omega_i .
\]
Such an atlas follows from the standard finite Lipschitz atlas for $\overline U$ and compactness of $\overline U$. After a refinement, connectedness of $U$ allows us to choose the atlas so that the overlap graph, with vertices $1,\dots,M$ and edge relation
\[
i\sim j
\quad\Longleftrightarrow\quad
|\Omega_i\cap\Omega_j|>0,
\]
is connected. Let all constants denoted by $C_\Omega$ depend only on this fixed atlas, including the bi-Lipschitz constants, the number of charts, the multiplicity of the cover, and the overlap measures $|\Omega_i\cap\Omega_j|$ along the chosen connected graph. We allow $C_\Omega$ to change from line to line.

Fix $i$ and set $g_i:=h\circ\Phi_i$ on $V_i$. Since $V_i$ is convex, \cref{Lemma:Bound-energy} applied to $V_i$ at scale comparable to $\varrho$ gives
\[
\int_{V_i}\int_{V_i}(g_i(u)-g_i(v))^2\,dv\,du
\le
C_\Omega
\left\lceil \frac{\diam(\Omega)}{\varrho}\right\rceil^{d+2}
\int_{V_i}\int_{V_i\cap B_{\varrho/C_\Omega}(u)}
(g_i(u)-g_i(v))^2\,dv\,du .
\]
Using the identity
\[
\int_{V_i}\int_{V_i}(g_i(u)-g_i(v))^2\,dv\,du
=
2|V_i|\inf_{a\in\R}\int_{V_i}(g_i-a)^2\,du,
\]
we obtain
\[
\inf_{a\in\R}\int_{V_i}(g_i-a)^2\,du
\le
C_\Omega
\left\lceil \frac{\diam(\Omega)}{\varrho}\right\rceil^{d+2}
\int_{V_i}\int_{V_i\cap B_{\varrho/C_\Omega}(u)}
(g_i(u)-g_i(v))^2\,dv\,du .
\]
Changing variables $x=\Phi_i(u)$ and $z=\Phi_i(v)$, and using the bi-Lipschitz bounds, yields
\begin{equation}
\label{eq:local-chart-connected}
\inf_{a\in\R}\int_{\Omega_i}(h-a)^2\,dx
\le
C_\Omega
\left\lceil \frac{\diam(\Omega)}{\varrho}\right\rceil^{d+2}
\int_{\Omega_i}\int_{\Omega_i\cap B_\varrho(x)}
(h(x)-h(z))^2\,dz\,dx .
\end{equation}

Define the local means and local oscillations
\[
m_i:=\frac1{|\Omega_i|}\int_{\Omega_i}h(x)\,dx,
\qquad
E_i:=\int_{\Omega_i}(h-m_i)^2\,dx .
\]
By \cref{eq:local-chart-connected},
\begin{equation}
\label{eq:Ei-bound-connected}
E_i
\le
C_\Omega
\left\lceil \frac{\diam(\Omega)}{\varrho}\right\rceil^{d+2}
\int_{\Omega_i}\int_{\Omega_i\cap B_\varrho(x)}
(h(x)-h(z))^2\,dz\,dx .
\end{equation}
If $i\sim j$, then Jensen's inequality gives
\[
|m_i-m_j|^2
\le
\frac{2}{|\Omega_i\cap\Omega_j|}
\int_{\Omega_i\cap\Omega_j}\bigl(|h-m_i|^2+|h-m_j|^2\bigr)\,dx
\le
C_\Omega(E_i+E_j).
\]
Since the overlap graph is finite and connected,
\[
|m_i-m_1|^2\le C_\Omega\sum_{j=1}^M E_j,
\qquad i=1,\dots,M.
\]
Therefore,
\[
\int_\Omega (h-m_1)^2\,dx
\le
\sum_{i=1}^M\int_{\Omega_i}(h-m_1)^2\,dx
\le
C_\Omega\sum_{i=1}^M E_i .
\]
Combining this with \cref{eq:Ei-bound-connected} and using the finite multiplicity of the atlas,
\[
\inf_{a\in\R}\int_\Omega (h-a)^2\,dx
\le
C_\Omega
\left\lceil \frac{\diam(\Omega)}{\varrho}\right\rceil^{d+2}
\mathcal E_\varrho(h).
\]
Finally,
\[
\int_\Omega\int_\Omega (h(x)-h(z))^2\,dz\,dx
=
2|\Omega|\inf_{a\in\R}\int_\Omega (h-a)^2\,dx,
\]
and the factor $2|\Omega|$ is absorbed into $C_\Omega$.
\end{proof}

We can now repeat the proof of \cref{lemma:PL-forVariance} using \cref{Lemma:Bound-energy-connected} instead of \cref{Lemma:Bound-energy}. This gives the same assertion as in \cref{lemma:PL-forVariance} except that its constant $\alpha$ is replaced by
\begin{equation}
\label{eq:def-alpha-connected}
\widetilde\alpha_\eps
:=
\frac{\lambda_P^2}{\Lambda_P^2}
\frac{q_0}{C_\Omega
\left\lceil \frac{8L\,\diam(\Omega)}{\eps}\right\rceil^{d+2}},
\qquad
q_0:=\inf_{y\in\Omega'}Q\Bigl(B_{\frac{\eps}{8L}}(y)\Bigr)>0.
\end{equation}

The proof of \cref{Theorem:Bound-double} then carries over with modified constants. Specifically, the assertion now holds with $\beta_\eps$ replaced by 
\[
\widetilde\beta_\eps
:=
\frac14 \widetilde\kappa_\eps \widetilde\alpha_\eps,
\qquad
\widetilde\kappa_\eps
:=
\widetilde\delta_P\min\!\left(\frac{\eps}{8L},1\right)^d,
\]
with $\widetilde\delta_P$ from \cref{eq:ball-measure-quantitative} and
$\widetilde\alpha_\eps$ from \cref{eq:def-alpha-connected}.
Finally, we obtain the corresponding PL inequality and error bound.

\begin{theorem}[PL inequality on connected Lipschitz supports]
\label{th:PL-Gamma-connected}
Assume \cref{assumption:marginals}, except that convexity of $\Omega$ is replaced by
\[
\Omega=\overline U
\qquad\text{for a bounded connected Lipschitz domain }U\subset\R^d,
\]
and \cref{assumption:cost}. Then the assertions of \cref{th:PL-Phi,th:PL-Gamma} remain valid with
$\gamma_\eps$ replaced by
\begin{equation}
\label{eq:gamma-connected}
\widetilde\gamma_\eps
=
16\left(\widetilde\delta_P^{-1}\max\!\left(\frac{8L}{\eps},1\right)^d\right)
\frac{\Lambda_P^2}{\lambda_P^2}
\frac{
C_\Omega
\left\lceil \frac{8L\,\diam(\Omega)}{\eps}\right\rceil^{d+2}
}{q_0},
\end{equation}
where $\widetilde\delta_P$ is defined in \cref{eq:ball-measure-quantitative} and
$q_0=\inf_{y\in\Omega'}Q(B_{\eps/(8L)}(y))$.
\end{theorem}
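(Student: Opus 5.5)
The plan is to rerun the chain of arguments from \cref{Section-Proof-of-PL} and track exactly where convexity of $\Omega$ entered. Only two geometric inputs used it: the ball-measure lower bound of \cref{rk:coneCondition}(a), and \cref{Lemma:Bound-energy}, which relied on $T(\Omega\times\Omega)\subset\Omega\times\Omega$. Everything else is insensitive to the shape of $\Omega$. This includes \cref{pr:prelims} (which only needs compactness and connectedness), the expansion \eqref{development-Phi}, the choice of $r_0$, \cref{le:M-explicit}, \cref{lemma:self-adj}, \cref{le:translationPhiToGamma}, and the final algebra in the proofs of \cref{th:PL-Phi,th:PL-Gamma}. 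I will therefore substitute \cref{rk:ball-measure-quantitative} and \cref{Lemma:Bound-energy-connected} for these two inputs and propagate the new constants.

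\emph{Step 1 (variance coercivity).} I repeat the proof of \cref{lemma:PL-forVariance} verbatim. The passage to \eqref{eq:proof_for_variance_step1}, the Lipschitz inclusion \eqref{eq:forIndicatorRel} with $\varrho=\eps/(8L)$, and the bound by $\tfrac12 q_0\lambda_P^2\mathcal E_\varrho(u)$ use no geometry of $\Omega$. The only change is that \cref{Lemma:Bound-energy-connected} now yields an extra factor $C_\Omega$ when I lower bound $\mathcal E_\varrho(u)$. The comparison $\int\!\int(u(x)-u(z))^2dxdz\ge 2\Var_P(u)/\Lambda_P^2$ is unchanged. This gives the coercivity with $\widetilde\alpha_\eps$ from \eqref{eq:def-alpha-connected}.

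\emph{Step 2 (uniform coercivity).} In Step~1 of \cref{Theorem:Bound-double}, \cref{rk:ballContained} still gives $B_{\eps/(8L)}(x(y))\cap\Omega\subset\cT_{r,y}$, since that argument uses only Lipschitz continuity. Then \eqref{eq:ball-measure-quantitative} gives $P(\cT_{r,y})\ge\widetilde\kappa_\eps$. The bound $Q(\cS_{r,x})\ge q_0$ is unaffected.

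For Step~2 of that proof I need $\widetilde\kappa_\eps\le1$ and $\widetilde\alpha_\eps\le\min(1,q_0)$. The first holds because $\widetilde\delta_P\le1$. The second holds because $C_\Omega\ge1$, $\lambda_P\le\Lambda_P$ and $q_0\le1$. With these, \eqref{eq:greater-lambda} holds whenever $\lambda_0\le\widetilde\beta_\eps$, so \cref{lemma:self-adj} applies. The eigenvector contradiction argument then goes through unchanged, giving $\lambda_0\ge\tfrac13\widetilde\alpha_\eps\widetilde\kappa_\eps\ge\widetilde\beta_\eps$. Consequently, \eqref{eq:Bound-double-takeaway} holds with $\widetilde\beta_\eps$.

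\emph{Step 3 (conclusion).} The proof of \cref{th:PL-Phi} depends on the geometry only through $\beta_\eps$. It produces the factor $\tfrac{2\eps}{r_0}\widetilde\beta_\eps^{-1}\le 4\max(C_{f,g},\eps)\cdot 4\widetilde\kappa_\eps^{-1}\widetilde\alpha_\eps^{-1}$. Expanding, with $\widetilde\kappa_\eps^{-1}=\widetilde\delta_P^{-1}\max(8L/\eps,1)^d$, gives exactly $\widetilde\gamma_\eps$ in \eqref{eq:gamma-connected}. \Cref{le:translationPhiToGamma} then transfers the result to $\Gamma$ as before.

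I expect the main obstacle to be \cref{Lemma:Bound-energy-connected} itself, specifically the chart step. Applying \cref{Lemma:Bound-energy} on a convex $V_i$ at scale $\varrho/C_\Omega$ inflates the ceiling factor by $C_\Omega^{d+2}$, which must be absorbed. One must also check that $\Phi_i$ maps $B_{\varrho/C}$-neighbourhoods in $V_i$ into $B_\varrho$-neighbourhoods in $\Omega_i$. The overlap-graph chaining is also delicate. Since that lemma is stated earlier, I may assume it, and the remaining work is bookkeeping of constants.
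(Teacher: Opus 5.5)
Your proposal is correct and follows the paper's own route. You substitute \cref{rk:ball-measure-quantitative} and \cref{Lemma:Bound-energy-connected} for the two convexity-dependent inputs, rerun \cref{lemma:PL-forVariance} and \cref{Theorem:Bound-double} with $\widetilde\alpha_\eps,\widetilde\kappa_\eps,\widetilde\beta_\eps$, and conclude exactly as in \cref{th:PL-Phi,th:PL-Gamma}; your check that $\widetilde\kappa_\eps\le 1$ and $\widetilde\alpha_\eps\le\min(1,q_0)$, which keeps Step~2 of \cref{Theorem:Bound-double} and hence \eqref{eq:greater-lambda} valid, is a detail the paper leaves implicit.
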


\begin{corollary}[Linear convergence]
If $\gamma_\eps$ is replaced by $\widetilde\gamma_\eps$ of \eqref{eq:gamma-connected}, all assertions of \cref{coro:GD,coro:Implicit,coro:Explicit} carry over to the setting of \cref{th:PL-Gamma-connected}.
\end{corollary}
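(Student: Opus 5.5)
The statement is the final corollary: linear convergence of the three algorithms when $\Omega$ is only the closure of a connected Lipschitz domain. The plan is to observe that the proofs of \cref{coro:GD,coro:Implicit,coro:Explicit} in \cref{se:proofs-of-linear} use the geometry of $\Omega$ \emph{only} through the PL inequality and the error bound of \cref{th:PL-Gamma}. Once those are available with $\widetilde\gamma_\eps$ in place of $\gamma_\eps$, by \cref{th:PL-Gamma-connected}, the same arguments go through verbatim.

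\emph{Step 1: the $\Omega$-independent ingredients.} I will check, ingredient by ingredient, that nothing else in those proofs depends on convexity of $\Omega$.

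First, the Lipschitz bounds of \cref{le:lipschitzgradient}. They rely only on $|(t)_+-(s)_+|\le|t-s|$ and Jensen's inequality, so no geometry is involved.

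Second, the descent-lemma inequalities \eqref{eq:GD-gap-step}, \eqref{eq:beforePLapplication}, and the coordinate gradient analogue. These follow from smoothness alone.

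Third, the iterate bounds \eqref{eq:GDIterateBound}, \eqref{eq:implicitIteratesBound} and \eqref{eq:ExplicitIterateBound}. These use the first-order conditions \eqref{eq:FOC}, the positivity $Q(\cS_x)>0$, $Q(\cS^n_x)>0$ (and the symmetric statements for $P$), and the inequality $(a)_+-(b)_+\le (a-b)\1_{a\ge0}$.

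The first-order conditions and the existence of Lipschitz potentials come from \cref{pr:prelims}, which was stated to require only compactness of $\Omega,\Omega'$ and connectedness of $\Omega$. Connectedness holds here because $U$ is connected. The cited \cite[Lemma~5.2]{GonzalezSanzNutzRiveros.25} for gradient ascent likewise needs only boundedness of $c$ and these first-order conditions. \Cref{rk:sumNorm} is purely algebraic.

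\emph{Step 2: substitute the new constant.} In each proof, I will invoke \cref{th:PL-Gamma-connected} wherever \eqref{eq:PLforGamma} or the error bound was used. The uniform bound $M$ on $\max(\|f_n\oplus g_n-f_*\oplus g_*\|_\infty,\eps)$ from Step~1 is unchanged. The recursion $\Delta_{n+1}\le(1-q)\Delta_n$ then follows with $q$ defined as before but with $\widetilde\gamma_\eps$. The $L^2(P\otimes Q)$ iterate bounds follow from the error bound combined with $\eta(1-\eta/\eps)\|{\rm D}\Gamma\|^2\le\Delta_n$ (or its analogues), exactly as in the proof of \cref{coro:GD}.

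\emph{Main obstacle.} The only genuine content lies in \cref{th:PL-Gamma-connected} itself, which we may assume. For the corollary, the delicate point is making sure no hidden use of convexity enters the iterate bounds. Concretely, I need well-posedness and boundedness of the coordinate-ascent argmax under \eqref{eq:implicitalgo-f}--\eqref{eq:implicitalgo-g}, which the paper attributes to \cite[Lemmas 2.4, 2.5]{Nutz.2024} under compactness alone. I also need $q\in(0,1)$, which holds since $\widetilde\gamma_\eps\ge\gamma$-type bounds exceed $1$ (as $C_\Omega\ge1$ and $\widetilde\delta_P\le1$), so the contraction factors remain in $(0,1)$. With these checked, the corollary follows.
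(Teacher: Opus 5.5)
Your proposal is correct and matches the paper's implicit argument. The proofs in \cref{se:proofs-of-linear} use the geometry of $\Omega$ only through the PL inequality and the error bound. By contrast, \cref{le:lipschitzgradient}, the descent inequalities and the iterate bounds rely only on \eqref{eq:FOC} and \cref{pr:prelims}, which need just compactness of the supports and connectedness of $\Omega$. Substituting \cref{th:PL-Gamma-connected} with $\widetilde\gamma_\eps$ therefore yields the claim.

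Your extra check that $q<1$ is harmless but not needed. Since $\Delta_{n+1}\geq 0$, the recursion $\Delta_{n+1}\le(1-q)\Delta_n$ already forces $q\le 1$ whenever $\Delta_n>0$.
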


\bibliographystyle{abbrv}
\bibliography{Biblio}
\end{document}